\newtheorem{theorem}{Theorem}
\newtheorem{lemma}[theorem]{Lemma}
\newtheorem{corollary}[theorem]{Corollary}
\newtheorem{definition}[theorem]{Definition}
\theoremstyle{remark}
\newtheorem{remark}[theorem]{Remark}
\renewcommand{\theenumi}{{\rm \roman{enumi}}}
\def\Rl{{\mathbb{R}}}
\def\Cx{{\mathbb{C}}}
\def\uC{{\mathfrak{C}}}
\def\aM{{\mathcal{M}}}
\def\aN{{\mathcal{N}}}
\def\Comp{{\mathcal{K}}}
\def\sH{{\mathcal{H}}}
\def\Fred{{\mathcal{F}}}
\def\dom{{\mathrm{dom}}}
\def\Bd{{{B}}}
\def\cC{{\mathcal{L}}}
\def\Ran{{\mathrm{Ran}}}
\def\sE{{\mathcal{E}}}
\def\sL{{\mathcal{L}}}
\def\sEcross{{\sE^{\times}}}
\def\supp{{\mathrm{supp}\,}}
\def\Comp{{\mathcal{K}}}
\begin{document}

\title[Spectral flow as integral of one forms]{Spectral flow is the
  integral of one forms on \\ the Banach manifold of self adjoint
  Fredholm operators}

\author[A.~Carey]{Alan Carey}
\email{acarey@maths.anu.edu.au}
\address{Mathematical Sciences Institute, Australian National
University, Canberra ACT 0200 Australia}

\author[D.~Potapov]{Denis Potapov}
\email{d.potapov@unsw.edu.au}

\address{School of Mathematics and Statistics,
  University of New South Wales, Sydney, NSW  2052 , Australia}

\author[F.~Sukochev]{Fedor Sukochev}
\email{f.sukochev@unsw.edu.au}
\address{School of Mathematics and Statistics,
  University of New South Wales, Sydney, NSW  2052 , Australia}

\maketitle


\bibliographystyle{abbrv}

\noindent{\bf Abstract}.  
One may trace the idea that spectral flow should be given as the
 integral of a one form back to the 1974 Vancouver ICM address of I.M.
 Singer. Our main theorem gives analytic formulae for the spectral
 flow along a norm differentiable path of self-adjoint bounded
 Breuer-Fredholm operators in a semi-finite von Neumann algebra. These
 formulae have a geometric interpretation which derives from the
 proof. Namely we define a family of Banach submanifolds of all
 bounded self-adjoint Breuer-Fredholm operators and on each
 submanifold define global one forms whose integral on a norm
 differentiable path contained in the submanifold calculates the
 spectral flow along this path. We emphasise that our methods do not
 give a single globally defined one form on the self adjoint Breuer-
 Fredholms whose integral along all paths is spectral flow rather, as
 the choice of the plural `forms' in the title suggests, we need a
 family of such one forms in order to confirm Singer's idea. The
 original context for this result concerned paths of unbounded
 self-adjoint Fredholm operators. We therefore prove analogous
 formulae for spectral flow in the unbounded case as well.  The proof
 is a synthesis of key contributions by previous authors, whom we
 acknowledge in detail in the introduction, combined with an
 additional important recent advance in the differential calculus of
 functions of non-commuting operators.

\section{Introduction}
The notion of spectral flow has been a useful tool in geometry ever
since its invention by Lusztig and its application by
Atiyah-Patodi-Singer \cite{APS1,APS3}.  Until about a decade ago
spectral flow was considered primarily in topological terms as an
intersection number and there seemed to be no analytic viewpoint.  This
was despite the observation of I.M. Singer \cite{Si} that the eta
invariant is the integral of a one form and so, from the variation of
eta formula in \cite{APS1,APS3}, by implication one is led to ask
whether spectral flow is expressible as the integral of a one-form.  In
this paper we provide an answer to this question.

There has been a succession of contributions leading to our resolution
of Singer's question. We mention initial progress on an analytic
approach to spectral flow in \cite{DHK, H, Kam, G}.  Then in \cite{Ph,
  Phillips-SF2-1997} an analytic definition of spectral flow was given.
This definition applied equally to type $II$ von Neumann algebras where
operators with continuous spectrum may arise (an issue initially raised
in \cite{P1,P2}).  A key step in synthesising these developments was
taken in \cite{CarPhi1998-MR1638603, CarPhi2004-MR2053481} which exploit
an essential contribution of Getzler \cite{G} to produce spectral flow
formulae as integrals of one-forms on affine subspaces of the Banach
manifold of self adjoint Fredholms. Noncommutative geometry plays a key
role in all three of these papers in that theta and finitely summable
spectral triples are utilised. The significant new ingredient in
\cite{CarPhi1998-MR1638603, CarPhi2004-MR2053481} was the introduction
of general analytic methods which demonstrated that these analytic
spectral flow formulae apply equally to the standard type I situation
envisaged by Singer and also to spectral flow along paths of self
adjoint Breuer-Fredholm operators in a type $II_\infty$ von Neumann
algebra.  This development was partly motivated by ideas of Mathai on
$L^2$ spectral invariants for manifolds whose fundamental group has a
non-type $I$ regular representation, see \cite{M}.  For the benefit of
readers unfamiliar with the terminology above we will summarise the
relevant definitions in later Sections.  Readers interested in more
details should see the review in \cite{BCPRSW}, while readers unfamiliar
with the Breuer-Fredholm theory may consult \cite{BreuerII, CPRS3}.

A decisive further development occurred in \cite{ACS}
(see also \cite{ACDS}). The functional
calculus methods of \cite{CarPhi1998-MR1638603, CarPhi2004-MR2053481}
simply do not generalise sufficiently to answer Singer's question. A
more sophisticated functional calculus is needed and this was provided
partly in \cite{ACS} where it is explained how double operator integrals
(DOI) give a differential calculus for functions of operators.  It is
the key technical tool we exploit in the present paper and in order to
make the discussion more self contained we develop, in Section 5, the
relevant parts of this DOI technique.  A second innovation, which
occurred in \cite{Wahl}, was a new way to handle paths of unbounded self
adjoint Fredholm operators.  In \cite{Wahl} a spectral flow formula for
paths that lie in an affine space of relatively bounded perturbations of
a fixed unbounded self adjoint Fredholm operator was proved.  This
inspired the present work whose principal aim is to give a very general
answer to Singer's question in the case of the Banach manifold of
bounded self adjoint Fredholm operators and then to deduce a
generalisation of the unbounded results of \cite{Wahl} from our bounded
formula.  We emphasise that the methods are sufficiently strong to
answer Singer's question in a general semifinite von Neumann algebra.

To illustrate our ideas we now summarise a special case of our results.
Suppose~$\aM$ is a semi-finite von Neumann algebra with a normal
semifinite faithful (n.s.f.) trace $\tau$ which will be fixed
throughout.  We take the $\tau$-Calkin algebra to be the quotient of
$\mathcal M$ by the norm closed ideal generated by the $\tau-$finite
projections.  An operator is $\tau$-Fredholm (and hence Breuer-Fredholm)
if it is invertible in the $\tau-$Calkin algebra. Suppose that $t
\mapsto F_t \in \aM$ is a piecewise $C^1$-path of self adjoint
$\tau$-Fredholm operators such that~$\left\| F_t \right\| \leq 1$ and
the spectrum of the image of~$F_t$ in the $\tau-$Calkin algebra
is~$\left\{ \pm 1 \right\}$.
If the endpoints of this path,~$F_0$ and~$F_1$, are unitarily equivalent, then the spectral flow,
$sf(F_t)$, may be computed by the following analytic formula
\begin{equation}
  \label{BasicSFformula}
  sf (F_t) = \int_0^1
  \tau \left( \dot F_t \, h(F_t) \right)\, dt, 
\end{equation}
where~$h$ is a positive sufficiently smooth function
on $[-1,1]$.
The choice of $h$ is dictated by the requirement
that the RHS of~(\ref{BasicSFformula}) 
is well defined, namely that
\begin{equation}
  \label{BasicAssumption}
  \int_0^1 \left\| \dot F_t \, h(F_t) \right\|_1 \, dt < + \infty,
\end{equation}
where~$\left\| \cdot \right\|_1$ is the trace norm on~$\aM$.

In some special cases, formula~(\ref{BasicSFformula}) has been proved by
different methods in~\cite{Wahl, ACS, CarPhi1998-MR1638603,
  CarPhi2004-MR2053481} under significant additional restrictions on the
path~$\{F_t\}$.  A feature of the methods we employ in this paper
is that  we are able to remove the 
assumption of~\cite{ACS, CarPhi2004-MR2053481, CarPhi1998-MR1638603},
that Fredholm paths~$\{F_t\}$ must lie in the affine space of 
$\tau$-compact
perturbations of a fixed Fredholm operator $F_0$.  This affine space is
contractible so that the spectral flow of any loop in the space is zero
and hence these affine space formulae do not directly reveal the rich topology of the
space of Breuer-Fredholm operators. Every such affine space lies entirely within one
of the submanifolds described in the abstract and one may recover the `global' formula of the affine space studied in
\cite{ACS, CarPhi2004-MR2053481, CarPhi1998-MR1638603}
by an approximation argument (although we give details only in 
the unbounded case).
We emphasise that the
consequences of the spectral flow formulae
in ~\cite{CarPhi2004-MR2053481, CarPhi1998-MR1638603}  for affine spaces are quite
profound: they imply for example the local index formula in
noncommutative geometry in semifinite spectral triples \cite{CPRS2,CM}.  

In
the present paper we shall show that a modification of the approach
of~\cite{ACS} allows us to prove~(\ref{BasicSFformula}) under only the
requirement~(\ref{BasicAssumption}).
The formula~(\ref{BasicSFformula}) is a special case of a much more
general formula which we prove in Section 3 of this paper.  In Section 3
we give expressions for spectral flow along norm differentiable paths
in the Banach manifold of bounded self adjoint $\tau$-Fredholms in a
semifinite von Neumann algebra. The assumptions under which these formulae
hold are the minimal ones: there
are no unnecessary side conditions.  As
Singer's question was originally phrased in the case of spectral flow
along paths of unbounded self adjoint operators, we deduce, in Section
4, unbounded formulae from our bounded one. Namely we prove that, for a
pair $D_0$, $D_1$ of unbounded self adjoint $\tau$-Fredholms, spectral
flow along any path joining them that is smooth in the graph norm of
$D_0$ is the integral of a one form defined on the affine space of
$D_0$-graph norm bounded self adjoint perturbations of $D_0$.  This is a
strengthening of all previous results (in particular, those
in~\cite{ACS, Wahl, CarPhi1998-MR1638603, CarPhi2004-MR2053481}).

We now summarise the geometric meaning of (\ref{BasicSFformula}).  In
Section 3 we give more details.  The space $\Fred_*^{\pm 1}$ of all
self adjoint $\tau$-Fredholms with norm less than or equal to one and
with essential spectrum $\pm 1$ plays a special role in the theory as we
will see later.  (In the Appendix we show that the well known lemma of
\cite{AS} that the space $\Fred_*^{\pm 1}$ is a deformation retract of
the space of self adjoint Fredholms with both positive and negative
essential spectrum still holds in a semifinite von Neumann algebra.) 
As $\Fred_*^{\pm 1}$ itself does not appear to be a manifold we need to take care in interpreting our construction. 
We start with an auxiliary bigger class namely all self-adjoint $\tau$-Fredholm operators
with no essential spectrum in the interval~$[-\delta, 
\delta]$ for some~$\delta > 0$. As will be seen later, this
class, denoted  $\Fred_\delta$, is an open subset of the self-adjoint part of the algebra~$\aM$
and therefore, clearly, is a Banach manifold.  
Any norm continuous path $t\to F_t,$ $t\in [0,1]$
in the self adjoint $\tau$-Fredholm operators
lies in a $\Fred_\delta$ for some $\delta>0$. Then the integrand of
(\ref{BasicSFformula}) is a one form in the following sense. The
integrand comes from the functional $\theta_F$ defined on the tangent
space to the manifold of self adjoint $\tau$-Fredholms at $F\in\Fred_\delta$ by
$\theta_F(X)=\tau (Xh(F))$ for a suitably chosen $C^1$ function~$h$
with support in $[-\delta,\delta]$. We will see that this functional gives an exact one form
on sufficiently small convex neighbourhoods of $F$. (Note that this
geometric viewpoint can be traced back to \cite{G}).  Thus
(\ref{BasicSFformula}) is to be interpreted as the integral of a one form on a path in 
$\Fred_\delta$ for $\delta<1$. 
Note however that, in our approach, there is no global one form
on the space of all $\tau$-Fredholm operators that calculates
spectral flow.  It is necessary to vary $\delta$ and hence the function $h$
depending on the path in question. Our
general formula (which we do not  state in this introduction as it requires much more
notation than (\ref{BasicSFformula}))
applies when the endpoints are not unitarily equivalent.

For suitable paths, and hence functions $h_\delta$, we may take $\delta\to1$ and in this way we recover the unbounded affine space formulae of \cite{CarPhi2004-MR2053481, CarPhi1998-MR1638603}. However the geometric interpretation
that the resulting formulae are integrals of  one forms on the affine space has to be reproved ab initio (and we do not do this here cf.  \cite{CarPhi2004-MR2053481, CarPhi1998-MR1638603}). 
Also, for the unbounded case in Section 4, we first remark that an unbounded
self adjoint operator $D$ is $\tau$-Fredholm if the operator
$F_D=D(1+D^2)^{-1/2}$ is a $\tau$-Fredholm operator in $\mathcal M$.
The issue of the differentiability of the
map $D\mapsto F_D$ as a function on the unbounded $\tau$-Fredholms has
proved in the past to be the principal obstacle to proving spectral flow
formulae for the unbounded case using formulae for the bounded case (see
for example the discussion in \cite{Wahl}). One of the novelties of our
approach in this paper is a very satisfactory resolution of this
differentiability question described in Section 6.  This is used in
Section 4 to obtain a straightforward proof of the unbounded formula.
As in \cite{G} the motivation for our approach comes from questions in
noncommutative geometry.  In the next Section we will explain one
relationship of our results to the latter formalism.  This enables one
to understand spectral flow as a pairing of K-homology with K-theory.

The remainder of the paper is organised as follows. We prove the most general formula for 
spectral flow along paths of bounded self adjoint $\tau$-Fredholm operators in Section 3. In Section 4 we deduce 
from the bounded formula a corresponding formula for paths of unbounded self adjoint $\tau$-Fredholms. We present the proofs in as direct a fashion as possible deferring technical issues
on double operator integrals to Section 5 and background on graph norm bounded paths of unbounded operators to Section 6. We present a reasonably detailed discussion in Sections 5 and 6 to make this paper more self contained and independent of previous papers on these two topics.

{\bf Acknowledgements}. This research was supported by the Australian Research Council and the Hausdorff Institute for Mathematics. We also thank the referee and
Matthias Lesch for valuable comments on an earlier draft.

\section{Perturbations of spectral triples}

To explain how the calculation of spectral flow presented in the
following Sections fits into the overall picture in noncommutative
geometry \cite{C} we describe some preliminary results in this Section.

A semifinite spectral triple consists of an unbounded self adjoint
operator $D$ on a Hilbert space $\mathcal H$, a 
unital $*$-subalgebra $\mathcal
A$ of a semifinite von Neumann algebra $\mathcal M$ (with faithful
normal semifinite trace $\tau$) acting on $\mathcal H$ such that the
commutator $[D,a]$ extends to a bounded linear operator on $\mathcal H$
for all $a\in \mathcal A$ and with $D$ having $\tau$-compact resolvent
in $\mathcal M$.

In previous work spectral flow between operators in the affine space of
bounded self adjoint perturbations of $D$ was studied in the context of spectral triples
and a formula for
spectral flow proved that provides a first step in the resolution of
Singer's question.  In \cite{CarPhi1998-MR1638603} it is observed that
if $A$ is bounded then $$
F_D-F_{D+A}:=D(1+D^2)^{-1/2}-(D+A)(1+(D+A)^2)^{-1/2} $$ is 
$\tau$-compact. This observation is crucial to the method of proof of the spectral
flow formulae in \cite{CarPhi1998-MR1638603,CarPhi2004-MR2053481},
namely, one deduces the unbounded formula from a formula for spectral
flow in the affine space of $\tau$-compact perturbations of a fixed bounded
$\tau$-Fredholm operator $F$.  In \cite{CarPhi1998-MR1638603} it was
observed that if $A$ is an unbounded self adjoint operator affiliated to
$\mathcal M$ that is bounded in the graph norm of a fixed unbounded self
adjoint operator $D$ (that is, $\dom ({D}) \subseteq \dom (A) $ and
$||Av||\leq c(||v||+ ||Dv||)$ for some $c>0$ and all $v\in$ Dom$(A)$)
affiliated to $\mathcal M$ then $F_D-F_{D+A}$ is bounded but is not
$\tau$-compact in $\mathcal M$.  Thus to prove a spectral flow formula for
spectral flow between $D$ and a graph norm bounded perturbation using
the strategy of \cite{CarPhi1998-MR1638603} requires us to prove a
formula for spectral flow for general paths of bounded self adjoint Fredholm
operators.

The noncommutative geometry framework in the bounded case is that of
semifinite pre-Fredholm modules (a special case of Kasparov modules \cite{C}).
For our purposes we will only need the following definition.  With
$\mathcal A$ and $\mathcal M$ as above a semifinite pre-Fredholm module
is given by a self adjoint operator $F$ in $\mathcal M$ such that
$1-F^2$ is $\tau$-compact and $[F,a]$ is $\tau$-compact for all $a\in
\mathcal A$.  (If $F^2=1$ then the prefix `pre' is dropped.)

\begin{lemma}
  \begin{enumerate}
  \item Any semifinite spectral triple for $\mathcal A$ defines a
    semifinite pre-Fredholm module where we choose $F$ to be
    $F_D=D(1+D^2)^{-1/2}$. \label{PSTmodule}

  \item If $A$ is a self adjoint unbounded operator such that the $D$-graph
    norm of~$A$ is less than~$1$ and $[A,a]$ is bounded for all $a\in
    \mathcal A$ then $D+A$ also defines a spectral triple for $\mathcal
    A$. The semifinite pre-Fredholm module for $\mathcal A$ given by
    $F_{D+A}$ is homotopic to that given by $F_D$ with the homotopy
    defined by the path $\{F_{D+tA}$, $t\in [0,1]\}$.
    \label{PSThomotopy}
  \end{enumerate}
\end{lemma}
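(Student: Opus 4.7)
The plan is to dispose of (i) by a direct integral-representation calculation and then to reduce (ii) to (i) together with the operator-norm continuity of $D\mapsto F_D$ that is announced for Section~6.

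For (i), I would first record the algebraic identity $\id - F_D^2 = (1+D^2)^{-1} = (D+i)^{-1}(D-i)^{-1}$. The hypothesis that $D$ has $\tau$-compact resolvent places each factor in $\Comp$, so $\id - F_D^2\in\Comp$. For the commutator $[F_D,a]$ I would use the standard integral representation
\begin{equation*}
F_D \;=\; \frac{2}{\pi}\int_0^\infty D\,(1+s^2+D^2)^{-1}\,ds
\end{equation*}
and write $R_s := (1+s^2+D^2)^{-1}$. The Leibniz rule, together with the resolvent identity $[R_s,a] = -R_s\bigl([D,a]D + D[D,a]\bigr)R_s$, expresses $[DR_s,a]$ as a sum of three terms, each of which contains the bounded factor $[D,a]$ and at least one factor from $\{R_s, DR_s\}$. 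Both of these are $\tau$-compact (for $DR_s$, use that $(DR_s)(DR_s)^* = R_s-(1+s^2)R_s^2\in\Comp$), and their norms decay like $O((1+s^2)^{-1/2})$. Norm convergence of the $s$-integral in $\Comp$ then gives $[F_D,a]\in\Comp$, proving~(i).

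For (ii), I would begin by establishing that $D+tA$ is self-adjoint on $\dom(D)$ with $\tau$-compact resolvent for every $t\in[0,1]$. Self-adjointness follows from the Kato--Rellich theorem because the $D$-graph bound of $tA$ is at most $|t|$ times that of $A$, and hence strictly less than $1$. For $\tau$-compactness of the resolvent I would write
\begin{equation*}
(i\lambda + D + tA)^{-1}
\;=\; (i\lambda + D)^{-1}\bigl(\id + tA\,(i\lambda+D)^{-1}\bigr)^{-1}
\end{equation*}
for $\lambda$ large enough that $\|tA(i\lambda+D)^{-1}\|<1$: the right bracket is then a bounded invertible operator, while the left factor is $\tau$-compact, so the product is $\tau$-compact. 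Since $[D+tA,a]=[D,a]+t[A,a]$ is bounded by hypothesis, $D+tA$ defines a semifinite spectral triple for $\mathcal A$, and applying~(i) produces the pre-Fredholm module with phase $F_{D+tA}$.

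The remaining point is that $t\mapsto F_{D+tA}$ is continuous in the operator norm of $\aM$, which is exactly the ingredient needed for this family to qualify as a homotopy of pre-Fredholm modules. This is the substantial step and the main obstacle: the bounded transform $D\mapsto F_D$ is notoriously delicate on unbounded Fredholms and has been the principal sticking point in earlier work (cf.~the discussion preceding the lemma and in~\cite{Wahl}). I would simply invoke the norm-differentiability of this transform along $D$-graph-bounded affine paths that is proved in Section~6, which immediately delivers the required continuity and completes the argument.
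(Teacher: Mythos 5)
Your proposal is correct, and its overall shape coincides with the paper's: part (i) reduces to the two observations that $1-F_D^2=(1+D^2)^{-1}$ is $\tau$-compact and that $[F_D,a]$ is $\tau$-compact, while part (ii) reduces to the $\tau$-compactness of the resolvent of $D+tA$ together with the operator-norm continuity of $t\mapsto F_{D+tA}$, which both you and the paper obtain from the identity $F_{D+tA}-F_D=t\,T_t\bigl(A(1+D^2)^{-1/2}\bigr)$ of Section~6. The difference is one of self-containedness rather than of route: where the paper simply cites \cite[Theorem~11]{PoSuNGapps} for the $\tau$-compactness of $[F_D,a]$ and \cite[Appendix~B, Lemma~7]{CarPhi1998-MR1638603} for the $\tau$-compactness of $(1+(D+tA)^2)^{-1}$, you unpack both black boxes, via the integral representation $F_D=\frac{2}{\pi}\int_0^\infty DR_s\,ds$ with a Leibniz/resolvent-identity computation for the commutator, and via Kato--Rellich plus the factorization $(i\lambda+D+tA)^{-1}=(i\lambda+D)^{-1}(\id+tA(i\lambda+D)^{-1})^{-1}$ for the resolvent. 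Both are essentially the arguments behind the cited results, so nothing is gained or lost in generality; your version is just more explicit. One small caveat worth flagging: the integral $\frac{2}{\pi}\int_0^\infty DR_s\,ds$ does not converge absolutely in operator norm (the integrand is only $O\bigl((1+s^2)^{-1/2}\bigr)$), so passing the commutator inside the integral requires a brief truncation-and-weak-limit step, or equivalently applying Leibniz to $F_D=D\cdot(1+D^2)^{-1/2}$ first and only then using the integral representation of $(1+D^2)^{-1/2}$, which does converge in norm.
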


\begin{proof}
  (\ref{PSTmodule})~It is sufficient to observe that, since~$D$ has a
  $\tau$-compact resolvent, the operator~$1 - F^2_D$ is $\tau$-compact.
  Observe also that if~$[D, a]$ is bounded, then~$[F_D, a]$ is
  $\tau$-compact (see~\cite[Theorem~11]{PoSuNGapps}).

  (\ref{PSThomotopy})~To see that~$D+A$ defines a spectral triple
  for~$\mathcal A$, we have to note that the operator~$(1+(D+A)^2)^{-1}$
  is $\tau$-compact (see~\cite[Appendix~B,
  Lemma~7]{CarPhi1998-MR1638603}).  In order to define a homotopy, the
  path~$\left\{ F_{D+tA},\ t \in [0, 1] \right\}$, should be continuous
  with respect to the operator norm.  This follows from the fact (see
  Section~\ref{sec:paths-self-adjoint} and the
  identity~(\ref{BasicIdea}) in particular) that there is a uniformly
  bounded family of continuous linear operators~$\left\{ {T_t}
  \right\}_{0 \leq t \leq 1}$ on~$\aM$ such that $$ F_{D + tA} - F_D =
  t\, T_t (A (1 + D^2)^{-\frac 12}). $$ The lemma is proved.
\end{proof}

Phillips definition of spectral flow \cite{Ph, Phillips-SF2-1997}, which
is extended and explained in some detail in \cite{BCPRSW}, depends on a
simple observation.  Let $\chi$ be the characteristic function of
$[0,\infty)$. Let $\mathcal N$ be a semifinite von Neumann algebra
 with semifinite,
faithful, normal trace, $\tau$.  Let $\{F_t\}$  be any norm
continuous path in the bounded self adjoint $\tau$-Fredholms
in $\mathcal N$ (indexed by some interval $
[a,b]$).   If we let $\pi$ be the projection onto the Calkin algebra
then one may show that
 $\pi\left(\chi (F_t)\right)=\chi\left(\pi (F_t)\right)$. As 
the spectra of $
\pi (F_t)$ are
bounded away from 0, this latter path is continuous.  By compactness
we can choose a partition $a=t_0<t_1<\cdots <t_k=b$ so that for each
$i=1,2,\cdots ,k$
$$||\pi\left(\chi (F_t)\right)-\pi\left(\chi (F_s)\right)||
<\frac{1}{2}\quad\hbox{for all }t,s\hbox{ in }[t_{i-1},
t_i].$$
Letting $P_i=\chi (F_{t_i})$ for $i=0,1,\cdots ,k$ then by the previous inequality
(see \cite{BCPRSW}) $P_{i-1}P_i: P_i H\to P_{i-1}H$ is Fredholm.
Then we define the {\it spectral 
flow of
the path $\{F_t\}$} to be the number:
$$sf\left(\{F_t\}\right)=\sum_{i=1}^k \mbox{ind}(P_{i-1}P_i)$$
which is independent of the choice of partition of $[a,b]$. 
This analytic point of view recovers
the intersection number approach to spectral flow when the operators in question have
discrete spectrum.

The spectral flow for a $D$-graph norm continuous path $\{D_t:\ t\in
[0,1]\}$ of unbounded self adjoint $\tau$-Fredholms joining $D=D_0$ to
$D_1$ affiliated to $\mathcal M$ is defined as the spectral flow along the corresponding path
$F_{D_t}$ of bounded $\tau$-Fredholms.  
When $u\in \mathcal A$ the spectral flow along the path 
$$D_t:=(1-t)D + t uDu^*= D+tu[D,u^*]$$ defines a pairing between
the K-homology class defined by the semifinite spectral triple $({\mathcal H}, D, {\mathcal A})$
and the
class of $u$ in $K_1({\mathcal A})$. The preceding lemma gives a
condition on the perturbation $A$ under which the spectral triple
defined by $D+A$ gives the same pairing with $K_1({\mathcal A})$ as does
the spectral triple defined by $D$.

Notice that for an unbounded self adjoint operator $D$ with
$(1+D^2)^{-1}$ being $\tau$-compact the map $D\mapsto F_D$ has range in
the space of self adjoint bounded $\tau$-Fredholms of norm less than or
equal to one and such that the essential spectrum is contained in $\pm
1$.  That is $1-F_D^2=(1+D^2)^{-1}$ is $\tau$-compact, explaining in
part the distinguished role played by this retract of the manifold of
all bounded $\tau$-Fredholm operators in the subsequent exposition.
 
\section{Spectral flow formula, bounded case.}
\label{sec:spectr-flow-form}

Let~$\aM$ be a von Neumann algebra and let~$\tau$ be a n.s.f.~trace
on~$\aM$.  $\left\| \cdot \right\|$ stands for the operator norm
on~$\aM$.  Let~$L^1(\aM)$ be the predual of~$\aM$ equipped with the
trace norm~$\left\| \cdot \right\|_1$.  Recall that an operator~$F \in
\aM$ is called $\tau$-Fredholm if and only if
\begin{enumerate}
\item the projections~$N_F$ and~$N_{F^*}$ are $\tau$-finite;
\item there is a $\tau$-finite projection~$p \in \aM$, such that~$\Ran(1-p)
  \subseteq \Ran(F)$.
\end{enumerate}
Here~$N_F$ is the projection onto the~$Ker(F)$ and~$\Ran(F)$ is the
range of the operator~$F$.

Let~$\Comp$ be the two-sided ideal of all $\tau$-compact operators
of~$\aM$.  The quotient space~$\aM / \Comp$ is a $C^*$-algebra.
Let~$\pi$ be the homomorphism $$ \pi : \aM \mapsto \aM / \Comp. $$
Recall the following characterization of the $\tau$-Fredholm operators
due to M.~Breuer (see~\cite[Theorem~1]{BreuerII}): {\it An operator~$F$
  is $\tau$-Fredholm if and only if the image~$\pi(F)$ is invertible.}
We set~$\delta_F = \left\| \pi(F)^{-1} \right\|^{-1}$.  Note that the
mapping~$F \mapsto \delta_F$ is continuous on the 
Banach manifold of $\tau$-Fredholm operators.

We shall denote the set of all self adjoint $\tau$-Fredholm operators~$F
\in \aM$ by~$\Fred_*$.  We also shall denote the subset of~$\Fred_*$
with~$\left\| F \right\| \leq 1$ and~$\delta_F = 1$ by~$\Fred_*^{\pm
  1}$.

The characterization of M.~Breuer implies that if~$F$ is a self adjoint
$\tau$-Fredholm operator, then, for every~$0 \leq \delta < \delta_F$,
the spectral projection~$E^{F}(-\delta, \delta)$ is $\tau$-finite i.e.,
$$ \tau \left( E^F(-\delta, \delta) \right) < + \infty,\ \ 0 \leq \delta
< \delta_F. $$ Indeed, fix~$0 \leq \delta < \delta_F$.  Consider the
operator~$F_\delta = F - F \, E^F(-\delta, \delta)$.  We then have that
$$ \left\| \pi(F) - \pi (F_\delta) \right\| \leq \left\| F - F_\delta
\right\| \leq \delta < \delta_F = \left\| \pi(F)^{-1} \right\|^{-1}. $$
Consequently, the operator~$\pi(F_\delta)$ is invertible and
therefore~$F_\delta$ is $\tau$-Fredholm.  This means that there is a
$\tau$-finite projection~$p$ such that~$1 - p \subseteq \Ran(F_\delta)$.
The latter implies that~$E^F(-\delta, \delta) \subseteq p$ which means
that the projection~$E^{F}(-\delta, \delta)$ is $\tau$-finite.
Furthermore,

\begin{lemma}
  \label{FredholmProperty}
  \begin{enumerate}
  \item For every~$F \in \Fred_*$ and every bounded Borel function~$g$
    supported on the interval~$[-\delta_{F}, \delta_{F}]$ such
    that~$\lim_{x \rightarrow \pm \delta_F} g(x) = g(\pm \delta_F) = 0$,
    the operator~$g(F)$ is $\tau$-compact.  In particular, if~$F \in
    \Fred_*^{\pm 1}$, then~$1 - F^2$ and~$F - B$ are $\tau$-compact,
    where~$B = 2 \chi_{[0, + \infty)}(F) - 1$. \label{TauCompact}
  \item For every~$F_0 \in \Fred_*$ and every~$0 < \delta < \delta_{F_0}$,
    there is a neighbourhood~$N$ of\/~$F_0$ such that the mapping~$F
    \mapsto E^{F}(-\delta, \delta)$ is trace norm bounded on the
    self adjoint part of~$N$. \label{FredholmPropertyBdd}
  \end{enumerate}
\end{lemma}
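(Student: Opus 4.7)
For part (i), the plan is to approximate $g(F)$ in operator norm by $\tau$-compact operators, exploiting the fact, established in the paragraph preceding the lemma, that $E^F(-\delta,\delta)$ is $\tau$-finite for every $0\le\delta<\delta_F$. Given $\eta>0$, the hypothesis $\lim_{x\to\pm\delta_F}g(x)=0$ lets one pick $\delta'<\delta_F$ with $|g|<\eta$ on $[-\delta_F,\delta_F]\setminus[-\delta',\delta']$. Splitting $g$ as the sum of its restrictions to these two sets, functional calculus gives
\begin{equation*}
g(F) = g(F)\,E^F([-\delta',\delta']) + g(F)\,E^F\bigl([-\delta_F,\delta_F]\setminus[-\delta',\delta']\bigr);
\end{equation*}
the first summand is a bounded operator multiplying the $\tau$-finite projection $E^F([-\delta',\delta'])\le E^F(-\delta_F,\delta_F)$, hence lies in $\Comp$, while the second has operator norm at most $\eta$. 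Since $\eta$ is arbitrary and $\Comp$ is norm closed, $g(F)\in\Comp$. For the consequences in $\Fred_*^{\pm1}$ one has $\delta_F=1$: taking $g(x)=1-x^2$ gives $1-F^2\in\Comp$, and the bounded Borel function $h(x)=x-(2\chi_{[0,+\infty)}(x)-1)$, equal to $x-1$ on $[0,1]$ and $x+1$ on $[-1,0)$, vanishes at $\pm1$ and yields $F-B=h(F)\in\Comp$.

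For part (ii), the plan is to dominate $E^F(-\delta,\delta)$ by a single fixed $\tau$-finite spectral projection of $F_0$. Choose $\delta''\in(\delta,\delta_{F_0})$, set $\epsilon=(\delta''-\delta)/2$, and let $N=\{F\in\aM:\left\|F-F_0\right\|<\epsilon\}$. For self-adjoint $F\in N$ and $P=E^F(-\delta,\delta)$ one has $\left\|FP\right\|\le\delta$, whence
\begin{equation*}
\left\|F_0 P\right\|\le\left\|FP\right\|+\left\|(F-F_0)P\right\|\le\delta+\epsilon<\delta''.
\end{equation*}
The decisive step is then the order inequality $P\le E^{F_0}([-r,r])$ derived from $\left\|F_0P\right\|\le r$: indeed, for $v\in\Ran(P)$ one has $v=Pv$, so $\left\|F_0 v\right\|\le r\left\|v\right\|$, giving $\langle F_0^2 v,v\rangle\le r^2\left\|v\right\|^2$ and placing $v$ in the range of $E^{F_0^2}([0,r^2])=E^{F_0}([-r,r])$. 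Applied with $r=\delta+\epsilon<\delta''$ and combined with $E^{F_0}([-r,r])\le E^{F_0}(-\delta'',\delta'')$, which is $\tau$-finite by the discussion preceding the lemma (as $\delta''<\delta_{F_0}$), this yields the uniform trace-norm bound $\left\|P\right\|_1=\tau(P)\le\tau(E^{F_0}(-\delta'',\delta''))<\infty$ on the self-adjoint part of $N$.

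The only genuinely non-mechanical ingredient I expect is the dominance relation $P\le E^{F_0}([-r,r])$ from $\left\|F_0 P\right\|\le r$ used in part (ii); the remaining steps reduce to invoking the $\tau$-finiteness of $E^F(-\delta,\delta)$ for $\delta<\delta_F$ and the norm closedness of $\Comp$, both of which are already available.
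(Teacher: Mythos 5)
Your proof of part (i) is essentially the paper's argument and is correct in substance: you approximate $g(F)$ in operator norm by the $\tau$-compact operators $g(F)\,E^F([-\delta',\delta'])$, and conclude by norm-closedness of $\Comp$. One small imprecision: you justify $\tau$-finiteness by $E^F([-\delta',\delta'])\le E^F(-\delta_F,\delta_F)$, but the latter projection need \emph{not} be $\tau$-finite (the established fact is only for $E^F(-\delta,\delta)$ with $\delta$ strictly less than $\delta_F$); the correct justification is $E^F([-\delta',\delta'])\le E^F(-\delta'',\delta'')$ for some $\delta'<\delta''<\delta_F$. This is easy to repair.

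Part (ii) contains a genuine error at the step you yourself single out as the decisive one. From $\left\|F_0 P\right\|\le r$ you conclude, for $v\in\Ran(P)$, that $\langle F_0^2 v,v\rangle\le r^2\left\|v\right\|^2$ \emph{places $v$ in the range of} $E^{F_0^2}([0,r^2])$. This inference is false: bounding the \emph{mean} of the spectral measure of $v$ with respect to $F_0^2$ does not force that measure to be \emph{supported} in $[0,r^2]$. A concrete counterexample: take $\sH=\Cx^2$, $F_0=\mathrm{diag}(0,2)$, $P$ the rank-one projection onto the span of $v=(1,1)/\sqrt 2$, and $r=3/2$. Then $\left\|F_0 P\right\|=\sqrt 2<r$ and $\langle F_0^2 v,v\rangle=2<r^2$, yet $v$ has a nonzero component in the eigenspace for the eigenvalue $2>r$, so $v\notin\Ran(E^{F_0}([-r,r]))$ and the asserted order relation $P\le E^{F_0}([-r,r])$ fails. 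So the proof as written does not establish the trace bound.

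The underlying idea can in fact be salvaged, but it requires replacing the false operator inequality by a weaker (and true) disjointness statement plus Kaplansky's parallelogram law: from $\left\|F_0 P\right\|\le \delta+\epsilon<r$ one gets, for nonzero $v\in\Ran(P)\cap\Ran(E^{F_0}(\Rl\setminus(-r,r)))$, both $\left\|F_0 v\right\|\le(\delta+\epsilon)\left\|v\right\|$ and $\left\|F_0 v\right\|\ge r\left\|v\right\|$, a contradiction; hence $P\wedge E^{F_0}(\Rl\setminus(-r,r))=0$, and the parallelogram law then gives $\tau(P)\le\tau(E^{F_0}(-r,r))<\infty$. This route, once corrected, is genuinely different from the paper's: the paper proves (ii) first for $F_0\in\Fred_*^{\pm1}$, via the pointwise bound $\chi_{(-\delta,\delta)}(x)\le(1-x^2)/(1-\delta^2)$ for $|x|\le1$ together with generalized singular-value estimates in the sense of Fack--Kosaki, and then reduces the general case by composing with a smooth $\theta$ (using operator-norm continuity of $F\mapsto\theta(F)$). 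The corrected projection argument is more elementary and avoids both the singular-value machinery and the reduction step; but as submitted, the crucial dominance claim is incorrect.
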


\begin{proof}
  (\ref{TauCompact})~To see that the operator~$g(F)$ is $\tau$-compact,
  it is sufficient to show that, for every~$\epsilon > 0$, there is a
  $\tau$-compact operator~$K_\epsilon$ such that~$\left\| g(F) -
    K_\epsilon \right\| < \epsilon$.

  Fix~$\epsilon > 0$.  Let~$x_\epsilon$ be the point~$0 < x_\epsilon <
  \delta_F$ such that $$ \left| g(x) \right| < \epsilon,\ \ \text{for
    every}\ x_\epsilon \leq \left| x \right| \leq \delta_F. $$ We
  set~$K_\epsilon = g(F)\, \chi_{[-x_\epsilon, x_\epsilon]}(F)$.  Since
  the function~$g$ is bounded and the projection~$~\chi_{[-x_\epsilon,
    x_\epsilon]}(F)$ is $\tau$-finite, the operator~$K_\epsilon$ is
  $\tau$-compact.  On the other hand, by 
  the choice of~$x_\epsilon$, we see
  that $$ \left\| g(F) - K_\epsilon \right\| \leq \sup_{x_\epsilon \leq
    \left| x \right| \leq \delta_F} \left| g(x) \right| < \epsilon. $$
  The proof is finished.

  (\ref{FredholmPropertyBdd})~Assume first that~$F_0 \in \Fred_*^{\pm
    1}$.  In this special case, the proof employs the argument
  of~\cite[Lemma~1.26.(ii)]{ACS}.  Let~$F = F_0 + A$ where~$A$ is
  self adjoint and~$\left\| A \right\| \leq \frac 16 \, (1 - \delta^2)$.
  Clearly, $$ 1 - F^2 = 1 - F_0^2 + B $$ where~$B = F_0 A + A F_0 - A^2$
  and~$\left\| B \right\| \leq \frac 12 \, (1 - \delta^2)$.
  Let~$\mu_t(X)$ be a decreasing rearrangement of the operator~$X \in
  \aM$ (see~\cite{FK1986}).  Note that $$ \chi_{(-\delta, \delta)} (x)
  \leq \frac {1 - x^2}{1 - \delta^2},\ \ \left| x \right| \leq 1.
  $$ This observation together with~\cite[Lemma~2.5]{FK1986} implies
  that
  \begin{multline*}
    \mu_t\left( E^F(-\delta, \delta) \right) \leq \frac 1{1 - \delta^2}
    \, \mu_t \left( 1 - F^2 \right) \\ \leq \frac 1{1 - \delta^2} \left[
      \mu_{\frac t2} \left( 1 - F_0^2 \right) + \mu_{\frac t2} (B)
    \right] \\ \leq \frac 1 {1 - \delta^2}\, \mu_{\frac t2} \left( 1 -
      F_0^2 \right) + \frac 12 .
  \end{multline*}
  Since the operator~$1 - F_0^2$ is $\tau$-compact, the function~$t
  \mapsto \mu_t (1 - F_0^2)$ is decreasing to~$0$ at~$+ \infty$.  Thus,
  we see that the functions~$t \mapsto \mu_t \left( E^F(-\delta, \delta)
  \right)$ are uniformly majorized across $$ F \in N = \left\{ F_0 + A,\
    \ \left\| A \right\| \leq \frac 16\, (1 - \delta^2) \right\} $$ by a
  single decreasing function with value~$\frac 12$ at~$+ \infty$.  On
  the other hand, we know that $$ \mu_{t} \left( E^F(- \delta, \delta)
  \right) = \chi_{\left[ 0, \tau\left( E^F(-\delta, \delta) \right)
    \right]}. $$ Consequently, the value $$ \tau \left( E^F(-\delta,
    \delta) \right) $$ is uniformly bounded across~$N$.
  
  Let now~$F_0 \in \Fred_*$.  Let~$\theta$ be a $C^2$-function such that
  (i)~$\theta'$ is nonnegative and supported on the
  interval~$[-\delta_{F_0}, \delta_{F_0}]$; (ii)~$\theta(\pm \infty) =
  \pm 1$.  Clearly,~$\theta(F_0) \in \Fred_*^{\pm 1}$.  Moreover, the
  mapping~$F \mapsto \theta(F)$ is operator norm continuous (see
  Remark~\ref{OperNormCont}).  Consequently, the claim of the lemma for
  the general~$F_0 \in \Fred_*$ holds with the preimage~$\theta^{-1}(N)$
  of the ball~$N$ constructed with respect to the operator~$\theta(F_0)
  \in \Fred_*^{\pm 1}$ above.
\end{proof}

\def\Fred{{\mathcal F}}

If~$t \mapsto F_t \in \Fred_*$ is a continuous path of self adjoint
$\tau$-Fredholm operators, then $sf(F_t)$ stands for the spectral flow
as defined in~\cite{BCPRSW, Phillips-SF2-1997}.  We shall prove the
following analytic formula for spectral flow, which extends that
of~\cite[Theorem~6.4]{Wahl} and~\cite[Theorem~3.18]{ACS}.

\begin{theorem}
  \label{thm:SFformulaI}
  Let~$F_t: [0, 1] \mapsto \Fred_*$ be a piecewise $C^1$-path of self
  adjoint $\tau$-Fredholm operators.  If\/~$h$ is a positive
  $C^2$-function supported on~$[-\delta, \delta]$, where~$\delta =
  \min_{0 \leq t \leq 1} \delta_{F_t}$, such that
  \begin{enumerate}
  \item $\int_{-\delta}^{+\delta} h(x)\, dx = 1$;  \label{SFformulaIi}
  \item $ \int_0^1 \left\| \dot F_t \, h(F_t) \right\|_1 \, dt < +
    \infty$; \label{SFformulaIii}
  \item $H(F_1) - H(F_0) + \frac 12 B_0 - \frac 12 B_1 \in L^1(\aM)$,
    where~$H(x)$ is an antiderivative of~$h(x)$ such that~$H(\pm \delta)
    = \pm \frac 12$ and~$B_j$ is the phase of\/~$F_j$, i.e., $B_j =
    2\chi_{[0, +\infty)}(F_j) - 1$, $j=0,1$; \label{SFformulaIiii}
  \end{enumerate}
  then
  \begin{equation}
    \label{SFformulaI}
    sf(F_t) = \int_0^1 \tau \left( \dot F_t \, h(F_t)
    \right)\, dt + \tau \left( H(F_1) - H(F_0) + \frac 12 B_0 - \frac 12
      B_1 \right).
  \end{equation}
\end{theorem}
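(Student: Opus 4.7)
The strategy is: use a fine partition of $[0,1]$ to invoke Phillips' definition of spectral flow locally, establish an identity on each subinterval via the double operator integral (DOI) chain rule of Section~5, and then telescope. Introduce the auxiliary function $\phi(x) := H(x) - \tfrac12\bigl(2\chi_{[0,\infty)}(x) - 1\bigr)$, which is bounded and supported in $[-\delta, \delta]$. By part~(\ref{TauCompact}) of Lemma~\ref{FredholmProperty}, $\phi(F)$ is trace class for any $F \in \Fred_*$ with $\delta_F \geq \delta$, since its support lies inside the $\tau$-finite spectral projection $E^F(-\delta,\delta)$. Hypothesis~(iii) thus reads simply as $\phi(F_1) - \phi(F_0) \in L^1(\aM)$, the telescoped boundary datum. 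Choose a partition $0 = t_0 < t_1 < \cdots < t_N = 1$ fine enough that on each subinterval $\|\pi(\chi_{[0,\infty)}(F_t)) - \pi(\chi_{[0,\infty)}(F_s))\| < 1/2$, so Phillips' formula $sf(F_t|_{[t_{i-1},t_i]}) = \mathrm{ind}(P_{i-1}P_i)$ applies with $P_i = \chi_{[0,\infty)}(F_{t_i})$.

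On each subinterval $[t_{i-1}, t_i]$, the core analytic step is the DOI chain rule of Section~5: for the $C^2$ function $H$ and the norm-$C^1$ path $F_t$, one has the operator-valued identity
\begin{equation*}
H(F_{t_i}) - H(F_{t_{i-1}}) \;=\; \int_{t_{i-1}}^{t_i} T^{F_t,F_t}_{h^{[1]}}(\dot F_t)\, dt,
\end{equation*}
where $T^{F,F}_{h^{[1]}}$ denotes the double operator integral whose symbol $h^{[1]}(\lambda,\mu) = (H(\lambda)-H(\mu))/(\lambda-\mu)$ equals $h(\lambda)$ on the diagonal. After rewriting $H(F_{t_j}) = \phi(F_{t_j}) + \tfrac12 B_{t_j}$ the LHS becomes a genuine trace-class difference plus a phase correction, and applying $\tau$ and using the DOI trace identity $\tau(T^{F,F}_{h^{[1]}}(X)) = \tau(h(F)X)$ yields
\begin{equation*}
  \tau\bigl(\phi(F_{t_i}) - \phi(F_{t_{i-1}})\bigr) + \tfrac12\tau(B_{t_i} - B_{t_{i-1}})
  \;=\; \int_{t_{i-1}}^{t_i} \tau\bigl(\dot F_t\, h(F_t)\bigr)\, dt .
\end{equation*}
The local Breuer index $\mathrm{ind}(P_{i-1}P_i)$ equals $\tfrac12 \tau(B_{t_i} - B_{t_{i-1}})$ (in the appropriate sign convention coming from Phillips' definition), which identifies the local form of~\eqref{SFformulaI}.

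Summing over $i$, the integrals concatenate to $\int_0^1 \tau(\dot F_t h(F_t))\, dt$, the local spectral flows add to $sf(F_t)$ by Phillips' additivity, and the trace-class differences $\tau(\phi(F_{t_i}) - \phi(F_{t_{i-1}}))$ telescope to $\tau(\phi(F_1) - \phi(F_0)) = \tau(H(F_1) - H(F_0) + \tfrac12 B_0 - \tfrac12 B_1)$, yielding~\eqref{SFformulaI}. The principal obstacle, and the reason Section~5 is essential, is making the DOI chain rule rigorous in trace norm under only hypothesis~(ii): one must cope with the non-commutativity of $F_t$ and $\dot F_t$ (so the naive formula $\tfrac{d}{dt}H(F_t) = h(F_t)\dot F_t$ fails), prove $L^1$-convergence of $T^{F_t,F_t}_{h^{[1]}}(\dot F_t)$ as $t$ varies, and justify interchanging $\tau$ with the $t$-integral. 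The functional-calculus techniques of~\cite{CarPhi1998-MR1638603, CarPhi2004-MR2053481} that sufficed for affine paths are inadequate for arbitrary paths in $\Fred_*$; it is precisely this gap that the DOI machinery closes.
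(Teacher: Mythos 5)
Your strategy is genuinely different from the paper's and, as stated, has a gap at the pivotal analytic step.

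The paper never attempts the operator-valued chain rule $H(F_{t_i})-H(F_{t_{i-1}}) = \int T^{F_t,F_t}_{h^{[1]}}(\dot F_t)\,dt$ in trace norm. Instead it proves (Theorem~\ref{SmallLoopsSF}) that the scalar one-form $X\mapsto \tau(Xh(F))$ is exact on a convex neighbourhood by exhibiting a potential $\theta(F)=\int_0^1\tau(\dot F_r h(F_r))\,dr$ and verifying $d\theta_F(X)=\tau(Xh(F))$; this only requires differentiating a $\Cx$-valued function of $F$, and the DOI identities of Lemma~\ref{SmallLoopsSFcompletion} are all for the compactly supported $h$. The trace-class Newton--Leibniz rule for $H(F_t)$ along the path is exactly what is avoided, because it is not a consequence of the DOI results of Section~5: the divided difference $h^{[1]}(\lambda,\mu)=(H(\lambda)-H(\mu))/(\lambda-\mu)$ is supported on a \emph{cross}, not a compact square, so in particular Lemma~\ref{TraceLemma} (which you invoke for the trace identity $\tau(T^{F,F}_{h^{[1]}}(X))=\tau(h(F)X)$) does not apply as stated --- its hypothesis is that the symbol lives in $I\times I$ with $\chi_I(D)$ $\tau$-finite. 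The paper applies Lemma~\ref{TraceLemma} only after deliberately pre- and post-multiplying by compactly supported factors $k^{1/2}$ so that the symbol does have compact square support. Establishing your chain rule, with the concomitant trace-norm integrability of $T^{F_t,F_t}_{h^{[1]}}(\dot F_t)$ under only hypothesis (ii), would require a new argument not present in Section~5.

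A second, related issue: you write $\mathrm{ind}(P_{i-1}P_i)=\tfrac12\tau(B_{t_i}-B_{t_{i-1}})$, which requires $P_{t_i}-P_{t_{i-1}}$ to be trace class. The Phillips partition condition $\|\pi(P_{t_i})-\pi(P_{t_{i-1}})\|<\tfrac12$ gives only Breuer--Fredholmness of $P_{t_{i-1}}P_{t_i}$, not a trace-class difference. You would obtain trace-classness of $B_{t_i}-B_{t_{i-1}}$ \emph{from} the chain rule (since the $\phi(F_{t_i})$ are trace class), but that makes the argument circular at the very point it is supposed to be doing work. The paper sidesteps this entirely: after reducing to $\Fred_*^{\pm1}$ (so that each $F_j-B_j$ is $\tau$-compact, hence the rectangle loop $F_0\to F_1\to B_1\to B_0\to F_0$ stays in a single convex $N$), it computes the contribution of each straight-line side $F_j\to B_j$ via ordinary commutative functional calculus, and evaluates $sf(B_t)$ for the straight-line phase segment by the projection decomposition $E\oplus(1-E)$ and reduction to a finite algebra. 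Your proposal omits the $\Fred_*^{\pm1}$ reduction and the loop construction altogether, replacing them with the unproved chain rule; these are not minor shortcuts but precisely what makes the paper's argument close.
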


\begin{remark}
(i) Observe that every positive $C^1$-function~$h$, which is supported
on a proper subinterval of~$[-\delta, \delta]$ (where~$\delta$ is
defined in Theorem~\ref{thm:SFformulaI} and such that $$
\int_{-\delta}^{\delta} h(x)\, dx = 1, $$ satisfies the conditions of
Theorem~3.  Indeed, (\ref{SFformulaIi}) is trivial;
(\ref{SFformulaIii}) follows from
Lemma~\ref{FredholmProperty}.(\ref{FredholmPropertyBdd}) which implies
that the function~$t \in [0, 1] \mapsto \left\| h(F_t) \right\|_1$ is
bounded; and (\ref{SFformulaIiii}) follows from
Lemma~\ref{FredholmProperty}.(\ref{FredholmPropertyBdd}) again and the
observation that the function $$ \chi_{(0, +\infty)}(x) - \frac 12 -
H(x) $$ is bounded and supported on a proper subinterval of~$[-\delta,
\delta]$.

(ii) In previous papers the case where we work in a subset of the 
$\tau$-Fredholms consisting of operators $F$ satisfying the condition $(1-F^2)^{n/2}$ is trace class \cite{CarPhi1998-MR1638603}
or $e^{-|1-F^2|^{-1}}$ is trace class \cite{CarPhi2004-MR2053481} (the $n$-summable or theta summable cases respectively) were studied.
Thus in the setting of Theorem \ref{thm:SFformulaI}  we 
would choose $h$ to be given on $[-1,1]$ by either $h(x)= (1-x^2)^{n/2}$ or
$h(x)=e^{- |1-x^2|^{-1}}$ and to be zero on the complement of $[-1,1]$.
Notice that these two functions do not satisfy the assumptions of the theorem
if we allow operators with essential spectrum $\pm 1$. This minor difficulty is handled by an approximation argument which we describe in the proofs below.
\end{remark}

Let~$t\in [0, 1] \mapsto F_t$ be a loop of self adjoint $\tau$-Fredholm
operators, $F_0 = F_1$.  It is shown
in~\cite[Remark~2.4]{Phillips-SF2-1997} that if the loop~$F_t$ lies
within sufficiently small neighbourhood in~$\aM$, then the spectral flow
along this loop is~$0$.  One of the steps in proving the analytic formula
of Theorem~\ref{thm:SFformulaI} is to show that the
integral~(\ref{SFformulaI}) is also~$0$ for such loops.  This is
precisely the part where the proof of the spectral formula
in~\cite{Wahl} exploits the assumption that the mapping~$t \mapsto 1 -
F_t^2$ is~$C^1$ with respect to the trace norm.  We shall first see
that, with some modification, the proof of~\cite[Proposition~3.5]{ACS}
allows us to avoid the latter restriction (see Theorem~\ref{SmallLoopsSF}
below).  This modification is based on results from~\cite{PS-DiffP,
  PSW-DOI, PoSu}.  

Let~$F_0 \in \aM$ be a $\tau$-Fredholm operator and let~$N(F_0)$ be the
neighbourhood given by $$ N(F_0) = \left\{ F \in \aM,\ \ \left\|
    \pi\left( F - F_0 \right) \right\| < \delta_{F_0} \right\}. $$
Clearly, applying M.~Breuer's result, every~$F \in N(F_0)$ is
$\tau$-Fredholm.  Observe also that~$N(F_0)$ is convex.

\begin{theorem}
  \label{SmallLoopsSF}
  Let~$t \in [0, 1] \mapsto F_t \in \Fred_*$ be a piecewise $C^1$-loop
  ($F_0 = F_1$) of $\tau$-Fredholm operators such that~$F_t \in N$, $t
  \in [0, 1]$, where~$N$ is an open convex subset of\/~$\aM$ such that
  the norm closure~$\bar N$ is a subset of~$N(F_0)$.  If\/~$h$ is a
  positive $C^2$-function such that
  \begin{enumerate}
  \item $\supp h \subseteq [-\delta, \delta]$ where~$\delta = \min_{F
      \in N} \delta_F>0$;
  \item $\int_0^1 \left\| \dot F_t \, h(F_t) \right\|_1 \, dt < +
    \infty$,
  \end{enumerate}
  then $$ \int_0^1 \tau \left( \dot F_t \, h(F_t) \right)\, dt = 0. $$
\end{theorem}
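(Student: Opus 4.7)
The plan is to realise the integrand $\tau(\dot F_t\, h(F_t))$ as the value of a closed one-form on the convex set~$N$ and then invoke Poincar\'e's lemma. For each~$F \in N$ I would define
\[
\Theta_F : \aM_{sa} \to \Cx, \qquad \Theta_F(X) := \tau\bigl(X\, h(F)\bigr).
\]
Since~$\supp h \subseteq [-\delta,\delta]$ and~$\delta \leq \delta_F$ throughout~$N$, the spectral theorem together with Lemma~\ref{FredholmProperty}.(\ref{FredholmPropertyBdd}) gives~$h(F) \in L^1(\aM)$ with~$\|h(F)\|_1$ uniformly bounded on~$N$; hence~$\Theta_F$ is a well-defined bounded linear functional.

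The technical heart of the proof will be to show that~$F \mapsto h(F)$ is Gateaux-differentiable from~$N$ into~$L^1(\aM)$ with derivative given by the double operator integral
\[
\partial_X h(F) = T^{F,F}_{h^{[1]}}(X), \qquad h^{[1]}(x,y) := \frac{h(x)-h(y)}{x-y}.
\]
This is where the DOI calculus developed in Section~5 (in the spirit of~\cite{ACS}) enters, and it is the only place where the~$C^2$-regularity of~$h$ and the $\tau$-finiteness of~$E^{F}(-\delta,\delta)$ are simultaneously exploited. Granting this, $\partial_Y \Theta_F(X) = \tau\bigl(X\, T^{F,F}_{h^{[1]}}(Y)\bigr)$; the symmetry~$h^{[1]}(x,y)=h^{[1]}(y,x)$ combined with the cyclicity of~$\tau$---applied after writing the DOI as a double spectral integral against~$dE^F \otimes dE^F$ and relabelling~$(x,y)\leftrightarrow(y,x)$---yields
\[
\tau\bigl(X\, T^{F,F}_{h^{[1]}}(Y)\bigr) \;=\; \tau\bigl(Y\, T^{F,F}_{h^{[1]}}(X)\bigr),
\]
so that~$\partial_Y \Theta_F(X) = \partial_X \Theta_F(Y)$ and~$\Theta$ is a closed one-form on~$N$.

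Finally, convexity of~$N$ turns closedness into exactness by a standard Poincar\'e construction: fix any~$F_* \in N$ and set
\[
G(F) \;:=\; \int_0^1 \Theta_{(1-s) F_* + s F}(F - F_*)\, ds,
\]
which (using $d\Theta=0$) satisfies $dG = \Theta$ on~$N$. Applying the fundamental theorem of calculus to~$G \circ F_t$ on each~$C^1$-piece of the loop and telescoping gives
\[
\int_0^1 \tau\bigl(\dot F_t\, h(F_t)\bigr)\, dt \;=\; \int_0^1 \Theta_{F_t}(\dot F_t)\, dt \;=\; G(F_1) - G(F_0) \;=\; 0,
\]
the last equality holding since~$F_0 = F_1$. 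The principal obstacle in this plan is the DOI differentiability step: rigorously establishing the~$L^1(\aM)$-valued Gateaux differentiability of~$F \mapsto h(F)$ with the claimed DOI derivative, together with the DOI--trace cyclicity identity in a general semifinite von Neumann setting. This is exactly what the non-commutative differential calculus of Section~5 is designed to deliver; everything else in the argument is formal.
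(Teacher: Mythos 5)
Your proposal is essentially the paper's own proof: the paper likewise constructs the potential by integrating along radial paths (its $\theta(F)=\int_0^1 \tau(\dot F_r\,h(F_r))\,dr$ with $F_r=(1-r)F_0+rF$ is exactly your $G$ with $F_*=F_0$) and then verifies $d\theta_F(X)=\tau(X\,h(F))$ by the DOI calculus of Section~5, the self-duality $\tau(T_0(Y)Z)=\tau(Y\,T_0(Z))$ playing precisely the role of the $h^{[1]}$-symmetry/trace-cyclicity you invoke to get symmetry of the second derivative. The DOI differentiability step you correctly flag as the principal obstacle is handled in Lemma~\ref{SmallLoopsSFcompletion}, whose key device---not visible in your sketch but essential for the derivative to land in $L^1(\aM)$---is to first shrink $\supp h$ strictly inside $(-\delta,\delta)$ and introduce a cutoff $g$ with $g^{1/2}\in C^2$, equal to $1$ near $\supp h$, so that the DOI operator splits as $T_s=T_s'+T_s''$ with $T_s'$ absorbing a left factor $g(F+srX)$ and $T_s''$ a right factor $g(F)$, each of which is trace-class by Lemma~\ref{FredholmProperty}.(\ref{FredholmPropertyBdd}).
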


\begin{proof}
  Observe that, since the space~$L^1([0, 1], L^1(\aM))$ (= the space of
  all Bochner integrable $L^1(\aM)$-valued functions on~$[0, 1]$) is
  separable, for every function~$h$ such that~$\supp h \subseteq
  [-\delta, \delta]$ there is a sequence\footnote{To this end, it is
    sufficient to construct a sequence of functions~$h_n$ such that the
    difference~$h-h_n$ is uniformly bounded and the support of the
    difference~$h-h_n$ vanishes as~$n \rightarrow \infty$ and then refer
    to, e.g.~\cite[Proposition~2.1]{CS1994}.  Observe also that one can
    also achieve that $$ \int_{\Rl} h_n (x)\, dx = 1,\ \ \forall n \geq
    1. $$\label{ToProperSubinterval}} of positive
  $C^2$-functions~$\left( h_n \right)_{n = 1}^\infty$ such that $$ \supp
  h_n \subseteq (-\delta, \delta) \ \ \text{and}\ \ \lim_{n \rightarrow
    \infty} \int_0^1 \left\| \dot F_t h_n(F_t) - \dot F_t h(F_t)
  \right\|_1 \, dt = 0. $$ Consequently, $$ \int_0^1 \tau \left( \dot
    F_t \, h_n (F_t) \right)\, dt = 0,\ \forall n \geq 1\ \
  \Longrightarrow\ \ \int_0^1 \tau \left( \dot F_t\, h(F_t) \right)\, dt
  = 0. $$ Thus, without loss of generality, we may assume
  that~$\supp h \subseteq (-\delta, \delta)$.

  Let~$\supp h \subseteq (-\delta, \delta)$ and let~$g$ be a positive
  function such that~$\supp g \subseteq (-\delta, \delta)$, $g^{\frac
    12} \in C^2$ and~$g(x) = 1$ for every~$x$ in some neighbourhood
  of~$\supp h$.  Let us show that the mapping $t \in [0, 1] \mapsto
  g(F_t)$ is a continuous function with respect to the trace norm.
  Indeed, by the representation $$ g(F_t) - g(F_s) = g^{\frac 12} (F_t)
  \left( g^{\frac 12} (F_t) - g^{\frac 12} (F_s) \right) + \left(
    g^{\frac 12}(F_t) - g^{\frac 12} (F_s) \right)\, g^{\frac 12}(F_s),
  $$ is it clear that this mapping is continuous in the trace norm
  provided the function $t \mapsto g^{\frac 12} (F_t)$ is continuous in
  the operator norm and bounded in the trace norm.  For the former, note
  that~$t \mapsto F_t$ is operator norm continuous and~$g^{\frac 12} \in
  C^2$ and therefore the path~$t \mapsto g^{\frac 12}(F_t)$ is also
  operator norm continuous (see Remark~\ref{OperNormCont}).  For the
  latter, observe that (i)~$0 \leq g(x) \leq \chi(x)$, for some
  indicator function~$\chi$ of a proper subinterval of~$(- \delta,
  \delta)$; (ii)~consequently, according to
  Lemma~\ref{FredholmProperty}.(\ref{FredholmPropertyBdd}), the
  function~$t \mapsto g^{\frac 12} (F_t)$ is bounded with respect to the
  trace norm in a small neighbourhood of every point~$t \in [0, 1]$;
  (iii)~finally, due to compactness, this function is also globally
  bounded.  Observe also that, since~$h$ is $C^2$, the mapping~$t
  \mapsto h(F_t)$ is operator norm continuous (see
  Remark~\ref{OperNormCont}).  Furthermore, since~$h(F_t) = h(F_t)
  g(F_t)$, the mapping~$t \mapsto h(F_t)$ is also continuous with
  respect to the trace norm.  We shall single out the argument presented
  above as the following lemma.

  \begin{lemma}
    \label{FredholmContArgument}
    For every~$F_0 \in \Fred_*$ and every $C^2$-function~$h$ supported
    on a proper subinterval of~$[-\delta_{F_0}, \delta_{F_0}]$, there is
    a neighbourhood~$N$ of~$F_0$ such that the mapping~$F \in N \mapsto
    h(F)$ is trace norm continuous on the self adjoint part of~$N$.
\end{lemma}

  From this point on, the proof of Theorem~\ref{SmallLoopsSF} follows 
  that of~\cite[Proposition~3.5]{ACS}.  We shall show that there is a
  function~$\theta : N \mapsto \Cx$ such that   \begin{equation}
    \label{LastObjective}
    d\theta_F(X) =
    \tau(X h(F)),\ \ X \in \aM. 
  \end{equation}
  In other words, we shall show that the one form~$\tau \left( X h(F)
  \right)$ is exact.  This will finish the proof.

  Fix the element~$F \in N(F_0)$.  For the rest of the proof, $r \in [0,
  1] \mapsto F_r \in N$ stands for the straight line path
  connecting~$F_0$ and~$F$ (i.e., $F_r = (1 - r) F_0 + r F$).  We
  introduce the function~$\theta: N \mapsto \Cx$ as follows $$ \theta(F)
  = \int_0^1 \tau \left( \dot F_r \, h(F_r) \right)\, dr. $$ Let~$d
  \theta_F(X)$, $X \in \aM$ be the differential form of~$\theta$, i.e.,
  $$ d \theta_F(X) := \lim_{s \rightarrow 0} \frac 1s \left( \theta(F +
    sX) - \theta(F) \right).  $$ Now we prove~(\ref{LastObjective}).
  Fix~$X \in \aM$.  Following the definition of the function~$\theta$, a
  simple computation yields
  \begin{align}
    \label{LastObjectiveI}
    \frac 1s \left( \theta(F + sX) - \theta(F) \right) = &\, \int_0^1
    \tau \left( X\, h(F_r + sr X) \right)\, dr \cr + &\, \int_0^1 \tau
    \left( \dot F_r \, \frac 1s \left( h(F_r + sr X) - h(F_r) \right)
    \right)\, dr,
  \end{align}
  where~$r \in [0, 1] \mapsto F_r$ is the straight line path
  connecting~$F_0$ and~$F$.  

  Let us consider the algebra~$\aN = L^\infty[0, 1] \bar \otimes \aM$
  equipped with the trace~$\tau_1 = \int_0^1 dr \otimes \tau$
  (see~\cite{Takesaki2002-MR1873025}).  The mapping~$\bar F : r \mapsto
  F_r$ is a $\tau_1$-Fredholm operator in~$\aN$ with~$\delta_{\bar F}
  \geq \delta$ and the mapping~$\bar X : r \mapsto rX$ is an element
  of~$\aN$.  Applying Lemma~\ref{FredholmContArgument}, we see that the
  mapping~$s \mapsto h(\bar F + s \bar X)$ is continuous in~$L^1(\aN)$
  in some neighbourhood of~$0$.  Consequently, letting~$s \rightarrow
  0$, yields that the first term in~(\ref{LastObjectiveI}) approaches $$
  \int_0^1 \tau \left( X\, h(F_r) \right)\, dr. $$ For the second term
  of~(\ref{LastObjectiveI}), we shall show that
  \begin{equation}
    \label{DOIinput}
    \lim_{s \rightarrow
      0} \int_0^1 \tau \left( \dot F_r\, \frac 1s \left( h(F_r + sr X) -
        h(F_r) 
      \right) \right)\, dr = \int_0^1 r\, \tau \left( X \, \frac d{dr}
      \left[ h(F_r) \right] \right) \, dr . 
  \end{equation}
  If~(\ref{DOIinput}) is proved, then, letting~$s \rightarrow 0$, we see
  from~(\ref{LastObjectiveI}) that $$ d \theta_F(X) = \tau \left( X
    \left( \int_0^1 h(F_r) + r \frac d{dr} \left[ h(F_r) \right] \, dr
    \right) \right). $$ Thus, to finish the proof
  of~(\ref{LastObjective}), we have to show that $$ \int_0^1 h(F_r) + r
  \, \frac {d}{dr} \left[ h(F_r) \right]\, dr = h(F). $$ This readily
  follows if we integrate the second term by parts.  Namely, integrating
  by parts, we have $$ \int_0^1 r \frac {d}{dr} \left[ h(F_r) \right] \,
  dr = h(F_1) - \int_0^1 h(F_r)\, dr. $$

  Next we prove~(\ref{DOIinput}).  The proof of~(\ref{DOIinput}) heavily
  relies on the theory of Double Operator Integrals (DOIs) developed
  in~\cite{PSW-DOI, PS-DiffP, PoSu} recently.  We will describe in
  Section 5 sufficient background on DOIs for the reader to appreciate
  their role in this Section.  Let again~$\aN = L^\infty[0,1] \bar
  \otimes \aM$ be a tensor product von Neumann algebra with the
  trace~$\tau_1 = \int_0^1 dr \otimes \tau$.  It is proved in
  Lemma~\ref{SmallLoopsSFcompletion} below that there are
  families\footnote{The property~(\ref{DOIdiff}) follows from the
    corresponding statement of Lemma~\ref{SmallLoopsSFcompletion} if one
    takes the group of $*$-automorphisms given by translations
    in~$L^\infty[0, 1] \bar \otimes \aM$, i.e.\ $$ \gamma_t(F_r) = F_{r
      + t},\ \ F_r \in \aN,\ r, t \in [0, 1] $$ where the group~$[0, 1]$
    is equipped with summation modulo~$1$\label{DiffFN}} of linear
  operators~$\left\{ T_s \right\}$, $\left\{ T'_s \right\}$ and~$\left\{
    T''_s \right\}$ uniformly bounded on~$\aN$ and on~$L^1(\aN)$ such
  that

  {  
    \renewcommand{\theenumi}{{\rm \alph{enumi}}}
    
    \begin{enumerate}
    \item $T_s = T_s' + T_s''$; \label{DOIresolution}
    \item $T_s'(Y) = g(F_r + srX) \, T'_s\left(Y \right)$, $Y \in \aN$;
      \label{DOIleftF}
    \item $T_s''(Y) = T''_s \left( Y\right) \, g(F_r) $, $Y \in \aN$;
      \label{DOIrightF}
    \item $h(F_r + sr X) - h(F_r) = T_s (sr X)$; \label{DOIperturbation}
    \item $\frac d{dr} \left[ h(F_r) \right] = T_0(\dot F_r)$;
      \label{DOIdiff}
    \item $\tau(T_0(Y) Z) = \tau(Y T_0(Z))$, $Y, Z \in \aN$;
      \label{DOIduality}
    \item $\lim_{s \rightarrow 0} \left\| T'_s(Y) - T'_0(Y) \right\|_1 =
      \lim_{s \rightarrow 0} \left\| T''_s(Y) - T''_0(Y) \right\|_1 = 0$,
      $Y \in L^1(\aN)$. \label{DOIlimit}
    \end{enumerate}
  }

  Observe first that~(\ref{DOIresolution}), (\ref{DOIleftF})
  and~(\ref{DOIrightF}) together with the fact that the mapping~$s
  \mapsto g(\bar F + s \bar X)$ is trace norm continuous, readily
  implies that~$T_s, T'_s, T''_s \in \Bd(\aN, L^1(\aN))$.  In
  particular, the fact that the operator~$T_0 \in \Bd(\aN, L^1(\aN))$
  and~(\ref{DOIdiff}) guarantee that the mapping~$r \mapsto \frac d{dr}
  \left[ h(F_r) \right]$ is trace norm continuous and the right hand
  side of~(\ref{DOIinput}) is well-defined.  Furthermore,
  \begin{multline*}
    \frac 1s \left( h(F_r + sr X) - h(F_r) \right)
    \stackrel{(\ref{DOIperturbation})} = T_s(r X) = \\
    \text{(\ref{DOIresolution}), (\ref{DOIleftF}) and~(\ref{DOIrightF})}
    \quad = g(F_r + srX) \, T'_s \left( rX \right) + T''_s \left( rX
    \right) \, g(F_r) \\ = \left( g(F_r + srX) - g(F_r) \right)\,
    T'_s \left( r X \right) + g(F_r) \, T'_s \left( rX \right) +
    T''_s (rX) \, g(F_r).
  \end{multline*}
  Letting~$s \rightarrow 0$ yields
  \begin{multline*}
    \lim_{s \rightarrow 0} \frac 1s \left( h(F_r + sr X) - h(F_r)
    \right) \stackrel{(\ref{DOIlimit})} = g(F_r)\, T'_0(rX) +
    T''_0(rX)\, g(F_r) \\ \text{(\ref{DOIleftF})
      and~(\ref{DOIrightF})\quad} = T'_0(rX) + T''_0(rX)
    \stackrel{(\ref{DOIresolution})} = T_0(rX),
  \end{multline*}
  where the limit converges in~$L^1(\aN)$.  Finally,
  \begin{multline*}
    \lim_{s \rightarrow 0} \tau_1 \left( \dot F_r\, \frac 1s \left(
        h(F_r + sr X) - h(F_r) \right) \right) \\
    = \tau_1 \left( \dot F_r \, T_0(rX) \right)
    \stackrel{(\ref{DOIduality})} = \tau_1 \left( X T_0(r \dot F_r)
    \right) \\ \stackrel{(\ref{DOIdiff})} = \tau_1 \left( X\, r \, \frac
      d{dr} \left[ h(F_r) \right] \right).
  \end{multline*}
  Thus,~(\ref{DOIinput}) is proved. \end{proof}

Let us now proceed with the proof of Theorem~\ref{thm:SFformulaI}.

\begin{proof}[Proof of Theorem~\ref{thm:SFformulaI}]
  Note that similarly to the proof of Theorem~\ref{SmallLoopsSF}, we may
  assume that~$\supp h \subseteq (-\delta, \delta)$.  

  Let us show that without loss of generality, we may assume that the
  path~$t \mapsto F_t$ is contained in~$\Fred_*^{\pm 1}$.  Indeed,
  consider a $C^2$-function~$\theta$ such that~$\supp \theta'
  \subseteq (-\delta, \delta)$ and~$\theta(\pm \delta) = \pm 1$.  Let
  us introduce the path~$t \mapsto G_t = \theta(F_t)$ and the
  function~$k(x) = K'(x)$, where~$K$ is such that~$H(x) = K(\theta
  (x))$.  Observe that~$G_t \in \Fred_*^{\pm 1}$ and~$\delta_{G_t} = 1$.

  Let us verify that the path~$t \mapsto G_t$ and the function~$k(x)$
  satisfies the assumptions of Theorem~\ref{thm:SFformulaI}.
  (\ref{SFformulaIi})~is clear from $$ 1 = \int_{-\delta}^\delta h(x) \,
  dx = \int_{-\delta}^\delta k(\theta(x))\, \theta'(x)\, dx =
  \int_{-1}^1 k(y)\, dy. $$ For~(\ref{SFformulaIii}), note that,
  (a)~since~$\supp k$ is a proper subinterval of~$(-1, 1)$, the
  function~$t \mapsto k(G_t)$ is bounded with respect to the trace
  norm~$\left\| \cdot \right\|_1$; (b)~according to
  Remark~\ref{OperNormCont}, the path~$t \mapsto G_t$ is $C^1$ with
  respect to the operator norm.  Consequently, the quantity in the
  assumption~(\ref{SFformulaIii}) is finite.  (\ref{SFformulaIiii}) is
  clear, since $$ K(G_j) = H(F_j) \ \ \text{and}\ \ \chi_{[0, +\infty)}
  (G_j) = \chi_{[0, + \infty)}(F_j),\ \ j = 0, 1. $$ Thus, we see that
  the path~$t \mapsto G_t$ and the function~$k(x)$ satisfies the
  assumption of Theorem~\ref{thm:SFformulaI} and~$G_t \in \Fred_*^{\pm
    1}$.

  On the other hand, we also have that $$ sf (F_t) = sf (G_t) \ \
  \text{and}\ \ \tau \left( \dot F_t \, h(F_t) \right) = \tau \left(
    \dot G_t \, k(G_t) \right),\ \ 0 \leq t \leq 1. $$ The former is
  clear from the definition of the spectral flow.  For the latter,
  observe that there is a uniformly bounded family of continuous linear
  operators~$\left\{ T_t \right\}_{0 \leq t \leq 1}$ on~$\aM$ such
  that~$\dot G_t = T_t(\dot F_t)$ (see
  Lemma~\ref{SmallLoopsSFcompletion}.(\ref{sDOIdiff})) such that $$ \tau
  \left( \dot G_t k(G_t) \right) =\tau \left( T_t(\dot F_t) k(\theta (F_t))
  \right)= \tau \left( \dot F_t \theta' (F_t) k (\theta (F_t)) \right) =
  \tau \left( \dot F_t h (F_t) \right), $$ where the second identity is
  due to Lemma~\ref{TraceLemma}\footnote{applied to the function~$$ \phi
    (\lambda, \mu) = k^{\frac 12} (\theta(\lambda))\, \frac
    {\theta(\lambda) - \theta(\mu)}{\lambda - \mu} \, k^{\frac 12}
    (\theta(\mu)). $$}.

  Therefore, for the rest of the proof, we assume that the path~$t
  \mapsto F_t$ is taken from~$\Fred_*^{\pm 1}$ and~$\supp h \subseteq
  (-1, 1)$.

  For every~$t \in [0, 1]$, let~$N_t$ be an open convex set given by $$
  N_t = \left\{ F\in \aM,\ \ \left\| \pi(F - F_t) \right\| < \epsilon_t
  \right\} \subseteq N(F_t), $$ for some~$0 < \epsilon_t < 1$ such
  that~$\supp h \subseteq (-\delta_t, \delta_t)$, where~$\delta_t =
  \min_{F \in N_t} \delta_F$ (one can find such~$N_t$ since the
  mapping~$F \mapsto \delta_F$ is continuous with respect to the
  semi-norm~$\left\| \pi\left( \cdot \right) \right\|$).  The preimages
  of the family~$\left\{ N_t \right\}_{0 \leq t \leq 1}$ under the
  mapping~$t \mapsto F_t$ produce an open covering of~$[0, 1]$.
  Consequently, due to compactness, we can {\it finitely\/} partition
  the segment~$[0, 1]$ by some points $$ 0 = t_0 < t_1 < \ldots < t_n =
  1 $$ such that every segment~$t \in [t_{k-1}, t_k] \mapsto F_t$ of the
  path~$t \mapsto F_t$ lies within the open convex set~$N_k = N_{t_k}
  \subseteq N(F_k)$ and~$\supp h \subseteq (-\delta_k, \delta_k)$,
  where~$\delta_k = \min_{F \in N_k} \delta_{F}$.  Observe also that
  identity~(\ref{SFformulaI}) (which we are proving) is additive with
  respect to partitioning of the path~$t \mapsto F_t$.  Thus, we need
  only to prove this identity for each segment~$[t_{k-1}, t_k]$.  Hence,
  from now on, we shall assume that the path~$t \in [0, 1] \mapsto F_t$
  lies entirely within the convex open set $$N = \left\{ F \in \aM,\ \
    \left\| \pi(F - F_0) \right\| < \epsilon \right\} \subseteq N(F)$$
  for some~$0 < \epsilon < 1$ and that~$\supp h \subseteq (-\delta,
  \delta)$, where~$\delta = \min_{F \in N} \delta_F$.
  
  Let~$B_j = 2\chi_{[0, +\infty)} (F_j) - 1$, $j= 0,1$ be two
  involutions and let~$t \in [0, 1] \mapsto B_t$ be the straight line
  path connecting~$B_0$ and~$B_1$ (i.e., $B_t = (1-t) B_0 + t B_1$).
  Since~$F_j \in \Fred_*^{\pm 1}$, by
  Lemma~\ref{FredholmProperty}.(\ref{TauCompact}), the difference~$F_j -
  B_j$ is $\tau$-compact and therefore~$\pi(F_j - B_j) = 0$.  The latter
  implies that the loop
  \begin{equation*}
    \begin{CD}
      F_0 @>>> F_1 \\
      @AAA @VVV \\
      B_0 @<<< B_1 \\
    \end{CD}
  \end{equation*}
  lies within the set~$N$, where the segment~$B_0 \rightarrow F_0$
  and~$F_1 \rightarrow B_1$ are the straight line paths.  Applying
  Theorem~\ref{SmallLoopsSF} for this loop implies that $$ \int_0^1 \tau
  \left( \dot B_t h (B_t) \right)\, dt = \int_0^1 \tau \left( \dot F_t h
    (F_t)\right)\, dt + \gamma_1 - \gamma_0, $$ where~$\gamma_j$ are the
  integrals along the straight line paths connecting~$F_j$ and~$B_j$,
  i.e., $$ \gamma_j = \int_0^1 \tau \left( \left( F_j - B_j \right)\, h
    \left( (1-t) F_j + t B_j\right) \right)\, dt,\ \ j = 0, 1. $$
  Let us show that 
  \begin{equation}
    \label{commutativeSF}
    H(F_j) - \frac 12 B_j = \int_0^1 (F_j - B_j) \, h\left( (1- t) F_j
      + t \, B_j \right)\, dt, \ \ j=0,1
  \end{equation}
  Observe that every operator in~(\ref{commutativeSF}) is a function
  of~$F_j$.  Moreover, the operators on both sides of this identity
  are supported on the projection~$E_j = \chi_{\supp h} (F_j)$.
  Indeed, on the right hand side, the support is determined by the
  function~$h$, and, on the left hand side, observe that the
  function $$ H - \frac 12 \, \left( \chi_{[0, +\infty)} -
    \chi_{(-\infty, 0)} \right) $$ vanishes outside of the
  support~$\supp h$, which clearly implies that~$H(F_j) -
  \frac 12 B_j$ is supported on~$E_j$.  Thus, we may consider the
  identity on the algebra generated by the operator~$E_j F_j$.  Since
  the projection~$E_j$ is $\tau$-finite, the latter algebra is
  $*$-isomorphic to a subalgebra~$L^\infty(\Rl, d \sigma)$,
  where~$\sigma(\Delta) = \tau \left( \chi_\Delta(F_j)\, E_j \right)$,
  $\Delta \subseteq \Rl$.

  In the setting of the algebra~$L^\infty(\Rl, d \sigma)$,
  identity~(\ref{commutativeSF}) holds a.e. due to the Newton-Leibniz
  theorem and the integral converges with respect to the ultra-weak
  topology.  This, in particular, implies that~$H(F_j) - \frac 12 B_j
  \in L^1(\Rl, d \sigma) \subseteq L^1(\aM)$.  Taking trace~$\tau$ from
  the latter identity gives $$ \gamma_j = \tau\left( H(F_j) - \frac 12
    B_j \right). $$

  Observing that from the definition of the spectral flow, it follows
  that~$sf(F_t) = sf (B_t)$, it is clear that to finish the proof we
  need only to show now that
  \begin{equation}
    \label{LastArgument}
    sf(B_t) = \int_0^1 \tau \left( \dot B_t \, h(B_t) \right)\, dt.
  \end{equation}
  The argument establishing~(\ref{LastArgument}) is similar
  to~\cite[Proposition~4.3]{Wahl}.  Observe, that the path~$B_t$
  consists of invertible operators excepting the point~$B_{\frac 12}$.
  Observe also that the operator~$B^2_{\frac 12}$ commutes with the
  every~$B_t$, $t \in [0, 1]$.  Let~$\delta_1$ be such that $0 <
  \delta_1 < \delta$ and~$\supp h \subseteq [-\delta_1,\delta_1]$.
  Since~$B_{\frac 12} \in N$, the projection~$E = \chi_{[0,
    {\delta_1^2}]} \left( B^2_{\frac 12} \right)$ is $\tau$-finite.
  Moreover, the projection~$E$ commutes with every~$B_t$, $t \in [0,
  1]$.  Let us decompose the path~$t \mapsto B_t$ into the direct sum of
  two paths
  \begin{equation}
    \label{PathDecomposition}
    t \mapsto E B_t E \ \ \text{and}\ \ t \mapsto (1- E) \, B_t\, (1 - E).
  \end{equation}
  Observe now that the second path in the latter decomposition consists
  of invertible operators (in the algebra~$(1- E)\, \aM \, (1 - E)$) and
  therefore the spectral flow vanishes on this path
  (see~\cite[Remark~2.3]{Phillips-SF2-1997}).  On the other hand, due to
  the choice of~$\delta_1$ and the projection~$E$, the spectrum of~$(1 -
  E) B_{\frac 12} (1 - E)$ lies outside of the interval~$[- \delta_1,
  \delta_1]$.  Furthermore, it is easy to see the inequality~$B^2_{t}
  \geq B^2_{\frac 12}$, which means that the statement about the
  spectrum of~$B_{\frac 12}$ above is equally valid for every~$(1 - E)\,
  B_t \, (1 - E)$.  Thus, we see that
  $$ h\left( (1 - E) B_{t} (1 - E) \right) = 0,\ \ t \in [0, 1] $$ and
  therefore the integral in~(\ref{LastArgument}) also vanishes on the
  second path in the decomposition~(\ref{PathDecomposition}).

  Identity~(\ref{LastArgument}) is additive with respect to
  direct sums.  Consequently, we need to prove~(\ref{LastArgument}) only
  for the path~$t \mapsto E B_t E$.  Regarding the latter path as a path
  in a finite algebra~$E\aM E$, the identity follows
  from~\cite[\S~5.1]{BCPRSW}.
\end{proof}
It now follows from the arguments in this Section that we have also proved the following result.
\begin{corollary}
Let $\Fred_\delta$ be the set of self adjoint $\tau$-Fredholm operators
in $\mathcal N$ whose essential spectrum does not intersect
the interval $[-\delta,\delta]$. This is an open submanifold of the Banach manifold of all self adjoint $\tau$-Fredholm operators.
For $h$ as in Theorem \ref{thm:SFformulaI} the one form $\theta$ on $\Fred_\delta$,
given by defining for each $F\in \Fred_\delta$ the functional 
$\theta_F$
on the tangent space to $\Fred_\delta$ at $F$ by $X\to\tau(Xh(F),$ is closed. Spectral flow along any piecewise $C^1$ path in $\Fred_\delta$ may be
interpreted as being obtained by integrating this one form.
\end{corollary}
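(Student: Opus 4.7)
The plan is to unpack the three assertions in the corollary and show that each follows from material already established in Section~\ref{sec:spectr-flow-form}. Openness of $\Fred_\delta$ is immediate: the map $F \mapsto \delta_F$ was noted to be continuous on the Banach manifold of $\tau$-Fredholm operators, and since the essential spectrum of $F$ is the spectrum of $\pi(F)$, the condition $\Fred_\delta = \{F \in \Fred_* : \delta_F > \delta\}$ intersected with the self-adjoint part of $\aM$ defines an open submanifold whose tangent space at each $F$ is the self-adjoint part of $\aM$. Furthermore, Lemma~\ref{FredholmProperty}.(\ref{FredholmPropertyBdd}) shows that $h(F) \in L^1(\aM)$ locally uniformly in $F$, so $\theta_F(X) = \tau(Xh(F))$ is a well-defined continuous linear functional on the tangent space.

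The key step is closedness. For each $F_0 \in \Fred_\delta$, the argument carried out in the proof of Theorem~\ref{SmallLoopsSF} around~(\ref{LastObjective}) already constructs a convex open neighbourhood $N \subseteq \Fred_\delta$ of $F_0$ and an explicit primitive $\Theta : N \to \Cx$ satisfying $d\Theta_F(X) = \tau(Xh(F))$ for every tangent vector $X$. Hence $\theta$ is locally exact on $\Fred_\delta$ and therefore closed.

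Finally, for the interpretation of spectral flow, given a piecewise $C^1$ path $t \mapsto F_t$ in $\Fred_\delta$, the partitioning argument from the proof of Theorem~\ref{thm:SFformulaI} yields a subdivision $0 = t_0 < \cdots < t_n = 1$ such that each segment $F_t|_{[t_{k-1},t_k]}$ lies inside one such convex neighbourhood $N_k$ with primitive $\Theta_k$. The fundamental theorem of calculus on each segment gives
\begin{equation*}
\int_0^1 \tau\bigl(\dot F_t\, h(F_t)\bigr)\, dt = \sum_{k=1}^n \bigl(\Theta_k(F_{t_k}) - \Theta_k(F_{t_{k-1}})\bigr),
\end{equation*}
which by definition is the line integral of the closed form $\theta$ along the path. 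Combining this identity with Theorem~\ref{thm:SFformulaI} applied segment by segment — the endpoint corrections $\tau(H(F_1) - H(F_0) + \tfrac12 B_0 - \tfrac12 B_1)$ encoding the topological discrepancy between the Calkin phases at the two endpoints — identifies $sf(F_t)$ with the integration of $\theta$, as claimed. The only nontrivial ingredient in the entire argument is the local exactness of $\theta$, and this is precisely what the DOI-based construction in Theorem~\ref{SmallLoopsSF} already provides, so no further technical work is needed.
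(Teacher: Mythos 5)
Your proposal is correct and follows precisely the route the paper itself intends: the authors give no separate proof of this corollary beyond the remark that ``it now follows from the arguments in this Section,'' and your write-up is a faithful unpacking of those arguments --- openness from the continuity of $F\mapsto\delta_F$, well-definedness and local boundedness of $\theta_F$ from Lemma~\ref{FredholmProperty}.(\ref{FredholmPropertyBdd}), closedness (local exactness) from the explicit primitive and the computation of $d\theta_F$ in the proof of Theorem~\ref{SmallLoopsSF}, and the spectral-flow identification from the partitioning and fundamental-theorem argument underlying Theorem~\ref{thm:SFformulaI}. You also rightly flag the endpoint correction $\tau(H(F_1)-H(F_0)+\tfrac12 B_0-\tfrac12 B_1)$, which is the only sense in which ``integrating the one form'' must be read with care; this matches the paper's intent.
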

\section{Spectral flow formula, unbounded case.}
\label{sec:integr-form-spectr}

We shall now discuss analytic formulae for paths of unbounded
self adjoint linear operators.  Across this Section,~$t \in [0, 1]
\mapsto D_t$ stands for a path of unbounded self adjoint linear
operators affiliated\footnote{Recall that a linear operator~$D: \dom(D)
  \mapsto \sH$ is called {\it affiliated with\/} a von Neumann
  algebra~$\aM$ if and only if~$u(\dom (D)) \subseteq \dom(D)$
  and~$u^*Du = D$ for every~$u \in \aM'$.} with~$\aM$.  In order to to
be able to compute the spectral flow of this path we assume that the
path~$t \in [0, 1] \mapsto F_t = \vartheta (D_t)$ is a continuous path
of $\tau$-Fredholm operators, where
\begin{equation}
  \label{VTfunction}
  \vartheta (x) = \frac x{(1 + x^2)^{\frac 12}}. 
\end{equation}
In this case, by definition, we set~$sf(D_t) = sf(F_t)$.  Furthermore,
to be able to consider analytic formulae for the spectral flow of the
path~$D_t$, we shall also impose a smoothness assumption onto~$D_t$.
Namely, the following definition is in order.

\begin{definition}
  \label{GammaSmoothness}
  \begin{enumerate}
  \item A path~$t\in [0, 1] \mapsto D_t$ is called
    $\Gamma$-differentiable at the point~$t = t_0$ if and only if there
    is a bounded linear operator~$G$ such that $$ \lim_{t \rightarrow
      t_0} \left\| \frac {D_t - D_{t_0}} t (1 + D_{t_0}^2)^{- \frac 12}
      - G \right\| = 0. $$ In this case, we set~$\dot D_{t_0} = G \, ( 1
    + D^2_{t_0})^{\frac 12}$.  The operator~$\dot D_t$ is a symmetric
    linear operator with the domain~$\dom (D_t)$ (see
    Lemma~\ref{LDclosedness} below).
  \item If the mapping~$t \mapsto \dot D_t (1 + D_t^2)^{- \frac 12}$ is
    defined and continuous with respect to the operator norm, then we
    call the path~$t \mapsto D_t$ continuously $\Gamma$-differentiable
    or~$C^1_\Gamma$-path\footnote{It may be shown that the class of all
      $C^1_\Gamma$-paths is the class of all paths which are
      continuously differentiable with respect to the graph norm of some
      fixed operator on this path (see~\cite{Wahl}).  We, however, will
      not use this connection below}.
  \end{enumerate}
\end{definition}

The main analytic spectral flow formula in the unbounded case is given
by the following theorem.

\begin{theorem}
  \label{thm:SFunbdd}
  Let~$t \in [0, 1] \mapsto D_t$ be a piecewise $C^1_\Gamma$-path of
  linear operators and~$\vartheta (D_t) \in \Fred_*^{\pm 1}$.  If~$g :
  \Rl \mapsto \Rl$ is a positive $C^2$-function such that
  \begin{enumerate}
  \item $\int_{-\infty}^{+\infty} g(x) \, dx = 1$;
  \item $\int_0^1 \left\| \dot D_t g(D_t) \right\|_1 \, dt < + \infty$;
  \item $G(F_1) - \frac 12 B_1 - G(F_0) + \frac 12 B_0 \in L^1(\aM)$,
    where~$B_j$ are the phases of\/~$D_j$, $j=0,1$, i.e., $B_j = 2
    \chi_{[0, + \infty)} (D_j) - 1$, and~$G$ is the antiderivative
    of~$g$ such that~$G(\pm \infty) = \pm \frac 12$;
  \end{enumerate}
  then $$ sf(D_t) = \int_0^1 \tau \left( \dot D_t\, g(D_t) \right)\, dt
  + \tau \left( G(D_1) - \frac 12 B_1 - G(D_0) + \frac 12 B_0 \right).
  $$
\end{theorem}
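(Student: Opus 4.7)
The plan is to reduce the unbounded formula to the bounded formula of Theorem~\ref{thm:SFformulaI}, applied to the induced path $F_t = \vartheta(D_t)$. By hypothesis $F_t \in \Fred_*^{\pm 1}$, so $\delta_{F_t} = 1$, and by definition $sf(D_t) = sf(F_t)$. The first preliminary step is to verify that $t \mapsto F_t$ is piecewise $C^1$ with respect to the operator norm on $\aM$. This is the content of the differentiability results of Section~\ref{sec:paths-self-adjoint}: under the $C^1_\Gamma$-assumption on $D_t$, a double operator integral representation for $\vartheta(D_t) - \vartheta(D_s)$ provides a uniformly bounded family $\{T_t\}_{0 \leq t \leq 1}$ of operators on $\aM$ such that $\dot F_t = T_t\bigl( \dot D_t (1 + D_t^2)^{-\frac 12} \bigr)$, from which operator norm $C^1$-continuity of $F_t$ follows.

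Next, I translate the function $g$ on $\Rl$ into a function $h$ on $[-1, 1]$ suited for Theorem~\ref{thm:SFformulaI}. Define $H(y) = G(\vartheta^{-1}(y))$ for $y \in (-1, 1)$ and $H(\pm 1) = \pm \frac 12$, and set $h = H'$. Then $h$ is positive and $C^2$ on $(-1, 1)$; the substitution $y = \vartheta(x)$ gives
\[
  \int_{-1}^{1} h(y)\, dy = \int_{-\infty}^{+\infty} g(x)\, dx = 1,
\]
and, since $\vartheta$ is odd and strictly increasing, $H(F_j) = G(D_j)$ and $\chi_{[0, +\infty)}(F_j) = \chi_{[0, +\infty)}(D_j)$, so hypothesis~(iii) of Theorem~\ref{thm:SFformulaI} is immediate from hypothesis~(iii) of Theorem~\ref{thm:SFunbdd}.

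The crux of the argument is the identity
\begin{equation}
  \label{PlanTraceEq}
  \tau\bigl( \dot F_t\, h(F_t) \bigr) = \tau\bigl( \dot D_t\, g(D_t) \bigr), \qquad t \in [0, 1],
\end{equation}
together with the corresponding integrability $\int_0^1 \| \dot F_t\, h(F_t) \|_1 \, dt < +\infty$, which delivers hypothesis~(ii) of Theorem~\ref{thm:SFformulaI} from hypothesis~(ii) of Theorem~\ref{thm:SFunbdd}. Expressing the derivative via a double operator integral with respect to the spectral measure of $D_t$,
\[
  \dot F_t = \int\!\!\int \frac{\vartheta(\lambda) - \vartheta(\mu)}{\lambda - \mu} \, dE^{D_t}(\lambda)\, \dot D_t\, dE^{D_t}(\mu),
\]
and using $h(F_t) = h(\vartheta(D_t))$, an application of Lemma~\ref{TraceLemma} to the symbol
\[
  \phi(\lambda, \mu) = h^{\frac 12}(\vartheta(\lambda))\, \frac{\vartheta(\lambda) - \vartheta(\mu)}{\lambda - \mu}\, h^{\frac 12}(\vartheta(\mu))
\]
reduces the trace on the left of~(\ref{PlanTraceEq}) to a trace involving only the diagonal value $\phi(\lambda, \lambda) = h(\vartheta(\lambda))\, \vartheta'(\lambda) = g(\lambda)$, delivering the right hand side.

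With all hypotheses of Theorem~\ref{thm:SFformulaI} now verified for the pair $(F_t, h)$, applying that theorem produces
\[
  sf(F_t) = \int_0^1 \tau\bigl( \dot F_t\, h(F_t) \bigr)\, dt + \tau \bigl( H(F_1) - H(F_0) + \tfrac 12 B_0 - \tfrac 12 B_1 \bigr),
\]
and substituting $sf(F_t) = sf(D_t)$, $H(F_j) = G(D_j)$, and~(\ref{PlanTraceEq}) yields the claim. Should $h$ fail to be $C^2$ at the endpoints $\pm 1$ (e.g.\ when $g$ decays only polynomially so that the factor $(1+x^2)^{3/2}$ coming from $1/\vartheta'$ spoils smoothness there), the same approximation argument alluded to in the remark following Theorem~\ref{thm:SFformulaI} disposes of the minor technicality. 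The principal obstacle is the identity~(\ref{PlanTraceEq}): one must justify the DOI representation of $\dot F_t$ under only $\Gamma$-differentiability of $D_t$, working throughout with the bounded quantity $\dot D_t (1 + D_t^2)^{-\frac 12}$ since $\dot D_t$ itself is unbounded. This is precisely the satisfactory resolution of the differentiability question promised in the introduction and developed in Sections~5 and~6.
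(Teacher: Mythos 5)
Your proposal follows essentially the same route as the paper: reduce to Theorem~\ref{thm:SFformulaI} via $F_t = \vartheta(D_t)$, invoke the Section~6 differentiability results (Theorem~\ref{ContDiffTheorem}) to get $C^1$-ness of $F_t$ in operator norm, pull $g$ back to $h$ with $h(\vartheta(x))\vartheta'(x) = g(x)$, and establish the trace identity $\tau(\dot F_t\, h(F_t)) = \tau(\dot D_t\, g(D_t))$ via Lemma~\ref{TraceLemma} with the symbol $\phi(\lambda,\mu) = h^{1/2}(\vartheta(\lambda))\, \frac{\vartheta(\lambda)-\vartheta(\mu)}{\lambda-\mu}\, h^{1/2}(\vartheta(\mu))$ — the exact analogue of the symbol used in the bounded proof.

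However, there is a logical imprecision worth flagging. You claim that the trace-norm integrability $\int_0^1 \|\dot F_t\, h(F_t)\|_1\, dt < +\infty$ (hypothesis (\ref{SFformulaIii}) of Theorem~\ref{thm:SFformulaI}) ``delivers'' from hypothesis (ii) of the present theorem ``together with'' the trace identity~(\ref{PlanTraceEq}). But~(\ref{PlanTraceEq}) is an equality of scalar traces, not of trace norms, and does not control $\|\dot F_t\, h(F_t)\|_1$ in terms of $\|\dot D_t\, g(D_t)\|_1$. Moreover, you defer the reduction to compactly supported $g$ to the end of the argument, whereas both Lemma~\ref{TraceLemma} (which requires the support projection $E = \chi_I(D_t)$ to be $\tau$-finite, and hence needs $\supp\phi$ in a bounded square) and Lemma~\ref{FredholmContArgument} (which is what actually supplies the trace-norm bound, once $\supp h$ is a proper subinterval of $(-1,1)$) presuppose that reduction. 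The paper therefore performs the approximation first — after which hypothesis (\ref{SFformulaIii}) of Theorem~\ref{thm:SFformulaI} follows directly from Lemma~\ref{FredholmContArgument}, and hypothesis (ii) of the present theorem is used only to justify the passage from general $g$ to compactly supported $g_n$. Your argument is salvaged by reordering: do the approximation at the outset, then the integrability and Lemma~\ref{TraceLemma} applications are legitimate; as written, the dependency runs backwards. Relatedly, the display you write for $\dot F_t$ with $\dot D_t$ inside the double integral is only formal (since $\dot D_t$ is unbounded), though you do correctly note that the rigorous statement works with $\dot D_t(1+D_t^2)^{-1/2}$ via the $L(D_0)$ construction of Section~6.
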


Since the spectral flow for a path of unbounded linear operators is
defined by the spectral flow of the corresponding path of bounded
operators (via the mapping~$D \mapsto \vartheta (D)$), the proof of the
theorem above is based on a reduction to the ``bounded'' spectral flow
formula given by Theorem~\ref{thm:SFformulaI}.  In this reduction the
main part is the question whether the path~$t \mapsto \vartheta(D_t)$
is~$C^1$ in the operator norm provided the path~$t \mapsto D_t$
is~$C^1_\Gamma$.  The latter question has been left open in~\cite{Wahl}
(see~p.~21).  We shall resolve this problem in
Theorem~\ref{ContDiffTheorem} below.

\begin{proof}[Proof of Theorem~\ref{thm:SFunbdd}]
  As in the proof of Theorem~\ref{thm:SFformulaI}, we may assume that
  the function~$g$ is compactly supported (see
  footnote~\ref{ToProperSubinterval} on
  p.~\pageref{ToProperSubinterval}).  Let~$h$ be a function such that $$
  g(x) = \frac {h(\vartheta (x))}{(1 + x^2)^{\frac 32}} $$ and let~$F_t
  = \vartheta (D_t)$.  The function~$h$ is a $C^2$-function supported on
  a proper subinterval of~$[-1, 1]$ and~$F_t \in \Fred_*^{\pm 1}$ by
  assumption.

  Let us verify that the path~$t \in [0, 1] \mapsto F_t \in \Fred_*^{\pm
    1}$ and the function~$h$ satisfies the hypothesis of
  Theorem~\ref{thm:SFformulaI}.  
  \begin{enumerate}
  \item Due to Theorem~\ref{ContDiffTheorem}, the mapping~$t \mapsto
    F_t$ is piecewise $C^1$.
  \item Observe that $$ \int_{-1}^1 h(\vartheta)\, d \vartheta =
    \int_{-\infty}^{+\infty} h(\vartheta (x))\, \vartheta ' (x)\, dx =
    \int_{-\infty}^{+\infty} g(x)\, dx = 1. $$
  \item Since~$g$ is compactly supported, the function~$h$ is supported
    on a proper subinterval of~$[-1, 1]$ and therefore the mapping~$t
    \mapsto h(F_t)$ is continuous in the trace norm (see
    Lemma~\ref{FredholmContArgument}).  In particular, $$ \int_0^1
    \left\| \dot F_t h(F_t) \right\|_1 \, dt < + \infty. $$
  \end{enumerate}
  Applying Theorem~\ref{thm:SFformulaI}, we readily obtain that $$
  sf(D_t) = \int_0^1 \tau\left( \dot F_t\, h(F_t) \right)\, dt + \tau
  \left( H(F_1) - H(F_0) - \frac 12 B_1 + \frac 12 B_0 \right). $$ Note
  that if~$H(x)$ is the antiderivative of~$h(x)$ such that~$H(\pm 1) =
  \pm \frac 12$, then~$H(\vartheta (x)) = G(x)$.  Consequently, $$
  H(F_j) = G(D_j),\ \ j = 0, 1 $$ and $$ \tau \left( \dot F_t \, h(F_t)
  \right) = \tau\left( \dot D_t \theta'(D_t)\, h(\vartheta (D_t))
  \right) = \tau \left( \dot D_t \, g(D_t) \right),\ \ t \in [0, 1]. $$
  The theorem is proved.
\end{proof}

Before we consider applications of Theorem~\ref{thm:SFunbdd}, let us
note that applying the argument of the proof above to
Theorem~\ref{SmallLoopsSF}, we obtain the answer to Singer's
question in the form framed for elliptic operators on compact manifolds. 

\begin{theorem}
  \label{SingerAnswer}
  If~$D$ is a self-adjoint linear operator with $\tau$-compact resolvent
  and~$g$ is a $C^2$-function, then the one form~$\tau \left( V g(D)
  \right)$ is exact on the affine space of all $D$-bounded
  perturbations~$V$ such that~$V g(D) \in L^1(\aM)$.  In other words,
  if~$t \in [ 0, 1] \mapsto D_t$ is a $C^1_\Gamma$-loop ($D_0 = D_1$) of
  unbounded self-adjoint linear operators with $\tau$-compact resolvent
  such that $$ \int_0^1 \left\| \dot D_t g(D_t) \right\|_1\, dt $$ is
  finite, then $$ \int_0^1 \tau \left( \dot D_t \, g(D_t) \right)\, dt =
  0. $$
\end{theorem}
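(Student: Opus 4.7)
My plan is to derive Theorem~\ref{SingerAnswer} from Theorem~\ref{thm:SFunbdd} by showing that the spectral flow along the loop $D_t$ vanishes; the integral identity then follows because the boundary terms in the spectral flow formula trivially vanish under $D_0 = D_1$.

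First, I observe that the loop $D_t$ lies in the convex affine space $\sE := \{D_0 + V : V\ \text{is}\ D_0\text{-bounded}\}$: indeed, by~\cite{Wahl} (see also the footnote to Definition~\ref{GammaSmoothness}), every $C^1_\Gamma$-path is $C^1$ with respect to the graph norm of some fixed operator on the path, so in particular $D_t - D_0$ is $D_0$-bounded uniformly in $t$. Every operator in $\sE$ has $\tau$-compact resolvent by~\cite[Appendix~B, Lemma~7]{CarPhi1998-MR1638603}, hence $F_t := \vartheta(D_t) \in \Fred_*^{\pm 1}$, and $t \mapsto F_t$ is piecewise $C^1$ in the operator norm by Theorem~\ref{ContDiffTheorem}. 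The straight-line homotopy $D_{t,s} := (1-s) D_0 + s D_t$ stays in $\sE$ and fixes the basepoint $D_0$ at $t = 0, 1$ (using $D_0 = D_1$); transferring through $\vartheta$ yields a jointly operator-norm-continuous homotopy $(t, s) \mapsto F_{t,s}$ in $\Fred_*^{\pm 1}$ from the constant loop at $F_0$ to the loop $F_t$. Homotopy invariance of spectral flow (see~\cite{Phillips-SF2-1997, BCPRSW}) then gives $sf(D_t) = sf(F_t) = 0$.

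Second, I apply Theorem~\ref{thm:SFunbdd} directly to $D_t$. Hypotheses~(i) and~(ii) on $g$ are given by the statement of Theorem~\ref{SingerAnswer}, while hypothesis~(iii) is trivial because $D_0 = D_1$ forces $G(D_1) - G(D_0) = 0$ and $B_1 - B_0 = 0$. The spectral flow formula then reads
$$ 0 = sf(D_t) = \int_0^1 \tau \left( \dot D_t\, g(D_t) \right)\, dt + 0, $$
establishing the loop-integral identity. The ``exact one form'' formulation follows because $\sE$ is convex, hence simply connected, so vanishing of the line integral over every loop is equivalent to exactness of the one form; an explicit antiderivative is furnished by $\Theta(D_0 + V) := \int_0^1 \tau \left( V\, g(D_0 + rV) \right)\, dr$.

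The main technical point requiring verification is the \emph{joint} continuity of the homotopy $(t, s) \mapsto F_{t,s}$ in the operator norm, which must be extracted from the uniformity in $s$ of the estimates underlying Theorem~\ref{ContDiffTheorem}. Alternatively, closer to the paper's suggestion to ``apply the argument of the proof above to Theorem~\ref{SmallLoopsSF}'', one may replicate the DOI-based antiderivative construction from the proof of Theorem~\ref{SmallLoopsSF} directly in the unbounded $C^1_\Gamma$-setting via the change of variables $g(x) = h(\vartheta(x))/(1 + x^2)^{3/2}$, thereby proving exactness head-on without relying on homotopy invariance.
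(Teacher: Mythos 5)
Your proposal contains two routes, and they have quite different status. The \emph{alternative} route you sketch at the end is, in fact, the paper's own proof: the paper disposes of Theorem~\ref{SingerAnswer} in a single sentence preceding it, saying it is obtained ``by applying the argument of the proof above [i.e., the change of variables $g(x) = h(\vartheta(x))(1+x^2)^{-3/2}$, $D_t \mapsto F_t = \vartheta(D_t)$] to Theorem~\ref{SmallLoopsSF}.'' That is precisely what you propose in your last paragraph, and it is the right move because the exactness argument in the proof of Theorem~\ref{SmallLoopsSF} is entirely a DOI argument that never uses the spectral-flow interpretation and hence imposes no normalisation or positivity on $h$; the explicit antiderivative $\Theta(D_0+V)=\int_0^1 \tau(V\,g(D_0+rV))\,dr$ you name is the correct unbounded analogue of the $\theta$ constructed there.

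Your \emph{main} route has a genuine gap that does not get patched by the homotopy argument. You invoke Theorem~\ref{thm:SFunbdd}, but its hypotheses require $g$ to be a \emph{positive} $C^2$-function with $\int_{\Rl} g = 1$, whereas Theorem~\ref{SingerAnswer} assumes only $g\in C^2$. Your sentence ``Hypotheses~(i) and~(ii) on $g$ are given by the statement of Theorem~\ref{SingerAnswer}'' is incorrect for (i): the normalisation $\int g = 1$ is not assumed, and neither is positivity. For a positive $g$ you could scale, but for a general $C^2$ function you cannot write $g = g_+ - g_-$ with both halves in $C^2$ satisfying the trace-class hypothesis, and the degenerate case $\int g = 0$ is entirely outside the scope of the spectral flow formula. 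So the reduction to Theorem~\ref{thm:SFunbdd} simply does not prove the statement as written. This is exactly why the paper goes through Theorem~\ref{SmallLoopsSF}: the local-exactness argument is linear in $h$ and never sees the sign or integral of $h$, whereas the spectral flow formula of Theorems~\ref{thm:SFformulaI} and~\ref{thm:SFunbdd} genuinely needs the normalisation to pin down the integer $sf$.

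There is also the secondary gap you flag yourself: to conclude $sf(F_t)=0$ via the straight-line homotopy $D_{t,s}=D_0+s(D_t-D_0)$ you need joint operator-norm continuity of $(t,s)\mapsto \vartheta(D_{t,s})$. This is plausible using the uniform bounds behind~(\ref{BasicIdea}) and the equivalence of graph norms along the homotopy square (chaining~(\ref{Dxequiv}) in finitely many small steps, since $\|D_t-D_0\|_{D_0}$ is merely bounded, not small), but you do not carry it out, and the comparison of graph norms across the full square is not a one-line corollary of Theorem~\ref{ContDiffTheorem}. The alternative route sidesteps this as well, because the DOI antiderivative argument only ever compares $D$ and $D_0+rV$ along a single straight line, which is already covered by Lemma~\ref{ToSymLemma} and Lemma~\ref{AuxilliaryEsts}. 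In short: adopt your alternative route as the actual proof, and demote the homotopy-invariance argument to a remark applicable only for positive normalised $g$.
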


By choosing specific functions~$g$, Theorem~\ref{thm:SFunbdd} above
allows a number of important corollaries.  We shall state only two of
them in Theorems~\ref{ThetaSumCorl} and~\ref{PSumCorl} below.  These
theorems extends \cite[Propositions~6.7 and~6.9]{Wahl} and earlier
results of~\cite{CarPhi2004-MR2053481, CarPhi1998-MR1638603}.

\begin{theorem}
  \label{ThetaSumCorl}
  If\/~$t \in [0, 1] \mapsto D_t$ is a piecewise $C^1_\Gamma$-path of
  unbounded linear operators such that~$D_t$ is
  $\theta$-summable\footnote{A self adjoint operator~$D$ is called
    $\theta$-summable if and only if~$e^{-\epsilon D^2} \in L^1(\aM)$
    for every~$\epsilon > 0$.}, $t \in [0, 1]$ and $$ \int_0^1 \left\|
    \dot D_t e^{- \epsilon D_t^2}\, \right\|_1 \, dt < + \infty,\ \
  \epsilon > 0, $$ then
  \begin{multline*}
    sf(D_0, D_1) = \sqrt {\frac \epsilon \pi} \int_0^1 \tau \left( \dot
      D_t \, e^{- \epsilon D_t^2} \right)\, dt \\ + \tau \left( G(D_1) -
      \frac 12 B_1 - G(D_0) + \frac 12 B_0 \right),\ \ \epsilon > 0,
  \end{multline*}
  where $$ G(x) = \sqrt {\frac \epsilon \pi} \int_{-\infty}^{x}
  e^{-\epsilon t^2} \, dt - \frac 12. $$
\end{theorem}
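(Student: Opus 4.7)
The plan is to apply Theorem~\ref{thm:SFunbdd} with the specific Gaussian weight
$$
g(x) = \sqrt{\tfrac{\epsilon}{\pi}}\, e^{-\epsilon x^2},
$$
whose antiderivative with $G(\pm\infty)=\pm\tfrac12$ is exactly the function $G$ in the statement. The function $g$ is a positive $C^\infty$ (hence $C^2$) function with $\int_{\mathbb R} g(x)\,dx = 1$, so hypothesis~(i) of Theorem~\ref{thm:SFunbdd} is immediate.

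Next I would verify that each $D_t$ satisfies $\vartheta(D_t)\in\Fred_*^{\pm 1}$. From $\theta$-summability, $e^{-\epsilon D_t^2}\in L^1(\aM)$ for every $\epsilon>0$, and since $(1+D_t^2)^{-1}$ is dominated by a constant multiple of $e^{-\epsilon D_t^2}$ outside a bounded spectral region (with a $\tau$-finite complementary projection), $(1+D_t^2)^{-1}$ is $\tau$-compact. Therefore $1-\vartheta(D_t)^2=(1+D_t^2)^{-1}$ is $\tau$-compact and $\|\vartheta(D_t)\|\le 1$, placing $\vartheta(D_t)$ in $\Fred_*^{\pm 1}$. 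Hypothesis~(ii) of Theorem~\ref{thm:SFunbdd} is precisely the assumption $\int_0^1\|\dot D_t\, e^{-\epsilon D_t^2}\|_1\,dt<+\infty$ of the present theorem (up to the harmless constant $\sqrt{\epsilon/\pi}$).

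The main point to check is hypothesis~(iii), namely that
$$
G(D_1)-\tfrac12 B_1 - G(D_0)+\tfrac12 B_0 \in L^1(\aM).
$$
Here I would use the elementary bound from the Mills-ratio estimate for the Gaussian tail: there exists $C_\epsilon>0$ such that
$$
\bigl|G(x)-\tfrac12\operatorname{sgn}(x)\bigr| \le C_\epsilon\, e^{-\epsilon x^2},\qquad x\in\mathbb R,
$$
(with the value at $x=0$ being $\tfrac12$, still finite). By functional calculus, $|G(D_j)-\tfrac12 B_j|\le C_\epsilon\, e^{-\epsilon D_j^2}$, and the right-hand side belongs to $L^1(\aM)$ by $\theta$-summability. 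Hence $G(D_j)-\tfrac12 B_j\in L^1(\aM)$ for $j=0,1$, giving~(iii).

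With all three hypotheses verified, Theorem~\ref{thm:SFunbdd} yields the claimed formula directly, the constant $\sqrt{\epsilon/\pi}$ being absorbed into $g$ and thus appearing in front of the integral. The only subtle point is the trace-norm integrability/tail estimate in hypothesis~(iii); everything else is a routine change of constants. The anticipated obstacle is therefore mild and purely technical, amounting to the pointwise Gaussian tail bound on $G(x)-\tfrac12\operatorname{sgn}(x)$ together with the $\theta$-summability of the endpoints $D_0,D_1$.
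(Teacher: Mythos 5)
Your proposal follows the paper's own proof: both specialize Theorem~\ref{thm:SFunbdd} to the Gaussian $g(x) = \sqrt{\epsilon/\pi}\,e^{-\epsilon x^2}$ and then check the three hypotheses. However, your justification that $\vartheta(D_t)\in\Fred_*^{\pm 1}$ contains a reversed inequality. You assert that $(1+D_t^2)^{-1}$ is dominated by a constant multiple of $e^{-\epsilon D_t^2}$ outside a bounded spectral region; but for $|x|$ large the function $(1+x^2)^{-1}$ decays only polynomially while $e^{-\epsilon x^2}$ decays super-exponentially, so in fact $(1+x^2)^{-1}\gg e^{-\epsilon x^2}$ there. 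The domination that actually yields $\tau$-finiteness runs the other way: $e^{-\epsilon n^2}\,\chi_{[-n,n]}(x)\le e^{-\epsilon x^2}$ shows that each spectral projection $\chi_{[-n,n]}(D_t)$ is $\tau$-finite; from this one concludes $\tau$-compactness of $(1+D_t^2)^{-1}$ (a bounded function of $D_t$ vanishing at infinity), or, as the paper argues, that $\vartheta$ maps compactly supported indicators to indicators of proper subintervals of $[-1,1]$, so that every such spectral projection of $F_t=\vartheta(D_t)$ is $\tau$-finite and $F_t\in\Fred_*^{\pm 1}$. Your verification of hypothesis~(iii) via the Gaussian tail (Mills ratio) estimate $\bigl|G(x)-\tfrac12\,\mathrm{sgn}(x)\bigr|\le C_\epsilon\,e^{-\epsilon x^2}$ is correct and in fact slightly sharper than the paper's, which settles for an $e^{-\epsilon x^2/2}$ bound; either suffices since $\theta$-summability is assumed for every $\epsilon>0$.
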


\begin{proof}
  The proof is specialization of Theorem~\ref{thm:SFunbdd} to the
  case~$g(x) = \sqrt {\frac \epsilon \pi} e^{- \epsilon x^2}$, provided
  we have checked that
  \begin{equation}
    \label{ThetaSumTmp}
    \vartheta (D_t) \in \Fred_*^{\pm 1},\  t \in [0, 1] \ \ \text{and}\
    \  G(D_j) - \frac 12 B_j \in L^1(\aM),\  j = 0,1.
  \end{equation}

  For the first statement in~(\ref{ThetaSumTmp}), observe that $$
  e^{-\epsilon n^2}\, \chi_{[-n, n]}(x) \leq e^{-\epsilon x^2},\ \ x \in
  \Rl,\ n \geq 1, $$ which means that every projection~$\chi_{[-n,
    n]}(D_t)$ is $\tau$-finite.  Furthermore, note also that, under the
  mapping~$x \mapsto \vartheta (x)$, compactly supported indicator
  functions are mapped onto indicators of proper subintervals of~$[-1,
  1]$.  Thus, we see that if~$F_t = \vartheta (D_t)$, then every
  projection~$\chi (F_t)$ is $\tau$-finite where~$\chi$ is an indicator
  of a proper subinterval of~$[-1, 1]$.  Consequently,~$F_t \in
  \Fred_*^{\pm 1}$.

  For the second statement in~(\ref{ThetaSumTmp}), let us consider the
  function 
  \begin{equation}
    \label{ThetaSumTmpF}
    f(x) = G(x) - \chi_{[0, + \infty)} (x) + \frac 12. 
  \end{equation}
  Clearly, $$ G(D_j) - \frac 12 B_j = f(D_j). $$ Thus, the required
  assertion follows from the estimate $$ \left| f(x) \right| \leq e^{-
    \frac \epsilon 2 x^2} \, \sqrt{\frac \epsilon \pi}\,
  \int_{-\infty}^{- \left| x \right|} e^{- \frac \epsilon 2 t^2}\, dt $$
  and the fact that~$D_t$ is $\theta$-summable.
\end{proof}

\begin{theorem}
  \label{PSumCorl}
  If\/~$t \in [0, 1] \mapsto D_t$ is a piecewise $C^1_\Gamma$-path such
  that~$D_t$ is $p$-summable\footnote{A self adjoint operator~$D$ is
    called $p$-summable, for some~$1 \leq p < \infty$ if and only
    if~$(1+D^2)^{-\frac p2} \in L^1(\aM)$} for some~$1 \leq p < \infty$
  and $$ \int_0^1 \left\| (1 + D_t^2)^{- \frac p2} \right\|_1 \, dt < +
  \infty, $$ then
  \begin{multline*}
    sf(D_0, D_1) = \frac 1{c_p} \int_0^1 \tau \left( \dot D_t \,
      (1+D_t^2)^{-\frac p2 - \frac 12} \right)\, dt \\ + \tau \left(
      G(D_1) - \frac 12 B_1 - G(D_0) + \frac 12 B_0 \right),
  \end{multline*} 
  where $$ c_p = \int_{-\infty}^{+\infty} (1 +
  x^2)^{-\frac p2 - \frac 12}\, dx,\ \ G(x) = \frac {1}{c_p}
  \int_{-\infty}^x (1 + t^2)^{- \frac p2 - \frac 12}\, dt - \frac 12. $$
\end{theorem}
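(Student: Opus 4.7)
The proof will proceed as a direct specialization of Theorem~\ref{thm:SFunbdd} to the function
$$g(x) = \frac 1{c_p} (1 + x^2)^{-\frac p2 - \frac 12},$$
so that the antiderivative of $g$ normalized by $G(\pm\infty) = \pm\tfrac12$ agrees with the function $G$ appearing in the statement. What remains is to verify the three hypotheses of Theorem~\ref{thm:SFunbdd} and the requirement that $\vartheta(D_t) \in \Fred_*^{\pm 1}$ for every $t \in [0,1]$.

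First, the normalization $\int_{-\infty}^{+\infty} g(x)\, dx = 1$ is immediate from the definition of $c_p$. Next, to see that $\vartheta(D_t) \in \Fred_*^{\pm 1}$, I would argue as in the proof of Theorem~\ref{ThetaSumCorl}: from the pointwise estimate $(1+n^2)^{-p/2}\chi_{[-n,n]}(x) \leq (1+x^2)^{-p/2}$ and $p$-summability of $D_t$, every spectral projection $\chi_{[-n,n]}(D_t)$ is $\tau$-finite. Since $\vartheta$ sends compactly supported indicators on $\mathbb R$ to indicators of proper subintervals of $[-1,1]$, this forces the essential spectrum of $F_t=\vartheta(D_t)$ to be contained in $\{\pm 1\}$, hence $F_t \in \Fred_*^{\pm 1}$.

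For the integrability hypothesis, I would factor
$$\dot D_t \, g(D_t) = \tfrac 1{c_p}\, \bigl[\dot D_t\,(1+D_t^2)^{-\frac 12}\bigr]\, (1+D_t^2)^{-\frac p2},$$
and bound the trace norm by the product of the operator norm of the bracketed term (which is bounded on $[0,1]$ because the path is piecewise $C^1_\Gamma$) and $\|(1+D_t^2)^{-p/2}\|_1$, which is integrable in $t$ by assumption. The main technical point is the third hypothesis, $G(D_j) - \tfrac 12 B_j \in L^1(\aM)$. Following the idea behind~(\ref{ThetaSumTmpF}), I would set $f(x) = G(x) - \chi_{[0,+\infty)}(x) + \tfrac 12$, so that $f(D_j) = G(D_j) - \tfrac 12 B_j$. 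For $x>0$ one has $f(x) = -\tfrac 1{c_p}\int_x^{+\infty}(1+t^2)^{-(p+1)/2}\,dt$ (and symmetrically for $x<0$), and a direct estimate of the tail of this integral gives $|f(x)| \leq C_p\,(1+x^2)^{-p/2}$ for some constant $C_p>0$. Combined with $p$-summability of $D_j$, this yields $f(D_j) \in L^1(\aM)$.

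Once the hypotheses are verified, Theorem~\ref{thm:SFunbdd} gives precisely the claimed formula, after observing that the antiderivative $G$ defined in Theorem~\ref{thm:SFunbdd} coincides (up to the additive normalization $G(\pm\infty)=\pm\tfrac12$) with the function $G$ of the statement. I expect the tail-estimate step establishing $f(D_j) \in L^1(\aM)$ to be the only nontrivial obstacle; everything else is bookkeeping plus an appeal to the already-established Theorem~\ref{thm:SFunbdd}.
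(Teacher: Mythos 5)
Your proposal is correct and follows essentially the same route as the paper's proof: specialize Theorem~\ref{thm:SFunbdd} to $g(x) = \tfrac 1{c_p}(1+x^2)^{-p/2-1/2}$, verify $\vartheta(D_t)\in\Fred_*^{\pm 1}$ via $\tau$-finiteness of $\chi_{[-n,n]}(D_t)$, obtain integrability by factoring off $\dot D_t(1+D_t^2)^{-1/2}$ (operator-norm bounded by the $C^1_\Gamma$ hypothesis), and establish the endpoint condition by the tail estimate $|f(x)|\leq C_p(1+x^2)^{-p/2}$ for $|x|\geq 1$. The only cosmetic difference is that the paper derives $\tau$-finiteness of $\chi_{[-n,n]}(D_t)$ from domination by $(1+x^2)^{-p/2-1/2}$ while you use $(1+x^2)^{-p/2}$ directly from the $p$-summability definition; both work.
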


\begin{proof}
  The proof consists of specialization of
  Theorem~\ref{thm:SFunbdd} (and justifying~(\ref{ThetaSumTmp})) for the
  case~$g(x) = \frac 1{c_p} (1 + x^2)^{-\frac p2 - \frac 12}$.

  By the assumption, the mapping~$t \mapsto \dot D_t (1 + D_t^2)^{-\frac
    12}$ is operator norm continuous, hence $$ \int_0^1 \left\| \dot D_t
    \, (1 + D_t^2)^{- \frac p2 - \frac 12} \right\|_1 \, dt < + \infty.
  $$

  Now, for the first statement in~(\ref{ThetaSumTmp}) as in the proof of
  Theorem~\ref{ThetaSumCorl} it is sufficient to note the estimate $$ (1
  + n^2)^{-\frac p2 - \frac 12} \, \chi_{[-n, n]} (x) \leq (1 +
  x^2)^{-\frac p2 - \frac 12},\ \ x \in \Rl,\ n \geq 1.  $$
  Consequently, every projection~$\chi_{[-n, n]}(D_t)$ is $\tau$-finite
  and the argument is repeated verbatim.

  For the second statement in~(\ref{ThetaSumTmp}), we shall estimate the
  function~$f(x)$ given in~(\ref{ThetaSumTmpF}) as follows
  \begin{multline*}
    \left| f(x) \right| = \frac 1{c_p} \, \int_{-\infty}^{-\left| x
      \right|} (1 + t^2)^{-\frac p2 - \frac 12}\, dt \leq \frac 1
    {c_p}\, \int_{-\infty}^{- \left| x \right|} \left| t \right|^{-p -
      1}\, dt \\ = \frac 1{p \, c_p}\, \left| x \right|^{-p} \leq \frac
    {2^{\frac p2 }}{p \, c_p}\, (1 + x^2)^{-\frac p2},\ \ \left| x
    \right| \geq 1.
\end{multline*}
\end{proof}

A customary assumption in non-commutative geometry (see~\cite{ACS,
  CarPhi1998-MR1638603, CarPhi2004-MR2053481}) is that the path~$t \in
[0, 1] \mapsto D_t$ is $C^1$ with respect to the operator norm (which is
a stronger assumption than the $C^1_\Gamma$ assumption).  Under this
assumption, the statement of Theorem~\ref{ThetaSumCorl} remains exactly
the same (except the symbol~$\dot D_t$ now stands for the ordinary
G\^ateaux derivative).  On the other hand, when~$t \mapsto D_t$ is a
$C^1$-path in the operator norm, Theorem~\ref{PSumCorl} changes to
Theorem~\ref{PSumCorlClass} below.  In the latter theorem, we no longer
need the additional resolvent factor under the trace in the spectral flow
formula to guarantee summability.  The observations above, regarding
piecewise $C^1$-paths $t \mapsto D_t$, cover the spectral flow formulae
proved in~\cite{ACS, CarPhi1998-MR1638603}.  Observe also that, in the
latter case, the $p$-summability assumption is no longer sufficient to
guarantee that the end points satisfy the boundary assumptions of
Theorem~\ref{thm:SFunbdd} and therefore we have to require this
explicitly in Theorem~\ref{PSumCorlClass} below.

\begin{theorem}
  \label{PSumCorlClass}
  If\/~$t \in [0, 1] \mapsto D_t$ is a piecewise $C^1$-path (with
  respect to the operator norm) such that~$D_t$ is $p$-summable for
  some~$1 \leq p < \infty$, $$ \int_0^1 \left\| (1 + D_t^2)^{- \frac p2}
  \right\|_1 \, dt < + \infty
  $$ and $$ G(D_1) - \frac 12 B_1 - G(D_0) + \frac 12 B_0 \in L^1(\aM),
  $$ then
  \begin{multline*}
    sf(D_0, D_1) = \frac 1{c_p} \int_0^1 \tau \left( \dot D_t \,
      (1+D_t^2)^{-\frac p2} \right)\, dt \\ + \tau \left( G(D_1) - \frac
      12 B_1 - G(D_0) + \frac 12 B_0 \right),
  \end{multline*}
  where $$ c_p = \int_{-\infty}^{+\infty} (1 + x^2)^{-\frac p2}\, dx,\ \
  G(x) = \frac {1}{c_p} \int_{-\infty}^x (1 + t^2)^{- \frac p2}\, dt -
  \frac 12. $$
\end{theorem}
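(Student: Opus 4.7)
My plan is to deduce Theorem~\ref{PSumCorlClass} as a direct specialization of Theorem~\ref{thm:SFunbdd}, using the function
$$g(x) = \frac{1}{c_p}(1+x^2)^{-p/2},$$
which is positive, of class $C^2$, and of total integral $1$ (assuming $p>1$ so that $c_p$ is finite; the case $p=1$ being excluded by the divergence of $c_p$).

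First I would verify that $\vartheta(D_t) \in \Fred_*^{\pm 1}$ for every $t$. This repeats the argument in the proof of Theorem~\ref{PSumCorl} verbatim: the pointwise estimate
$$(1+n^2)^{-p/2}\,\chi_{[-n,n]}(x) \leq (1+x^2)^{-p/2},\qquad x\in\Rl,\ n\geq 1,$$
together with the $p$-summability of $D_t$, shows that every spectral projection $\chi_{[-n,n]}(D_t)$ is $\tau$-finite, hence $\chi(F_t)$ is $\tau$-finite for each indicator $\chi$ of a proper subinterval of $[-1,1]$. Next, since $C^1$ with respect to the operator norm is strictly stronger than $C^1_\Gamma$ (as remarked in the paper), the path is in particular piecewise $C^1_\Gamma$, and $\dot D_t$ coincides with the ordinary G\^ateaux derivative, which is operator norm continuous on each smooth piece.

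The step where the stronger smoothness hypothesis really pays off is the integrability condition of Theorem~\ref{thm:SFunbdd}. Because $t \mapsto \dot D_t$ is operator norm continuous on each $C^1$ piece, $\sup_t \|\dot D_t\| < \infty$, so
$$\int_0^1 \|\dot D_t\, g(D_t)\|_1\, dt \;\leq\; \frac{\sup_t \|\dot D_t\|}{c_p}\,\int_0^1 \|(1+D_t^2)^{-p/2}\|_1\, dt \;<\; +\infty$$
by the hypothesis. This is precisely what permits us to drop the auxiliary $(1+D_t^2)^{-1/2}$ factor present in the integrand of Theorem~\ref{PSumCorl}, where the weaker $C^1_\Gamma$ assumption only ensured boundedness of $\dot D_t(1+D_t^2)^{-1/2}$ rather than of $\dot D_t$ itself. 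The boundary condition $G(D_1)-\tfrac12 B_1-G(D_0)+\tfrac12 B_0 \in L^1(\aM)$ is stipulated directly as a hypothesis of the theorem (since, unlike in the $C^1_\Gamma$ setting, $p$-summability alone no longer forces this, as already emphasized in the paragraph preceding the statement).

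With all hypotheses of Theorem~\ref{thm:SFunbdd} verified, applying that theorem produces the formula claimed in Theorem~\ref{PSumCorlClass}, with constant $c_p = \int_\Rl (1+x^2)^{-p/2}\,dx$ and antiderivative $G$ as stated. The only genuinely substantive point is the integrability verification; everything else is a direct transcription of the $p$-summable reduction already carried out in the proof of Theorem~\ref{PSumCorl}. Since no new analytic machinery is required, I do not expect a serious obstacle; the main conceptual content is the observation that operator norm $C^1$-smoothness of the path is exactly the hypothesis that absorbs the missing resolvent power into $\|\dot D_t\|$.
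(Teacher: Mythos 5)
Your proposal is correct and takes essentially the same route as the paper intends: the paper does not write out a separate proof of Theorem~\ref{PSumCorlClass}, but the paragraph preceding it makes clear that the theorem is a direct specialization of Theorem~\ref{thm:SFunbdd} with $g(x)=\frac{1}{c_p}(1+x^2)^{-p/2}$, using the operator-norm $C^1$ hypothesis to bound $\|\dot D_t\,g(D_t)\|_1$ by $\sup_t\|\dot D_t\|\cdot\|g(D_t)\|_1$ (thus dispensing with the extra resolvent factor of Theorem~\ref{PSumCorl}), and requiring the boundary condition as an explicit hypothesis since $p$-summability alone no longer yields it. Your remark about the divergence of $c_p=\int_{\Rl}(1+x^2)^{-p/2}\,dx$ at $p=1$ is also accurate and identifies a genuine imprecision in the paper's stated range $1\leq p<\infty$, which implicitly needs $p>1$ for the formula to be meaningful.
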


\section{Double operator integrals.}
\label{sec:double-oper-integr}

In this section, we shall briefly outline the theory of double operator
integrals (DOI) , developed recently in~\cite{PS-RFlow, PSW-DOI, PS-DiffP,
  PoSu, PoSuNGapps}.  This theory unifies several different approaches
of harmonic analysis to smoothness properties of operator functions.  In
the present section, we shall mostly present the results (proved
somewhere else) needed to complete the proof of
Theorem~\ref{SmallLoopsSF} (see
properties~(\ref{DOIresolution})--(\ref{DOIlimit})) and those needed in
Section~\ref{sec:paths-self-adjoint}.

The theory of double operator integrals is a method of  giving an
integral representation of the difference $f(A)-f(B)$ where $f$ is a bounded
Borel function and $A$ and $B$ are self adjoint.
 In the case when $A,B$ are $n\times n$
matrices with spectral representation
 $A=\sum_{j=1}^n \lambda_jE_j$, $B=\sum _{j=1}^n\mu_kF_k$
 (here $E_j$ and $F_k$ denote spectral projections) 
this integral
representation is obtained from the following elementary computation
$$
f(A)-f(B)= \sum _{j,k=1}^n (f(\lambda_j)-f(\mu_k))E_jF_k=\sum _{j,k=1}^n
\frac{f(\lambda_j)-f(\mu_k))}{\lambda_j-\mu_k}E_j(A-B)F_k.
$$
In other words, we have just represented the difference $f(A)-f(B)$ as the
Stieltjes double operator integral $\int\int
\frac{f(\lambda)-f(\mu))}{\lambda-\mu} E_j(A-B)F_k.$ 
Notice that we are making use of the bimodule property of the $n \times n$ matrices.
An exposition of an early version of DOI which may assist the reader may be found in \cite{R}.

It is precisely the generalisation of this 
perturbation formula to infinite dimensional analogues that constitutes
the essence of the double operator integration theory initiated by Daletskii
and Krein and developed by Birman and Solomyak for type $I$ factors, and
further extended to semifinite  von Neumann algebras in
[17,18,16] and [31,32,34] to which we refer for additional historical information
and references.

Let~$\aM$ be a semi-finite von Neumann algebra and let~$\tau$ be a
n.s.f.~trace.  The symbol~$\sE$ stands for a non-commutative
fully\footnote{We shall omit the word ``fully'' in the sequel.}
symmetric ideal associated with the couple~$\left( \aM, \tau \right)$
(see~\cite{DDP1992, SukChi1990-MR1080637}).  In particular, $\sL^p$, $1
\leq p \leq \infty$ stands for the non-commutative $L^p$-Schatten ideal.
Furthermore, the symbol~$\sEcross$ stands for the K\"othe
dual~$\sEcross$ of a symmetric ideal~$\sE$ (see,~\cite{DDP1993}).  In
particular, if~$\sE = \sL^p$, $1 \leq p \leq \infty$, then~$\sEcross =
\sL^{p'}$, where~$\frac 1p + \frac 1{p'} = 1$.

We shall let $D_0, D_1$ denote
self adjoint unbounded operators
affiliated with $\aM$.  Let~$dE_\lambda^0$, $dE_\mu^1$ be the
corresponding spectral measures.  Recall that for every $K_1,K_2\in \sL^2$
$$ \tau(K_1\, dE_\lambda^0\,
K_2\, dE^1_\mu),\ \ \lambda, \mu \in \Rl $$ is a $\sigma$-additive
complex-valued measure on the plane~$\Rl^2$ with the total variation
bounded by~$\|K_1\|_{2} \|K_2\|_{2}$, see~\cite[Remark~3.1]{PSW-DOI}.

Let~$\phi = \phi(\lambda, \mu)$ be a bounded Borel function on~$\Rl^2$.
We call the function~$\phi$ {\it $dE^0 \otimes dE^1$-integrable\/} in
the symmetric ideal~$\sE$, if and only if there is
a linear operator~$T_\phi = T_\phi(D_0, D_1) \in B(\sE)$ such that
\begin{equation}
  \label{DOIdefFmla}
  \tau (K_1\, T_\phi(K_2)) = \int_{\Rl^2}
  \phi(\lambda, \mu)\, \tau(K_1\, dE_\lambda^0 \, K_2 \, dE_\mu^1),
\end{equation}
for every $$
K_1 \in \sL^2 \cap \sEcross \ \ \text{and}\ \ K_2 \in \sL^2
\cap \sE. $$
If the operator~$T_\phi(D_0, D_1)$ exists, then it is
unique,~\cite[Definition~2.9]{PSW-DOI}.  The latter definition is in
fact a special case of~\cite[Definition~2.9]{PSW-DOI}.  See
also~\cite[Proposition~2.12]{PSW-DOI} and the discussion there on
pages~81--82.  The operator~$T_\phi$ is called {\it the Double Operator
  Integral.}

We shall write~$\phi \in \Phi(\sE)$ if and only if the function~$\phi$
is $dE^0 \otimes dE^1$-integrable in the symmetric ideal~$\sE$ for any
measures~$dE^0$ and~$dE^1$.  

\begin{theorem}[{\cite{PSW-DOI, PS-DiffP}}]
  \label{HomomorphismResult}
  Let~$D_0, D_1$ be unbounded self adjoint operators affiliated to  $\aM$.  The mapping
  $$ \phi \mapsto T_\phi = T_\phi(D_0, D_1) \in \Bd(\sE),\ \ \phi
  \in \Phi(\sE) $$ satisfies $T_{\phi^\ast}=T_\phi^\ast$ and $T_{\phi\psi}= T_\phi T_\psi$.  Moreover,
  if~$\alpha, \beta: \Rl \mapsto \Cx$ are bounded Borel
  functions and 
  if~$\phi(\lambda, \mu) = \alpha(\lambda)$ (resp.\ $\phi(\lambda,
  \mu) = \beta(\mu)$), $\lambda, \mu \in \Rl$, then $$ T_\phi(K) =
  \alpha(D_0)\, K\ \ (resp.\ T_\phi(K) = K\, \beta(D_1)),\ \ K\in
  \sE.$$ 
\end{theorem}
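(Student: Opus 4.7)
The plan is to verify each assertion by returning to the defining relation (\ref{DOIdefFmla}) and exploiting the $\sigma$-additivity together with the total-variation bound $\|K_1\|_{2}\,\|K_2\|_{2}$ on the scalar measure $\Delta\mapsto\tau(K_1\,dE^0_\lambda\,K_2\,dE^1_\mu)$ noted immediately before the theorem statement.

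First I would handle the two ``multiplier'' formulae, as they are the easiest and also serve as building blocks for the rest. If $\phi(\lambda,\mu)=\alpha(\lambda)$, a Fubini argument on the scalar measure, together with $\int dE^1_\mu=\id$ and the spectral theorem for $D_0$, gives
\begin{equation*}
  \int\alpha(\lambda)\,\tau(K_1\,dE^0_\lambda\,K_2\,dE^1_\mu)=\tau(K_1\,\alpha(D_0)\,K_2)
\end{equation*}
for all $K_1\in\sL^2\cap\sEcross$ and $K_2\in\sL^2\cap\sE$. Since $\sL^2\cap\sEcross$ is norm-dense in $\sEcross$ and $T_\phi(K_2)$ is uniquely determined by (\ref{DOIdefFmla}), this forces $T_\phi(K_2)=\alpha(D_0)\,K_2$; the case $\phi(\lambda,\mu)=\beta(\mu)$ is symmetric. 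The involution $T_{\phi^*}=T_\phi^*$ then follows by conjugating (\ref{DOIdefFmla}) and noting that cyclicity of $\tau$ together with self-adjointness of the spectral projections yields
\begin{equation*}
  \overline{\tau(K_1\,dE^0_\lambda\,K_2\,dE^1_\mu)}=\tau(K_2^*\,dE^0_\lambda\,K_1^*\,dE^1_\mu).
\end{equation*}

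The heart of the argument is the homomorphism property $T_{\phi\psi}=T_\phi T_\psi$. I would first establish it on elementary product symbols $\phi_i(\lambda,\mu)=\alpha_i(\lambda)\beta_i(\mu)$: combining the two multiplier formulae with a Fubini computation gives $T_{\phi_i}(K)=\alpha_i(D_0)\,K\,\beta_i(D_1)$, hence
\begin{equation*}
  T_{\phi_1}T_{\phi_2}(K)=\alpha_1(D_0)\alpha_2(D_0)\,K\,\beta_2(D_1)\beta_1(D_1)=T_{\phi_1\phi_2}(K),
\end{equation*}
using commutativity of the functional calculus on each factor separately. The general case is then reached by approximating arbitrary $\phi,\psi\in\Phi(\sE)$ by finite sums of such products in a topology on $\Phi(\sE)$ in which the assignment $\phi\mapsto T_\phi\in\Bd(\sE)$ is continuous. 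The natural choice is a projective/Haagerup-type tensor norm built from $L^\infty(dE^0)\otimes L^\infty(dE^1)$, in which product functions are dense and the $dE^0\otimes dE^1$-integrability in $\sE$ is stable under sums and limits.

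I expect this last density-and-continuity step to be the main obstacle. Two subtleties must be addressed: one must give an intrinsic description of the symbol class $\Phi(\sE)$ that makes the approximating topology explicit (the content of the tensor-product framework of \cite{PSW-DOI, PS-DiffP}), and one must verify that both $\phi\mapsto\phi^*$ and the pointwise product extend by continuity from elementary symbols to this completion. Once the framework is in place, the identities $T_{\phi^*}=T_\phi^*$, $T_{\phi\psi}=T_\phi T_\psi$, and the multiplier formulae all propagate from product symbols to the full class $\Phi(\sE)$.
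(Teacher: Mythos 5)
This theorem is stated in the paper with the citation \cite{PSW-DOI, PS-DiffP} and is not proved in the paper at all; there is no in-text argument to compare against. So the only meaningful comparison is to the construction in those references.

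Your treatment of the two multiplier formulae and of $T_{\phi^*}=T_\phi^*$ via the defining weak identity~(\ref{DOIdefFmla}) is fine. The gap is exactly where you flag it, but it is worse than an ``obstacle to be worked out'': the class~$\Phi(\sE)$ is defined by an \emph{existence} condition (there exists a bounded $T_\phi$ on~$\sE$ satisfying~(\ref{DOIdefFmla})), not as a completion of product symbols, and it carries no ambient norm in which product symbols are dense. In general~$\Phi(\sE)\supsetneq\mathfrak{A}_0$, so the projective-tensor approximation you propose does not reach all of~$\Phi(\sE)$, and even on~$\mathfrak{A}_0$ you would have to prove that $\phi\mapsto T_\phi$ is continuous in the proposed topology before passing to limits; that is the content you are trying to establish, not a given. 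The route actually used in~\cite{PSW-DOI} avoids this entirely: on the Hilbert space~$L^2(\aM)$ one observes that left multiplication by~$E^0(\cdot)$ and right multiplication by~$E^1(\cdot)$ are commuting projection-valued measures, so they generate a joint spectral measure~$G=E^0\otimes E^1$ on~$\Rl^2$ acting on~$L^2(\aM)$, and one sets $T_\phi=\int_{\Rl^2}\phi\,dG$. Then $T_{\phi^*}=T_\phi^*$ and $T_{\phi\psi}=T_\phi T_\psi$ are immediate from the Borel functional calculus of a single spectral measure, with no approximation by products needed. The condition $\phi\in\Phi(\sE)$ is precisely that this $L^2$-bounded operator restricts to a bounded operator on~$\sL^2\cap\sE$ and hence (by density of~$\sL^2\cap\sE$ in~$\sE$) extends uniquely to~$\sE$; the algebraic identities then transfer from~$\sL^2$ to~$\sE$ by that density and uniqueness, not by density of elementary symbols. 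You should replace the ``approximate~$\phi$ by sums of products'' step by this spectral-measure argument; the multiplier formulae you already derived then just identify the marginals of~$G$.
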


The latter result allows the construction of  a sufficiently large class of
functions in~$\Phi(\sE)$.  Indeed, let us
consider the class~${\mathfrak{A}_0}$ which consists of all bounded
Borel functions~$\phi(\lambda, \mu)$, $\lambda, \mu \in \Rl$ admitting
the representation
\begin{equation}
  \label{AnotReps}
  \phi(\lambda, \mu) = \int_S \alpha_s(\lambda)\,
  \beta_s(\mu)\, d\nu(s)
\end{equation}
such that $$ \int_S \|\alpha_s\|_\infty \, \|\beta_s\|_\infty \, d\nu(s)
< \infty, $$ where~$(S, d\nu)$ is a measure space, $\alpha_s,
\beta_s:\Rl \mapsto \Cx$ are bounded Borel functions, for every~$s \in
S$ and~$\|\cdot\|_\infty$ is the operator norm.  The
space~$\mathfrak{A}_0$ is endowed with the norm $$
\|\phi\|_{\mathfrak{A}_0} := \inf \int_S \|\alpha_s\|_\infty \,
\|\beta_s\|_\infty \, d\nu(s), $$ where the minimum runs over all
possible representations~(\ref{AnotReps}).  The space~$\mathfrak{A}_0$
together with the norm~$\|\cdot\|_{\mathfrak{A}_0}$ is a Banach algebra,
see~\cite{PS-DiffP} for details.  The subspace of~$\mathfrak{A}_0$ of
all functions~$\phi$ admitting representation~(\ref{AnotReps}) with
continuous functions~$\alpha_s$ and~$\beta_s$ is denoted
by~$\mathfrak{C}_0$.  The following result is a straightforward
corollary of Theorem~\ref{HomomorphismResult}.

\begin{corollary}[{\cite[Proposition 4.7]{PS-DiffP}}]
  \label{AzeroClass}
  Every~$\phi \in {\mathfrak{A}_0}$ is $dE^0 \otimes dE^1$-integrable in
  the symmetric ideal~$\sE$ for any measures~$dE^0$, $dE^1$, i.e.\
  ${\mathfrak{A}_0} \subseteq \Phi(\sE)$.  Moreover, if\/~$T_\phi =
  T_\phi(D_0, D_1)$, for some self adjoint operators~$D_0, D_1,$ affiliated with
$  \aM$, then $$ \|T_\phi\|_{\Bd(\sE)} \leq \|\phi\|_{\mathfrak{A}_0}, $$
  for every~$\phi \in \mathfrak{A}_0$.
\end{corollary}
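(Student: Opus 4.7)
The plan is to construct the operator $T_\phi=T_\phi(D_0,D_1)$ explicitly from the representation~(\ref{AnotReps}) and then verify the defining identity~(\ref{DOIdefFmla}) by Fubini. Fix $\phi \in \mathfrak{A}_0$ with representation
\begin{equation*}
  \phi(\lambda,\mu) = \int_S \alpha_s(\lambda)\,\beta_s(\mu)\,d\nu(s),\qquad
  \int_S \|\alpha_s\|_\infty\|\beta_s\|_\infty\,d\nu(s)<\infty,
\end{equation*}
and, for $K \in \sE$, define
\begin{equation*}
  T_\phi(K):=\int_S \alpha_s(D_0)\,K\,\beta_s(D_1)\,d\nu(s),
\end{equation*}
the integral being understood as a Bochner integral in~$\sE$. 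Since $\sE$ is a symmetric (in particular, $\aM$-bimodule) ideal, one has $\|\alpha_s(D_0)\,K\,\beta_s(D_1)\|_\sE\leq \|\alpha_s\|_\infty\|\beta_s\|_\infty\|K\|_\sE$; together with the measurability of $s\mapsto \alpha_s(D_0)K\beta_s(D_1)$ (which follows from the joint measurability assumed in the definition of~$\mathfrak{A}_0$), this shows the integrand is Bochner integrable and gives the norm estimate
\begin{equation*}
  \|T_\phi(K)\|_\sE \leq \|K\|_\sE\int_S \|\alpha_s\|_\infty\|\beta_s\|_\infty\,d\nu(s).
\end{equation*}
Taking the infimum over all representations~(\ref{AnotReps}) yields $\|T_\phi\|_{\Bd(\sE)}\leq \|\phi\|_{\mathfrak{A}_0}$.

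Next I would verify the defining identity~(\ref{DOIdefFmla}). Pick $K_1\in \sL^2\cap \sEcross$ and $K_2\in\sL^2\cap\sE$. Since $\tau$ is normal and the Bochner integral commutes with the bounded functional $X\mapsto \tau(K_1 X)$,
\begin{equation*}
  \tau(K_1\,T_\phi(K_2)) = \int_S \tau\bigl(K_1\,\alpha_s(D_0)\,K_2\,\beta_s(D_1)\bigr)\,d\nu(s).
\end{equation*}
By the product case of Theorem~\ref{HomomorphismResult} (applied to the function $(\lambda,\mu)\mapsto \alpha_s(\lambda)\beta_s(\mu)$, or equivalently by the elementary spectral-theoretic identity valid for Hilbert--Schmidt $K_1,K_2$),
\begin{equation*}
  \tau\bigl(K_1\,\alpha_s(D_0)\,K_2\,\beta_s(D_1)\bigr)
  = \int_{\Rl^2}\alpha_s(\lambda)\,\beta_s(\mu)\,\tau\bigl(K_1\,dE^0_\lambda\,K_2\,dE^1_\mu\bigr).
\end{equation*}
The finite scalar measure $\tau(K_1\,dE^0_\lambda\,K_2\,dE^1_\mu)$ on~$\Rl^2$ has total variation at most $\|K_1\|_2\|K_2\|_2$, and by assumption $\int_S \|\alpha_s\|_\infty\|\beta_s\|_\infty\,d\nu(s)<\infty$, so Fubini's theorem applies and yields
\begin{equation*}
  \tau(K_1\,T_\phi(K_2)) = \int_{\Rl^2}\phi(\lambda,\mu)\,\tau\bigl(K_1\,dE^0_\lambda\,K_2\,dE^1_\mu\bigr),
\end{equation*}
which is exactly~(\ref{DOIdefFmla}). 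Uniqueness of the operator satisfying~(\ref{DOIdefFmla}) (already recorded in the definition of~$T_\phi$) shows in particular that $T_\phi$ does not depend on the choice of representation~(\ref{AnotReps}), so taking the infimum over representations is legitimate and completes the proof.

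The main obstacle is the Fubini step: one must know that the vector-valued integrand $s\mapsto \alpha_s(D_0)K\beta_s(D_1)$ is Bochner measurable in~$\sE$ and that the scalar measures $\tau(K_1\,dE^0_\lambda K_2\,dE^1_\mu)$ really do have finite total variation on~$\Rl^2$ bounded by $\|K_1\|_2\|K_2\|_2$. The first is built into the definition of $\mathfrak{A}_0$; the second is recorded in~\cite[Remark~3.1]{PSW-DOI}. Once these are in hand, the proof reduces to Fubini applied to a jointly integrable kernel.
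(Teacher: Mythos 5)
Your proof is correct and takes essentially the same approach as the reference the paper points to: construct $T_\phi$ by integrating the elementary double operator integrals $K\mapsto \alpha_s(D_0)K\beta_s(D_1)$ over $(S,\nu)$, bound its $\Bd(\sE)$-norm by $\int_S\|\alpha_s\|_\infty\|\beta_s\|_\infty\,d\nu(s)$, and verify~(\ref{DOIdefFmla}) via Fubini using the finite total-variation bound from~\cite[Remark~3.1]{PSW-DOI}. The paper itself gives no argument beyond calling the result a straightforward corollary of Theorem~\ref{HomomorphismResult} and citing~\cite[Proposition~4.7]{PS-DiffP}; your write-up just supplies those deferred details, including the correct observation that the uniqueness built into the definition of $T_\phi$ makes the construction representation-independent.
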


The major benefit delivered by the double operator integral theory is the
observation that, if~$D$ is a self adjoint linear operator 
affiliated with $\aM$ and~$A$ is a
self adjoint perturbation from~$\sE$, then the perturbation of the
operator function~$f(D)$ (where~$f: \Rl \mapsto \Cx$) is given by a
double operator integral.  Namely, $$ f(D + A) - f(D) = T_{\psi_f} (A),
$$ where~$T_{\psi_f}:= T_{\psi_f} (D+ A, D)\in B(\sE)$  and
\begin{equation}
  \label{PsiFfunction}
  \psi_f(\lambda, \mu) =
  \frac {f(\lambda) - f(\mu)}{\lambda - \mu},\ \lambda \neq \mu,\ \
  \psi(\lambda, \lambda) = f'(\lambda). 
\end{equation}
The identity above is proved in Theorem~\ref{PerturbationInM} below.
The proof is based on the following lemma which is a slight
generalization of~\cite[Lemma~7.1]{PSW-DOI}.  The proof of the lemma is
a repetition of that of~\cite[Lemma~7.1]{PSW-DOI} and therefore is
omitted.

\begin{lemma}
  \label{PerturbationLemma}
  Let~$D_j$ be self adjoint linear operators affiliated with
  $\aM$ with corresponding spectral measures~$E^j_n = E^{D_j}
  [-n, n]$, $j = 0,1$, $n = 1, 2,
  \ldots\,$.  If
  \begin{equation}
    \label{GeneralPerturbationPhi}
    \phi(\lambda_1, \lambda_0) = \frac
    {\beta_1(\lambda_1) \, \beta_0 (\lambda_0)} {\alpha_1 (\lambda_1)\,
      \alpha_0 (\lambda_0)} \, \psi_f (\lambda_1, \lambda_0)  \in
    \Phi(\sE), 
  \end{equation}
  where~$f$ is a Borel function and~$\alpha_j$, $\beta_j$ are bounded
  Borel functions, then, for every~$K \in \sE$,
  \begin{multline*}
    E^1_n \beta_1(D_1) \left[ f(D_1)\, K - K\, f(D_0) \right]\,
    \beta_0(D_0)\, E^0_n \\ = T_\phi \left( E_n^1 \alpha_1(D_1) \,
      \left[ D_1\, K - K\, D_0 \right] \, \alpha_0(D_0) \, E^0_n
    \right).
  \end{multline*}
\end{lemma}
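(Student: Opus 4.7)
The plan is to realize both sides of the claimed identity as Double Operator Integrals of explicit bounded Borel symbols and then to match them by invoking the multiplicativity $T_{\phi\psi} = T_\phi T_\psi$ from Theorem~\ref{HomomorphismResult}. The role of the spectral cutoffs $E^j_n = \chi_{[-n,n]}(D_j)$ is precisely to tame the otherwise unbounded factors $\lambda_1 - \lambda_0$ and $f(\lambda_j)$ so that every symbol which appears stays within the class $\Phi(\sE)$.

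First I would recognise the argument of $T_\phi$ on the right-hand side as itself a DOI applied to $K$. Define
$$\nu(\lambda_1,\lambda_0) := \chi_{[-n,n]}(\lambda_1)\,\alpha_1(\lambda_1)\,(\lambda_1-\lambda_0)\,\chi_{[-n,n]}(\lambda_0)\,\alpha_0(\lambda_0).$$
The cutoffs force $|\lambda_1-\lambda_0|\leq 2n$ on the support, so $\nu$ is a bounded Borel function; writing $\lambda_1 - \lambda_0 = \lambda_1 - \lambda_0$ expresses $\nu$ as a difference of two tensor-product symbols, each a product of bounded Borel functions of a single variable. By the final assertion of Theorem~\ref{HomomorphismResult} it follows that $\nu \in \Phi(\sE)$ and
$$T_\nu(K) = E^1_n\,\alpha_1(D_1)\,[D_1 K - K D_0]\,\alpha_0(D_0)\,E^0_n,$$
so the right-hand side of the lemma equals $T_\phi\bigl(T_\nu(K)\bigr)$.

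Next I would multiply and telescope. Since $\phi \in \Phi(\sE)$ by hypothesis and $\nu \in \Phi(\sE)$ as just shown, the multiplicativity of the symbol calculus gives $T_\phi T_\nu = T_{\phi\nu}$. In the product $\phi\nu$ the factors $\alpha_1(\lambda_1)$ and $\alpha_0(\lambda_0)$ in the denominator of $\phi$ cancel those supplied by $\nu$, and $(\lambda_1-\lambda_0)\psi_f(\lambda_1,\lambda_0) = f(\lambda_1) - f(\lambda_0)$ by the definition~(\ref{PsiFfunction}), leaving
$$(\phi\nu)(\lambda_1,\lambda_0) = \chi_{[-n,n]}(\lambda_1)\,\beta_1(\lambda_1)\,\chi_{[-n,n]}(\lambda_0)\,\beta_0(\lambda_0)\,\bigl(f(\lambda_1) - f(\lambda_0)\bigr).$$
This is again bounded Borel, since the cutoffs kill the possibly unbounded parts of $f$, and it again splits as a difference of tensor products. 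A final application of Theorem~\ref{HomomorphismResult} identifies
$$T_{\phi\nu}(K) = E^1_n\,\beta_1(D_1)\,[f(D_1)K - K f(D_0)]\,\beta_0(D_0)\,E^0_n,$$
which is precisely the left-hand side, completing the proof.

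The main delicate point is one of bookkeeping rather than depth: at each step one must verify that the symbol under consideration actually belongs to $\Phi(\sE)$ so that multiplicativity may be legitimately applied. The hypothesis $\phi \in \Phi(\sE)$ is the crucial input; the auxiliary symbols $\nu$ and $\phi\nu$ are reduced, via the cutoffs $\chi_{[-n,n]}$, to tensor products of bounded Borel functions of a single variable, which is exactly where Theorem~\ref{HomomorphismResult} applies directly. One should also note that the formal expressions $D_1 K - K D_0$ and $f(D_1)K - K f(D_0)$ become genuine elements of $\aM$ only after being sandwiched between $E^1_n$ and $E^0_n$; the lemma is stated precisely so that no domain questions ever arise.
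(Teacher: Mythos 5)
The paper omits the proof entirely, referring the reader to the proof of Lemma~7.1 in \cite{PSW-DOI}, of which this is stated to be a slight generalization. Your argument reconstructs exactly that argument: realise the operator inside $T_\phi$ on the right-hand side as $T_\nu(K)$ for the cut-off symbol $\nu(\lambda_1,\lambda_0)=\chi_{[-n,n]}(\lambda_1)\alpha_1(\lambda_1)(\lambda_1-\lambda_0)\chi_{[-n,n]}(\lambda_0)\alpha_0(\lambda_0)$, invoke the multiplicativity $T_{\phi\nu}=T_\phi T_\nu$ of the symbol calculus from Theorem~\ref{HomomorphismResult}, observe the cancellation $(\lambda_1-\lambda_0)\psi_f=f(\lambda_1)-f(\lambda_0)$, and then identify $T_{\phi\nu}(K)$ via the tensor-product part of Theorem~\ref{HomomorphismResult}. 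This is the natural proof, and it is the same route as the cited source; the only point to be explicit about, which you implicitly use, is that $\phi\in\Phi(\sE)$ forces $\phi$ to be a bounded Borel function, the cut-offs force $\nu$ and $\phi\nu$ to be bounded Borel functions of tensor-product form, and the algebraic cancellation of $\alpha_1(\lambda_1)\alpha_0(\lambda_0)$ in passing from $\phi\nu$ to $\chi_{[-n,n]}(\lambda_1)\beta_1(\lambda_1)\,\chi_{[-n,n]}(\lambda_0)\beta_0(\lambda_0)\bigl(f(\lambda_1)-f(\lambda_0)\bigr)$ uses the implicit non-vanishing of the $\alpha_j$ built into the hypothesis \eqref{GeneralPerturbationPhi}. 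Your proposal is correct and fills the gap the paper leaves to the reader in exactly the intended way.
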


\begin{theorem}
  \label{PerturbationInM}
  Let~$f$ be a Borel function and let~$\alpha_j$ and~$\beta_j$, $j=0,1$
  be bounded Borel functions.  Let~$D_j$, $j = 0, 1$ be self adjoint
  linear operators affiliated with $\aM$ and let~$A \in \aM$ be such that $$ B = \alpha_1 (D_1)
  \, \left[ D_1 \, A - A \, D_0 \right] \, \alpha_0(D_0) \in \aM. $$
  If\/~$\phi \in \Phi(\aM)$, where the function~$\phi$ is given
  in~(\ref{GeneralPerturbationPhi}), then
  $$ C = \beta_1(D_1)\, \left[ f(D_1)\, A - A\, f(D_0) \right]\,
  \beta_0(D_0) \in \aM $$ and $$ C = T_\phi(B). $$
\end{theorem}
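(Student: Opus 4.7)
The plan is to extract the identity $C = T_\phi(B)$ together with the boundedness of $C$ by applying Lemma~\ref{PerturbationLemma} in a localized form and then passing to the limit as the cutoffs are removed.

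First, I would apply Lemma~\ref{PerturbationLemma} with $K$ replaced by $A \in \aM$. Although the lemma is phrased for $K \in \sE$, its proof reduces (after left/right multiplication by the spectral projections $E^1_n = E^{D_1}[-n,n]$ and $E^0_n = E^{D_0}[-n,n]$) to an algebraic identity among bounded operators, and this identity is equally valid for $K = A \in \aM$. This yields, for every $n \geq 1$,
\begin{equation*}
  E^1_n\, C\, E^0_n \;=\; T_\phi\bigl(E^1_n\, B\, E^0_n\bigr),
\end{equation*}
where $C := \beta_1(D_1)\bigl[f(D_1)\, A - A\, f(D_0)\bigr]\, \beta_0(D_0)$ is at this stage defined only as a linear map on the dense domain $\bigcup_m \Ran E^0_m$, while $E^1_n\, C\, E^0_n$ lies in $\aM$ because the cutoffs render $E^1_n f(D_1)$ and $f(D_0)\, E^0_n$ bounded.

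Second, since $\phi \in \Phi(\aM)$ the operator $T_\phi$ belongs to $\Bd(\aM)$, and so the net $\{E^1_n\, C\, E^0_n\}$ is norm bounded by $\|T_\phi\|_{\Bd(\aM)}\, \|B\|$. Because $E^j_n \to \id$ strongly and the operators $B_n := E^1_n\, B\, E^0_n$ are uniformly bounded, one has $B_n \to B$ $\sigma$-strongly and hence $\sigma$-weakly. To commute this limit with $T_\phi$, I would invoke the $\sigma$-weak continuity of $T_\phi$ on bounded subsets of $\aM$: this should follow from the defining duality~(\ref{DOIdefFmla}), which exhibits $T_\phi$ on $\aM$ as the Banach-space adjoint of a bounded operator on the predual $L^1(\aM)$ (essentially $T_{\phi^*}$ acting in the opposite direction, cf.\ Theorem~\ref{HomomorphismResult}). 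Hence $T_\phi(B_n) \to T_\phi(B)$ $\sigma$-weakly.

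To conclude, I would identify the $\sigma$-weak limit of the left-hand side with $C$. For any $m$ and any vectors $\xi \in \Ran E^1_m$, $\eta \in \Ran E^0_m$, the matrix coefficient $\langle E^1_n\, C\, E^0_n\, \eta,\, \xi\rangle$ equals $\langle C\eta,\, \xi \rangle$ for all $n \geq m$; together with the uniform norm bound this forces the $\sigma$-weak limit to be a bounded operator agreeing with $C$ on a dense subspace, so $C$ extends uniquely to an element of $\aM$ and coincides with $T_\phi(B)$. The principal obstacle will be in the second step: even though $T_\phi \in \Bd(\aM)$ is postulated, what is actually required is its $\sigma$-weak continuity, and verifying this amounts to unwinding the DOI construction to produce a genuine preadjoint on $L^1(\aM)$; once that is in hand the remaining steps are routine approximation.
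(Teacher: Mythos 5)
Your overall route matches the paper's: localize with $E^j_n=E^{D_j}[-n,n]$, apply Lemma~\ref{PerturbationLemma} to get the identity $C_n=T_\phi(B_n)$, exploit the uniform bound $\|C_n\|\le\|T_\phi\|_{\Bd(\aM)}\|B\|$ coming from $\phi\in\Phi(\aM)$, and pass to the weak limit using that $T_\phi$ has a preadjoint on $L^1(\aM)$ (the paper cites~\cite[Lemma~2.4 and the proof of Proposition~2.6]{PoSu} for weak-operator continuity of $T_\phi$; since all nets involved are norm bounded, your $\sigma$-weak formulation and the paper's WO formulation are interchangeable, so there is no real obstacle here).

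The genuine gap is in your treatment of $C$ itself. You write that $C$ ``is at this stage defined only as a linear map on the dense domain $\bigcup_m\Ran E^0_m$'' and conclude that the limit ``agrees with $C$ on a dense subspace, so $C$ extends uniquely to an element of $\aM$.'' But for $\eta\in\Ran E^0_m$, the vector $C\eta$ requires $A\,\beta_0(D_0)\eta\in\dom\bigl(f(D_1)\beta_1(D_1)\bigr)$, and that is precisely the inclusion~(\ref{DomainProp}) the paper must \emph{prove}, not a given. So $C$ is not a priori defined anywhere, and the phrase ``$\langle C\eta,\xi\rangle$'' is not yet meaningful. The paper avoids this by introducing the sesquilinear form
$q(\xi,\eta)=\langle A\beta_0(D_0)\xi,\ \bar\beta_1(D_1)\bar f(D_1)\eta\rangle-\langle\beta_1(D_1)A\beta_0(D_0)f(D_0)\xi,\ \eta\rangle$
for $\xi\in\dom(D_0)$, $\eta\in\dom(D_1)$, which is well defined because $f(D_1)$ is moved to the other side of the inner product and acts only on $\eta\in\dom(D_1)$. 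One then shows $\langle C_n\xi,\eta\rangle\to q(\xi,\eta)$, uses the uniform bound on $C_n$ to conclude $q$ is bounded, and from this deduces~(\ref{DomainProp}), at which point $C\in\aM$ and $C=wo\text{-}\lim C_n$ become legitimate. In your write-up, the ``stabilizing matrix coefficients'' should be identified with values of $q$ rather than with $\langle C\eta,\xi\rangle$; with that repair your argument closes and is essentially the paper's proof.
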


\begin{proof}
  To be able to define the operator~$C$, first we have to ensure that
  \begin{equation}
    \label{DomainProp}
    A \beta_0 (D_0) [\dom (D_0)] \subseteq \dom (f(D_1) \, \beta_1(D_1)).
  \end{equation}
  To this end, we shall consider the bilinear form
  \begin{multline*}
    q(\xi, \eta) = \langle A \beta_0( D_0) \, \xi, \bar \beta_1 (D_1) \,
    \bar f(D_1) \, \eta \rangle \\ - \langle \beta_1 (D_1) \, A \beta_0
    (D_0) \, f (D_0) \, \xi, \eta \rangle, \ \ \xi \in \dom (D_0),\ \eta
    \in \dom (D_1).
  \end{multline*}
  Consider the spectral projections~$E^j_n = E^{D_j}[-n, n]$.  Let $$
  B_n = E^1_n \, \alpha_1 (D_1) \, \left[ D_1 \, A - A \, D_0 \right] \,
  \alpha_0(D_0) \, E^0_n $$ and $$ C_n = E^1_n \, \beta_1(D_1)\, \left[
    f(D_1)\, A - A\, f(D_0) \right]\, \beta_0(D_0) \, E^0_n. $$ We
  obtain from Lemma~\ref{PerturbationLemma} that $$ C_n = T_\phi (B_n).
  $$ Since the set of operators~$\{B_n\}_{n \geq 1}$ is uniformly bounded
  and~$\phi \in \Phi(\aM)$, this implies that the operators~$C_n$ are
  also uniformly bounded.  Furthermore, we see that $$ \lim_{n
    \rightarrow \infty} \langle C_n \xi, \eta \rangle = q(\xi, \eta),\ \
  \xi \in \dom (D_1),\ \eta \in \dom(D_0). $$ Consequently, the
  form~$q(\xi, \eta)$ is bounded and therefore we
  have~(\ref{DomainProp}).  Thus, the operator~$C$ is properly defined
  and bounded.  Moreover, since~$\dom(D_j)$, $j=0,1$ are dense in~$\sH$,
  we also have that $$ wo -\lim_{n \rightarrow \infty} C_n = C. $$
  Observe that we also have that $$ wo - \lim_{n \rightarrow \infty} B_n
  = B. $$ Since the operator~$T_\phi$ is continuous with respect to the
  weak operator topology (see~\cite[Lemma~2.4 and the proof of
  Proposition~2.6]{PoSu}), we finally obtain that $$ T_\phi (B) = C. $$
\end{proof}

\begin{remark}
  \label{OperNormCont} 
  It is clearly seen from Theorem~\ref{PerturbationInM} that, if~$\psi_f
  \in \Phi(\aM)$, then the function~$f$ maps (uniformly) operator norm
  continuous paths into themselves.  On the other hand, we know
  from~\cite[Theorem~4]{PoSuNGapps} that, for every function~$f: \Rl
  \mapsto \Cx$ such that $$ \left\| f \right\|_{\Lambda_\theta} +
  \left\| f' \right\|_{\Lambda_\epsilon} < + \infty,\ \ 0 \leq \theta <
  1,\ 0 < \epsilon \leq 1, $$ we have~$\psi_f \in \mathfrak{C}_0
  \subseteq \Phi(\aM)$.  Here~$\Lambda_\theta$ is the semi-norm on
  functions on $\Rl$ given by $$ \left\| f \right\|_{\Lambda_\theta} =
  \sup_{x_1, x_2} \frac {\left| f(x_1) - f(x_2) \right|}{\left| x_1 -
      x_2 \right|}. $$ Thus, in particular, every $C^2$-function maps
  operator norm (respectively, trace norm) continuous paths into
  operator norm (respectively, trace norm) continuous paths.
\end{remark}

Finally, we complete the proof of Theorems~\ref{SmallLoopsSF}
and~\ref{thm:SFformulaI} by establishing the following lemmas.

\begin{lemma}
  \label{SmallLoopsSFcompletion}
  Let~$g, h$ be compactly supported $C^2$-functions on $\Rl$ such
  that~$g(x) = 1$ for every~$x$ from some neighbourhood of~$\supp h$ and
  let~$r \mapsto \gamma_r$ be a weakly continuous group of
  $\tau$-invariant $*$-isomorphisms on~$\aM$ with the generator~$\delta: \dom (\delta)
  \mapsto \aM$, $\dom (\delta) \subseteq \aM$.  If~$F, X \in \aM$ are
  self adjoint, then there are families of linear operators~$\left\{ T_s
  \right\}$, $\left\{ T'_s \right\}$ and~$\left\{ T''_s \right\}$
  uniformly bounded on~$\aM$ and on~$L^1(\aM)$ such that

  {  
    \renewcommand{\theenumi}{{\rm \alph{enumi}}}
    
    \begin{enumerate}
    \item $T_s = T_s' + T_s''$; \label{sDOIresolution}
    \item $T_s'(Y) = g(F + srX) \, T'_s\left(Y \right)$, $Y \in \aM$;
      \label{sDOIleftF}
    \item $T_s''(Y) = T''_s \left( Y\right) \, g(F) $, $Y \in \aM$;
      \label{sDOIrightF}
    \item $h(F + s X) - h(F) = T_s (s X)$; \label{sDOIperturbation}
    \item if~$F \in \dom (\delta)$ and~$\lim_{r \rightarrow 0} \left\|
        \gamma_r(F) - F \right\| = 0$, then~$h(F) \in \dom (\delta)$
      and~$\delta(h(F)) = T_0(\delta (F))$;
      \label{sDOIdiff}
    \item $\tau(T_0(Y) Z) = \tau(Y T_0(Z))$, $Y, Z \in \aM$;
      \label{sDOIduality}
    \item $\lim_{s \rightarrow 0} \left\| T'_s(Y) - T'_0(Y) \right\|_1 =
      \lim_{s \rightarrow 0} \left\| T''_s(Y) - T''_0(Y) \right\|_1 = 0$,
      $Y \in L^1(\aM)$. \label{sDOIlimit}
    \end{enumerate}
  }
\end{lemma}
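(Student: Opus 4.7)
\medskip
\noindent\textbf{Proof proposal.}

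The plan is to realise $T_s$, $T'_s$, $T''_s$ as Double Operator Integrals (DOIs) with respect to the spectral measures of $F+sX$ and $F$, splitting the divided difference of $h$ by a smooth cut-off.

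\emph{Step 1 (the decomposition).} Choose $\tilde g\in C_c^2(\Rl)$ with $\tilde g\equiv 1$ on $\supp h$ and $\supp \tilde g\subset\{g=1\}$, so $\tilde g h=h$ and $g\tilde g=\tilde g$. Since $h=\tilde g h$,
\[
 h(\lambda)-h(\mu)=\tilde g(\lambda)\bigl[h(\lambda)-h(\mu)\bigr]+\bigl[\tilde g(\lambda)-\tilde g(\mu)\bigr]h(\mu),
\]
and dividing by $\lambda-\mu$ gives the clean decomposition
\[
 \psi_h(\lambda,\mu)=\phi'(\lambda,\mu)+\phi''(\lambda,\mu),\qquad \phi':=\tilde g(\lambda)\psi_h,\quad \phi'':=\psi_{\tilde g}\cdot h(\mu).
\]
Define $T_s:=T_{\psi_h}(F+sX,F)$, $T'_s:=T_{\phi'}(F+sX,F)$, $T''_s:=T_{\phi''}(F+sX,F)$. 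By Remark~\ref{OperNormCont}, $\psi_h,\psi_{\tilde g}\in\mathfrak{C}_0\subset\mathfrak{A}_0$, and multiplication by bounded Borel functions keeps $\phi',\phi''$ in $\mathfrak{A}_0$, so Corollary~\ref{AzeroClass} yields $s$-independent operator bounds on both $\aM$ and $L^1(\aM)$.

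\emph{Step 2 (properties (a)--(d)).} Item (a) is the decomposition $\psi_h=\phi'+\phi''$ together with the linearity of $\phi\mapsto T_\phi$. The homomorphism property in Theorem~\ref{HomomorphismResult} gives $T'_s(Y)=\tilde g(F+sX)\,T_{\psi_h}(Y)$, hence
\[
 g(F+sX)T'_s(Y)=g(F+sX)\tilde g(F+sX)T_{\psi_h}(Y)=\tilde g(F+sX)T_{\psi_h}(Y)=T'_s(Y)
\]
using $g\tilde g=\tilde g$, which is (b). Symmetrically, $T''_s(Y)=T_{\psi_{\tilde g}}(Y)\,h(F)$ and the identity $hg=h$ gives (c). Finally, (d) is Theorem~\ref{PerturbationInM} applied with $f=h$ and trivial zero-order factors.

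\emph{Step 3 (properties (e) and (f)).} For (f), symmetry $\psi_h(\lambda,\mu)=\psi_h(\mu,\lambda)$ together with the fact that both spectral measures in $T_0$ equal $dE^F$ allows one to swap $\lambda\leftrightarrow\mu$ and $Y\leftrightarrow Z$ in the defining identity~(\ref{DOIdefFmla}); combined with $\tau(AB)=\tau(BA)$ this yields $\tau(T_0(Y)Z)=\tau(YT_0(Z))$. For (e), apply Theorem~\ref{PerturbationInM} with the first operator replaced by $\gamma_r(F)$, using $\gamma_r(h(F))=h(\gamma_r(F))$ (since $\gamma_r$ commutes with functional calculus), to get
\[
 \gamma_r(h(F))-h(F)=T_{\psi_h}\bigl(\gamma_r(F),F\bigr)\bigl(\gamma_r(F)-F\bigr).
\]
Dividing by $r$: the hypotheses $F\in\dom(\delta)$ and $\|\gamma_r(F)-F\|\to 0$ force $r^{-1}(\gamma_r(F)-F)\to\delta(F)$ in norm, while norm continuity of $A\mapsto T_{\psi_h}(A,F)$ on bounded sets (a consequence of $\psi_h\in\mathfrak{C}_0$ and Corollary~\ref{AzeroClass}) lets one pass to the limit, yielding both $h(F)\in\dom(\delta)$ and $\delta(h(F))=T_0(\delta(F))$.

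\emph{Step 4 (the main obstacle: (g)).} The $L^1$-continuity of $T'_s,T''_s$ at $s=0$ is the crux of the lemma, since uniform DOI bounds alone do not imply continuity. Using $\phi'\in\mathfrak{C}_0$, fix an integral representation $\phi'(\lambda,\mu)=\int_S\alpha_\sigma(\lambda)\beta_\sigma(\mu)\,d\nu(\sigma)$ with $\alpha_\sigma,\beta_\sigma$ continuous and $\int_S\|\alpha_\sigma\|_\infty\|\beta_\sigma\|_\infty\,d\nu(\sigma)<\infty$; then
\[
 T'_s(Y)=\int_S\alpha_\sigma(F+sX)\,Y\,\beta_\sigma(F)\,d\nu(\sigma).
\]
Since $F+sX\to F$ in norm and each $\alpha_\sigma$ is continuous, $\alpha_\sigma(F+sX)\to\alpha_\sigma(F)$ in $\aM$; for $Y\in L^1(\aM)$ the integrands converge in trace norm pointwise in $\sigma$, dominated by $2\|\alpha_\sigma\|_\infty\|\beta_\sigma\|_\infty\|Y\|_1$, and the Lebesgue dominated convergence theorem gives $T'_s(Y)\to T'_0(Y)$ in $L^1(\aM)$. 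The same argument handles $T''_s$.
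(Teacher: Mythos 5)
Your overall architecture mirrors the paper's: realise $T_s$, $T'_s$, $T''_s$ as double operator integrals $T_{\psi_h}(F+sX,F)$ and split $\psi_h$ by an interposed cut-off (your $\tilde g$ plays the role of the paper's $g_1$), so that one factor carries the variable $\lambda$ and lives on $\supp g$, and the other carries $\mu$.  Parts (a)--(d) and (f) are handled essentially as in the paper, and your Step~4 argument for (g) — using the $\mathfrak{C}_0$ integral representation together with dominated convergence — is a direct, self-contained substitute for the paper's citation of the $L^1$-continuity result.

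There is, however, a genuine gap in your treatment of (e).  You assert that the hypotheses force $r^{-1}\bigl(\gamma_r(F)-F\bigr)\to\delta(F)$ in \emph{operator norm}.  This is false.  The group $\gamma_r$ is only weakly continuous, and $\dom(\delta)$ is the weak$^*$ (ultraweak) domain of its generator; the paper makes this explicit in the characterisation
\[
F\in\dom(\delta)\ \Longleftrightarrow\ \lim_{t\to0}\tau\left(\frac{\gamma_t(F)-F}{t}\,Y\right)=\tau\bigl(\delta(F)\,Y\bigr),\quad Y\in L^1(\aM).
\]
The extra hypothesis $\|\gamma_r(F)-F\|\to0$ gives norm \emph{continuity} at $r=0$, not norm differentiability, so the difference quotient is only known to be uniformly bounded in operator norm and convergent in the $\sigma(\aM,L^1)$-topology.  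Consequently ``passing to the limit'' in $\gamma_r(h(F))-h(F)=S_r\bigl(\gamma_r(F)-F\bigr)$ cannot be done in operator norm; the limit must be taken after pairing against an arbitrary $Y\in L^1(\aM)$, using that the predual operator $S_0^*$ is bounded on $L^1(\aM)$ and that $r\mapsto S_r^*$ is continuous in the $L^1$-strong sense (which is exactly what your Step~4 machinery, or the cited result, provides).  Relatedly, your appeal to Corollary~\ref{AzeroClass} for ``norm continuity of $A\mapsto T_{\psi_h}(A,F)$'' is a misattribution: that corollary gives uniform boundedness of $T_\phi$ in the underlying operators, not continuity in them.  The correct route for (e) is precisely the dual pairing argument; as written, your Step~3 for (e) would not close.
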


\begin{proof}
  We set~$T_s = T_{\psi_h} (F + sX, F)$.  It follows
  from~\cite[Corollary~7.6]{PS-DiffP} (see
  also~\cite[Theorem~4]{PoSuNGapps}) that~$\psi_h \in \mathfrak{C}_0$.
  Consequently, we readily see that~(\ref{sDOIperturbation}) follows
  from~\cite[Corollary~7.2]{PSW-DOI} (or Theorem~\ref{PerturbationInM});
  and~(\ref{sDOIduality}) --- from~\cite[Lemma~2.4]{PoSu}.

  Let~$g_1$ be a compactly supported $C^2$-function such that~$g_1(x) =
  1$ when~$x \in \supp h$ and~$g(x) = 1$ when~$x \in \supp g_1$.  We set
  $$ \psi_1(\lambda, \mu) = g_1(\lambda) \, \psi_h(\lambda, \mu) \ \
  \text{and}\ \ \psi_2 = \psi_h - \psi_1. $$ We also set~$T'_s =
  T_{\psi_1}(F + sX, F)$ and~$T''_s = T_{\psi_2} (F+ sX, F)$.  We
  instantly have~(\ref{sDOIresolution}).
  
  Note that~$\psi_1, \psi_2 \in \mathfrak{C}_0$.  Consequently,
  (\ref{sDOIlimit}) follows from~\cite[Lemma~5.14]{PS-DiffP}.

  We readily have from the construction that 
  \begin{equation}
    \label{PSIiProp}
    \psi_1(\lambda, \mu) =
    g(\lambda)\, \psi_1 (\lambda, \mu). 
  \end{equation}
  Furthermore, it may be observed that we also have
  \begin{equation}
    \label{PSIiiProp}
    \psi_2(\lambda, \mu) = \psi_2(\lambda, \mu)\,
    g(\mu). 
  \end{equation}
  Indeed, since the function~$h$ is compactly supported, the
  function~$\psi_h$ is supported in the cross~$S_1 \cup S_2 \subseteq
  \Rl \times \Rl$, where the strips~$S_j$, $j=1,2$ are given by $$ S_1 =
  \left\{ (\lambda, \mu):\ \ \lambda \in \supp h \right\},\ \ S_2
  \left\{ (\lambda, \mu):\ \ \mu \in \supp h \right\}. $$ By
  construction the function~$\psi_1$ coincides with~$\psi_h$ on the
  strip~$S_1$, i.e. $$ \psi_1(\lambda, \mu) = \psi_h(\lambda, \mu),\ \
  (\lambda, \mu) \in S_1. $$ Consequently, the function~$\psi_2 = \psi_h
  - \psi_1$ is supported within~$S_2$ which
  justifies~(\ref{PSIiiProp}).

  Clearly, (\ref{sDOIleftF}) and~(\ref{sDOIrightF}) follows from
  Theorem~\ref{HomomorphismResult} and~(\ref{PSIiProp})
  and~(\ref{PSIiiProp}).

  Let us show~(\ref{sDOIdiff}) (we refer the reader to~\cite{PS-RFlow,
    PotapovThesis} for a more complete study of the connection between double
  operator integrals and domains of derivations).  By definition 
  \begin{equation}
    \label{WeakGeneratorDomainDefinition}
    F \in \dom (\delta) \
    \ \Longleftrightarrow\ \ \lim_{t \rightarrow 0} \tau \left( \frac
      {\gamma_t(F) - F}{t} \, Y \right) = \tau \left( \delta(F)\, Y
    \right),\ \ Y \in L^1(\aM). 
  \end{equation}
  From~\cite[Corollary~7.2]{PSW-DOI}, we obtain that $$ h(\gamma_t(F)) -
  h(F) = S_t (\gamma_t(F) - F), $$ where~$S_t = T_{\psi_h} (\gamma_t(F),
  F)$.  Observe also that the family~$\left\{ S_t \right\}$ is different
  from~$\left\{ T_s \right\}$.  However,~$S_0 = T_0$.  Now,
  \begin{multline*}
    \tau \left( Y\, \left[ \frac {\gamma_t(h(F)) - h(F)}{t} -
        S_0(\delta(F)) \right]\right) \\ = \tau \left( Y\, S_0 \left[
        \frac {\gamma_t(F) - F}{t} - \delta(F) \right] \right) + \tau
    \left( Y\, \left[ \left( S_t - S_0 \right) \frac {\gamma_t (F) -
          F}{t} \right] \right) \\ = \tau \left( S_0^* (Y)\, \left[
        \frac {\gamma_t(F) - F}{t} - \delta(F) \right] \right) + \tau
    \left( (S_t^* - S_0^*) (Y)\, \frac {\gamma_t(F) - F}{t}  \right).
  \end{multline*}
  Letting~$t \rightarrow 0$, we see that the first term vanishes due to
  the fact that~$S_0^*$ is bounded on~$L^1(\aM)$ (see
  \cite[Lemma~2.4]{PoSu}) and~$F \in \dom (\delta)$.  Noting that the
  dual\footnote{Here, we consider dual operator~$S_t^*$ restricted
    on~$L^1(\aM) \subseteq \aM^*$, see details in~\cite{PoSu}.}
  family~$\left\{ S^*_t \right\}$ is a family of double operator
  integrals bounded on~$L^1(\aM)$ (see~\cite[Lemma~2.4]{PoSu}) and the
  family $$ \left\{ \frac {\gamma_t(F) - F}{t} \right\} $$ is uniformly
  bounded with respect to the operator norm
  (see~(\ref{WeakGeneratorDomainDefinition})), the second term vanishes
  due to \cite[Lemma~5.14]{PS-DiffP}.  Thus, according
  to~(\ref{WeakGeneratorDomainDefinition}) $h(F) \in \dom (\delta)$
  and~$\delta(h(F)) = T_0 (\delta(F))$.  
\end{proof}

\begin{lemma}
  \label{TraceLemma}
  Let~$\phi \in \uC_0$ such that~$\phi(\lambda, \mu) = \phi(\mu,
  \lambda)$, $\lambda, \mu \in \Rl$ and such that~$\phi$ is supported in
  a square~$I \times I$, where~$I$ is an interval.  Let~$D$ be a linear
  self adjoint operator affiliated with $\aM$ and let~$T = T_{\phi}(D, D)$ be a double
  operator integral (see~\cite{PS-DiffP}) which is a bounded linear
  operator on~$\aM$.  If\/~$E = \chi_I(D)$ is $\tau$-finite, then, for
  every~$V \in \aM$,
  \begin{equation}
    \label{TraceLemmaId}
    \tau\left( T(V)
    \right) = \tau \left( f(D)\, V \right),\ \ \text{where~$f(\lambda) =
      \phi(\lambda, \lambda)$}.
  \end{equation}
\end{lemma}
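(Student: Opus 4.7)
The plan is to exploit the integral representation valid for $\phi\in\uC_0$, localize everything by the $\tau$-finite projection $E$, and finish by cyclicity of the trace.

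First, since $\supp\phi\subseteq I\times I$ we may write $\phi(\lambda,\mu)=\chi_I(\lambda)\,\phi(\lambda,\mu)\,\chi_I(\mu)$. The homomorphism property of $\psi\mapsto T_\psi$ (Theorem~\ref{HomomorphismResult}), applied to each factor, gives
$$T_\phi(V)\;=\;E\,T_\phi(V)\,E\;=\;T_\phi(EVE)\qquad\text{for every }V\in\aM.$$
Since $\tau(E)<\infty$, this already shows that $T_\phi(V)\in L^1(\aM)$ (so the left side of~(\ref{TraceLemmaId}) is meaningful), that $EVE\in\sL^2\cap L^1(\aM)$, and that $\tau(T_\phi(V))=\tau\bigl(E\,T_\phi(EVE)\bigr)$.

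Next, using $\phi\in\uC_0$, fix a representation $\phi(\lambda,\mu)=\int_S\alpha_s(\lambda)\beta_s(\mu)\,d\nu(s)$ with $\int_S\|\alpha_s\|_\infty\|\beta_s\|_\infty\,d\nu(s)<\infty$. Plugging this into the defining identity~(\ref{DOIdefFmla}) with $K_1=E$ and $K_2=EVE$ (both in $\sL^2$), and interchanging the two integrations by Fubini (justified by $\|EVE\|_1\int_S\|\alpha_s\|_\infty\|\beta_s\|_\infty d\nu(s)<\infty$), yields
$$\tau\bigl(E\,T_\phi(EVE)\bigr)\;=\;\int_S \tau\bigl(E\,\alpha_s(D)\,EVE\,\beta_s(D)\bigr)\,d\nu(s).$$
Because $E$ commutes with each $\alpha_s(D)$ and $E^2=E$, trace cyclicity collapses the integrand to $\tau\bigl((\alpha_s\beta_s)(D)\,EVE\bigr)$. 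Pulling the $\nu$-integral inside $\tau$ (again by absolute convergence) and recalling $\phi(\lambda,\lambda)=\int_S\alpha_s(\lambda)\beta_s(\lambda)\,d\nu(s)=f(\lambda)$ gives $\tau(f(D)\,EVE)$.

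To finish, observe that $\supp\phi\subseteq I\times I$ forces $\supp f\subseteq I$, so $f(D)=E\,f(D)\,E$, and a last application of cyclicity gives $\tau(f(D)\,EVE)=\tau(f(D)\,V)$, which is~(\ref{TraceLemmaId}). The only mildly subtle point is the exchange of the $\nu$-integral with the bilinear form $\tau(K_1\,dE_\lambda\,K_2\,dE_\mu)$ in the second step. The representation of $\phi\in\uC_0$ does \emph{not} automatically produce $\alpha_s,\beta_s$ supported in $I$, but this is immaterial: the localization was built in via the substitution $V\mapsto EVE$ carried out at the start, after which everything takes place on a $\tau$-finite corner of $\aM$ and the remaining manipulations are routine.
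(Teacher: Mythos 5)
Your proof is correct, and it reaches the identity by a somewhat different route than the paper does. The paper's proof (which is very short) observes that the $\uC_0$-representation gives $T(\id) = f(D)$, and then combines this with the \emph{self-duality} of $T$ (i.e.\ $\tau(T(A)\,B)=\tau(A\,T(B))$, cited from \cite[Lemma~2.4]{PoSu}) and the localization $T(EV)=T(V)$ to get $\tau(f(D)V)=\tau(T(\id)\,EV)=\tau(T(EV))=\tau(T(V))$ in one line. You instead unwind the defining bilinear identity~(\ref{DOIdefFmla}) directly with $K_1=E$, $K_2=EVE$, use the $\uC_0$-representation of $\phi$ together with Fubini and trace cyclicity, and arrive at the same conclusion. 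Both proofs rely on the same two ingredients --- the $\uC_0$-representation of $\phi$ and the homomorphism property of $\psi\mapsto T_\psi$ used to localize to the $\tau$-finite corner $E\aM E$ --- but the paper packages the duality step into a cited lemma whereas you re-derive it in situ; your argument is thus more self-contained (and incidentally shows the symmetry hypothesis $\phi(\lambda,\mu)=\phi(\mu,\lambda)$ is not really needed here), at the cost of a longer computation. One small inaccuracy: to justify Fubini the correct bound on the total variation of the measure $\tau(E\,dE_\lambda\,EVE\,dE_\mu)$ is $\left\|E\right\|_2\left\|EVE\right\|_2$ (as stated in the paper just after~(\ref{VTfunction}) in Section~5, via \cite[Remark~3.1]{PSW-DOI}), not $\left\|EVE\right\|_1$; but both quantities are finite, so the interchange is still justified.
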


\begin{proof}
  Observe first that 
  \begin{equation}
    \label{TraceLemmaTmpI}
    T(1) = f(D).
  \end{equation}
  Indeed, if $\phi \in \uC_0$, then there is a measure space~$(S, \nu)$
  and continuous functions~$\alpha_s$ and~$\beta_s$, $s \in S$, such
  that $$ \phi(\lambda, \mu) = \int_S \alpha_s(\lambda)\, \beta_s(\mu)\,
  d\nu(s) $$ and $$ T(A) = \int_S \alpha_s(D) \, A \, \beta_s(D) \, d\nu
  (s), \ \ A \in \aM, $$ where $$ \int_S \left\| \alpha_s
  \right\|_\infty\, \left\| \beta_s \right\|_\infty \, d\nu(s) < +
  \infty. $$ Consequently,~(\ref{TraceLemmaTmpI}) follows from $$ T(1) =
  \int_S \alpha_s(D)\, 1 \, \beta_s(D)\, d\nu(s) = f(D). $$

  Since the operator~$T$ is self-dual (see~\cite[Lemma~2.4]{PoSu}), we
  obtain that $$ \tau \left( f(D) \, V \right) = \tau \left( f(D) \, E V
  \right) = \tau \left( T(1) \, E V \right) = \tau \left( T(EV) \right)
  = \tau \left( T(V) \right), $$ where the last identity follows from
  Theorem~\ref{HomomorphismResult}.  
\end{proof}

\section{Paths of self adjoint linear operators smooth in graph norm.}
\label{sec:paths-self-adjoint}

As we observed in Section~\ref{sec:integr-form-spectr}, analytic
spectral flow formulae for paths of unbounded self adjoint linear
operators are deduced from corresponding formulae
for paths of bounded Fredholm operators via the
mapping~$D \mapsto \vartheta (D)$, where the function~$\vartheta$ is
given in~(\ref{VTfunction}).  Consequently, the question of smoothness
properties of this mapping become of significant importance.  This
question has been studied deeply in~\cite{CPS, Sukochev2000-MR1767406,
  PoSuNGapps, CarPhi1998-MR1638603, CarPhi2004-MR2053481, Wahl}.

The main result of the present Section is that the function~$x \mapsto
\vartheta (x)$ maps $C^1_\Gamma$-paths onto~$C^1$-paths with respect to
the operator norm (see Theorem~\ref{ContDiffTheorem}).  This answers the
question asked in~\cite[p.~21]{Wahl}.  The proof is based on the
following observation, which is a development of the technique presented
in~\cite{PoSuNGapps}.  For every pair of self adjoint operators~$D_{j}$,
$j=0,1$, such that $D_1-D_0\in \Bd(\sH)$ there is a linear\footnote{In the present section, we shall
  consider only the operator norm.  Hence, we do not need an abstract
  von Neumann algebra.  Instead,~$\Bd(\sH)$ will suffice.} operator~$T$
on~$\Bd(\sH)$ such that
\begin{equation}
  \label{BasicIdea}
  F_1 - F_0 = T \left[ \left( D_1 - D_0
    \right)\, (1 + D_0^2)^{- \frac 12} \right], 
\end{equation}
where the operators~$F_j$ are given\footnote{When the operator~$D_1 -
  D_0$ is bounded and the resolvent of~$D_0$ is $\sE$-summable (i.e.,
  $\left( 1 + D_0^2 \right)^{- \frac 12} \in \sE$), then, since the
  operator~$T$ is bounded on the space~$\sE$, the
  identity~(\ref{BasicIdea}) yields that $$ \left\| F_1 - F_0
  \right\|_{\sE} \leq c\, \left\| D_1 - D_0 \right\| \, \left\| (1 +
    D_0^2)^{- \frac 12} \right\|_{\sE}. $$ The latter is proved
  in~\cite[Theorem~17]{PoSuNGapps} for an arbitrary symmetric
  ideal~$\sE$.  } by~$F_j = \vartheta (D_j)$, $j = 0,1$.  In the present
section, we shall further develop the above construction under the
weaker assumption that~$\dom (D_0) \subseteq \dom (D_1)$ and the
operator $$ (D_1 - D_0) \, (1 + D_0^2)^{-\frac 12} $$ is bounded which
is equivalent to the operator~$D_1 - D_0$ being bounded with respect to
the graph norm of the operator~$D_0$ (see Lemma~\ref{DboundedChar}
below).


Now let us state the two major results of the present Section.

\begin{theorem}
  \label{DiffTheorem}
  If\/~$\left\{ D_t \right\}$ is a collection of self adjoint operators
  $\Gamma$-differentiable (see Definition~\ref{GammaSmoothness}) at the
  point~$t = 0$, then the collection~$\left\{ F_t \right\}$, $F_t =
  \vartheta (D_t)$ is differentiable with respect to the operator norm
  at the point~$t = 0$.
\end{theorem}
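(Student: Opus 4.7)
The plan is to represent $F_t - F_0$ as a double operator integral applied to an operator which, after dividing by $t$, converges in norm to $\dot D_0(1+D_0^2)^{-1/2}$, and then to pass to the limit inside the DOI. The candidate derivative is $T_\phi(D_0,D_0)\bigl[\dot D_0(1+D_0^2)^{-1/2}\bigr]$ for an explicit symbol $\phi$ constructed below.

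First I apply Theorem~\ref{PerturbationInM} to the operators $D_0, D_t$ with $A = 1$, $f = \vartheta$, $\alpha_0(\mu) = (1+\mu^2)^{-1/2}$, and $\alpha_1 = \beta_0 = \beta_1 = 1$. This yields the identity
\begin{equation*}
F_t - F_0 \;=\; T_\phi(D_t, D_0)\bigl[(D_t - D_0)(1+D_0^2)^{-1/2}\bigr],
\end{equation*}
where
\begin{equation*}
\phi(\lambda,\mu) \;=\; \psi_\vartheta(\lambda,\mu)\,(1+\mu^2)^{1/2}.
\end{equation*}
The factor $(1+\mu^2)^{1/2}$ cancels the decay of $\psi_\vartheta$ as $\mu \to \pm\infty$, and a direct estimation (or an appeal to the criterion in~\cite[Theorem~4]{PoSuNGapps}) shows that $\phi$ lies in $\uC_0$; in particular, by Corollary~\ref{AzeroClass}, the family of operators $T_\phi(D_t,D_0)$ is uniformly bounded on $\Bd(\sH)$.

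Next, write $G_t = t^{-1}(D_t-D_0)(1+D_0^2)^{-1/2}$ and $G = \dot D_0(1+D_0^2)^{-1/2}$; by the $\Gamma$-differentiability assumption at $t=0$, $\|G_t - G\| \to 0$. I decompose the difference quotient as
\begin{equation*}
\frac{F_t - F_0}{t} - T_\phi(D_0,D_0)[G] \;=\; T_\phi(D_t,D_0)[G_t - G] \;+\; \bigl(T_\phi(D_t,D_0) - T_\phi(D_0,D_0)\bigr)[G].
\end{equation*}
The first summand vanishes in operator norm by the uniform boundedness of the family $T_\phi(D_t,D_0)$ combined with $\|G_t - G\| \to 0$.

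The main obstacle is the second summand: one must show that $T_\phi(D_t,D_0)[G] \to T_\phi(D_0,D_0)[G]$ in operator norm. The representation above with uniformly bounded $T_\phi(D_t,D_0)$ and $\|(D_t-D_0)(1+D_0^2)^{-1/2}\| \to 0$ gives immediately $\|F_t - F_0\| \to 0$. To exploit this I push the DOI forward through the change of variables $\lambda = \vartheta^{-1}(y)$, rewriting $T_\phi(D_t,D_0) = T_{\tilde\phi}(F_t,F_0)$ for a symbol $\tilde\phi$ on $[-1,1]\times[-1,1]$; the factor $(1+\mu^2)^{1/2}$ was precisely engineered so that $\tilde\phi$ extends continuously to the closed square (including the essential-spectrum points $\pm 1$ of the $F_t$). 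One then invokes the operator-norm continuity of DOIs with $\uC_0$-symbols under operator-norm convergence of the underlying self adjoint operators, in the spirit of \cite[Lemma~5.14]{PS-DiffP}, to conclude $\bigl\|(T_\phi(D_t,D_0) - T_\phi(D_0,D_0))[G]\bigr\| \to 0$. This gives operator-norm differentiability of $F_t$ at $t=0$ with derivative $T_\phi(D_0,D_0)\bigl[\dot D_0(1+D_0^2)^{-1/2}\bigr]$.
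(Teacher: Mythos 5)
Your opening move is in the right spirit: express $F_t-F_0$ as a DOI applied to $(D_t-D_0)(1+D_0^2)^{-1/2}$, with the symbol $\phi(\lambda,\mu)=\psi_\vartheta(\lambda,\mu)(1+\mu^2)^{1/2}$, and then divide by $t$. This is exactly the symbol the paper uses (its equation~(\ref{PhiDef})). But there is a fatal gap at the very next step.

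You assert that $\phi\in\uC_0$ and hence, by Corollary~\ref{AzeroClass}, that the operators $T_\phi(D_t,D_0)$ are uniformly bounded on $\Bd(\sH)$. This is false, and the paper says so in so many words: ``The difficulty about this operator is the fact that it is not bounded on~$\Bd(\sH)$. Thus, a direct application of the methods of double operator integrals and harmonic analysis is not feasible.'' The criterion \cite[Theorem~4]{PoSuNGapps} you invoke applies to divided differences $\psi_f$ of a \emph{single} function $f$; $\phi$ is $\psi_\vartheta$ multiplied by the unbounded, purely-$\mu$-dependent factor $(1+\mu^2)^{1/2}$, and this asymmetric weight destroys the $\uC_0$-membership. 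Consequently, your application of Theorem~\ref{PerturbationInM} is also not licensed (it requires $\phi\in\Phi(\aM)$), and the uniform boundedness that drives the first term of your difference-quotient decomposition is unjustified. The paper's entire apparatus in Section~6 — the subspace $L(D_0)$ of operators that factor as a symmetric operator times $(1+D_0^2)^{-1/2}$, Lemma~\ref{ToSymLemma}, the symmetrized symbol $\psi$ of equation~(\ref{PsiDef}) which \emph{is} a bounded DOI, and the definition~(\ref{TphiDef}) of $T_\phi$ as $T_\psi$ pre-composed with $B\mapsto(1+D_1^2)^{-1/4}B(1+D_0^2)^{1/4}$ — exists precisely to work around this obstruction. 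You have not engaged with it.

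A second gap concerns the continuity $T_\phi(D_t,D_0)[G]\to T_\phi(D_0,D_0)[G]$. Your change-of-variables idea (pushing the DOI to a symbol $\tilde\phi$ on $[-1,1]^2$ and invoking continuity under operator-norm convergence of the spectral data) relies on $\tilde\phi$ extending continuously to the closed square, which you claim the factor $(1+\mu^2)^{1/2}$ was ``engineered'' to achieve. In fact one can check directly that the transformed symbol fails to be continuous at the off-diagonal corners $(\pm1,\mp1)$ — its limit there is path-dependent — so the appeal to \cite[Lemma~5.14]{PS-DiffP} does not go through. The paper instead splits the continuity problem into the two limits~(\ref{DiffTheoremAimI}) and establishes them via the concrete quantitative estimates of Lemma~\ref{AuxilliaryEsts} on $\|(1+D_t^2)^{-1/4}(1+D_0^2)^{1/4}-1\|$ and on the imaginary powers $(1+D_t^2)^{is/2}$; these are the real content and cannot be bypassed by the formal argument you sketch.
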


\begin{theorem}
  \label{ContDiffTheorem}
  If~$t \in [0, 1] \mapsto D_t$ is a $C^1_\Gamma$-path (see
  Definition~\ref{GammaSmoothness}) of self adjoint linear operators,
  then~$t \in [0, 1] \mapsto \vartheta (D_t)$ is a $C^1$-path with
  respect to the operator norm, where the function~$\vartheta$ is given
  by~(\ref{VTfunction}).
\end{theorem}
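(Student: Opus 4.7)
The plan is to first write down a closed DOI formula for $\dot F_t$ (when it exists) and then exploit continuity properties of double operator integrals developed in Section~5 to promote pointwise differentiability into $C^1$-regularity in the operator norm.

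First I would apply the generalized perturbation formula (Theorem~\ref{PerturbationInM}) to the pair $D_{t+s}, D_t$ with $f = \vartheta$, $\alpha_0(\mu) = (1+\mu^2)^{-1/2}$ and $\alpha_1, \beta_0, \beta_1 \equiv 1$. Writing $F_t = \vartheta(D_t)$, this rearranges to
\begin{equation*}
  F_{t+s} - F_t = T_\phi(D_{t+s}, D_t)\bigl[(D_{t+s} - D_t)(1+D_t^2)^{-\frac 12}\bigr],
\end{equation*}
where $\phi(\lambda,\mu) = \psi_\vartheta(\lambda,\mu)\,(1+\mu^2)^{\frac 12}$. The first technical point is to verify $\phi \in \Phi(\aM)$; since $\vartheta$ and all its derivatives decay appropriately at $\pm\infty$, a direct computation (or a change of variable as below) places $\phi$ in $\mathfrak{C}_0$.

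Second, dividing by $s$ and letting $s \to 0$, the $C^1_\Gamma$-assumption gives that the bracket converges in operator norm to $\dot D_t(1+D_t^2)^{-1/2}$, while Theorem~\ref{DiffTheorem} already guarantees the left-hand side converges; standard DOI continuity (cf.\ \cite[Lemma~5.14]{PS-DiffP} as invoked in Lemma~\ref{SmallLoopsSFcompletion}(\ref{sDOIlimit})) identifies the limit of the right-hand side and yields the explicit formula
\begin{equation*}
  \dot F_t = T_\phi(D_t, D_t)\bigl[\dot D_t(1+D_t^2)^{-\frac 12}\bigr].
\end{equation*}

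Third, to establish continuity of $t \mapsto \dot F_t$, I would split
\begin{equation*}
  \dot F_{t+h} - \dot F_t = T_\phi(D_{t+h}, D_{t+h})\bigl[A_{t+h} - A_t\bigr] + \bigl(T_\phi(D_{t+h}, D_{t+h}) - T_\phi(D_t, D_t)\bigr)[A_t],
\end{equation*}
where $A_t := \dot D_t(1+D_t^2)^{-1/2}$. The first term tends to zero because $t \mapsto A_t$ is norm continuous by the $C^1_\Gamma$-hypothesis and the DOIs $T_\phi(D_t, D_t)$ are uniformly bounded on $\aM$ (Corollary~\ref{AzeroClass}). For the second term, the useful observation is that $\vartheta : \Rl \to (-1,1)$ is a homeomorphism, so the spectral measure of $D_t$ is the pushforward of that of $F_t$; hence $T_\phi(D_t,D_t) = T_{\tilde\phi}(F_t,F_t)$ for $\tilde\phi(x,y) = \phi(\vartheta^{-1}(x),\vartheta^{-1}(y))$, and $\tilde\phi$ is a smooth (compactly supported on $[-1,1]^2$ after extension) kernel in $\mathfrak{C}_0$. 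Since $t \mapsto F_t$ is norm continuous (Remark~\ref{OperNormCont}), the dependence of $T_{\tilde\phi}(F_t,F_t)$ on $t$ reduces to DOI continuity for \emph{bounded} self-adjoint operators varying norm-continuously, which is established in \cite[Lemma~5.14]{PS-DiffP}.

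The main obstacle is exactly this last step: showing that $T_\phi(D_s, D_s) \to T_\phi(D_t, D_t)$ in an appropriate sense as $s \to t$, despite the $D_t$ being unbounded. The change-of-variable reduction to $F_t$ is what makes this tractable, converting the unbounded DOI continuity question into the bounded one handled in Section~5.
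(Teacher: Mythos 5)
Your overall strategy — a perturbation/DOI formula for $\vartheta(D_t)-\vartheta(D_s)$, then a decomposition argument to upgrade pointwise differentiability to $C^1$ — is the same as the paper's, and your third step (splitting $\dot F_{t+h}-\dot F_t$ into a ``moving argument'' term and a ``moving operator'' term) matches the structure of the paper's decomposition. However, the central technical claim on which everything rests is unjustified and, as far as I can tell, false.

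You assert that the function $\phi(\lambda,\mu)=\psi_\vartheta(\lambda,\mu)(1+\mu^2)^{1/2}$ (equivalently, the change-of-variable function $\tilde\phi(x,y)=\phi(\vartheta^{-1}(x),\vartheta^{-1}(y))$ on $(-1,1)^2$) lies in $\uC_0$, and hence that $T_\phi(D_t,D_t)$ is a bounded operator on all of $\Bd(\sH)$. This is precisely the difficulty that the paper flags explicitly in Section~6: ``The difficulty about this operator is the fact that it is not bounded on~$\Bd(\sH)$.'' In fact $\tilde\phi$ does not extend to a $C^1$ function on $[-1,1]^2$: for fixed $|x|<1$ one computes, with $\mu=\vartheta^{-1}(y)\to\infty$,
\begin{equation*}
  \tilde\phi(x,y)=\frac{x-\vartheta(\mu)}{\vartheta^{-1}(x)-\mu}\,(1+\mu^2)^{1/2}=(1-x)\bigl(1+O(\mu^{-1})\bigr)=(1-x)\bigl(1+c\sqrt{1-y}+\cdots\bigr),
\end{equation*}
so $\partial_y\tilde\phi$ has a $(1-y)^{-1/2}$ blow-up at $y=1$. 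Thus $\tilde\phi$ is not smooth on $[-1,1]^2$, and there is no easy route to $\uC_0$-membership; indeed the one-sided weight $(1+\mu^2)^{1/2}$ is exactly what makes $T_\phi$ unbounded.

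The paper does not run into this because it never treats $T_\phi$ as a bounded operator on $\Bd(\sH)$. Instead it (a) introduces the symmetrically weighted kernel $\psi(\lambda,\mu)=(1+\lambda^2)^{1/4}\psi_\vartheta(\lambda,\mu)(1+\mu^2)^{1/4}$, whose DOI $T_\psi$ \emph{is} bounded on $\Bd(\sH)$ with a universal bound (cited from~\cite{PoSuNGapps}); (b) restricts attention to the closed real subspace $L(D_0)\subseteq\Bd(\sH)$ of operators of the form $A(1+D_0^2)^{-1/2}$ with $A$ symmetric and $D_0$-bounded (Lemma~\ref{LDclosedness}); and (c) proves, via a three-lines-lemma interpolation argument (Lemma~\ref{MaxPrincipleCor} and Lemma~\ref{ToSymLemma}), that $B\mapsto(1+D_1^2)^{-1/4}B(1+D_0^2)^{1/4}$ is bounded on $L(D_0)$ even though it is not bounded on $\Bd(\sH)$. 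Only then is $T_\phi$ defined, as the composition $T_\phi(B):=T_\psi\bigl((1+D_1^2)^{-1/4}B(1+D_0^2)^{1/4}\bigr)$ on $L(D_0)$, and the $\Gamma$-derivative $G_t=\dot D_t(1+D_t^2)^{-1/2}$ lands in $L(D_t)$ because $\dot D_t$ is symmetric. Your proposal bypasses this entire mechanism; without it, the DOI formula for $\dot F_t$ and the continuity estimates in your second and third steps have no rigorous meaning. A related but smaller issue: the comparison $T_\phi(D_{t+h},D_{t+h})-T_\phi(D_t,D_t)$ needs to be split further into one-variable-at-a-time increments (the paper's three-term decomposition), since the norm-continuity estimates for the auxiliary operators (via the imaginary-power bounds in Lemma~\ref{AuxilliaryEsts}) are only available one argument at a time.
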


\subsection{$D$-bounded operators.}
\label{sec:d_0-bound-oper}

Let~$\sH$ be Hilbert space and let~$D: \dom (D) \mapsto \sH$ be a linear
operator with the domain~$\dom (D) \subseteq \sH$.  A linear operator~$A
: \dom (A) \mapsto \sH$ ($\dom (A) \subseteq \sH$) is called $D$-bounded
if and only if~$\dom (D) \subseteq \dom(A)$ and there is a
constant~$c>0$ such that
\begin{equation}
  \label{DboundedDefEst}
  \left\| A (\xi)
  \right\|_{\sH} \leq c \, \left( \left\| \xi \right\|_{\sH}^2 + \left\|
      D(\xi) \right\|_{\sH}^2 \right)^{\frac 12},\ \ \xi \in \dom (D). 
\end{equation}
We let~$\left\| A \right\|_D$ be the smallest possible constant~$c> 0$
such that~(\ref{DboundedDefEst}) holds.

Observe that if an operator~$A$ is~$D_0$-bounded and~$\left\| A
\right\|_{D_0} < 1$, then the operator~$A$ is also $D$-bounded, where~$D
= A + D_0$.  Indeed, suppose that~$0 < c < 1$ is the constant such
that~(\ref{DboundedDefEst}) holds.  It then follows that $$ \left\| A
  (\xi) \right\|_{\sH} \leq c \, \left( \left\| \xi \right\|_{\sH} +
  \left\| D_0 (\xi) \right\|_{\sH} \right) \leq c \, \left( \left\| \xi
  \right\|_{\sH} + \left\| D(\xi) \right\|_{\sH} + \left\| A(\xi)
  \right\|_{\sH} \right). $$ This implies that $$ \left\| A (\xi)
\right\|_{\sH} \leq \, \frac c{1 - c} \, \left( \left\| \xi
  \right\|_{\sH} + \left\| D(\xi) \right\|_{\sH} \right) \leq \frac {c
  \, \sqrt 2}{1 - c} \, \left( \left\| \xi \right\|_{\sH}^2 + \left\|
    D(\xi) \right\|_{\sH}^2 \right)^{\frac 12}. $$ In other words, 
\begin{equation}
  \label{DnotToDbdd}
  \left\| A \right\|_D \leq \frac {\sqrt 2 \, \left\| A \right\|_{D_0}}
  {1 - \left\| A \right\|_{D_0}}. 
\end{equation}

Furthermore, if~$D_j: \dom(D_j) \mapsto \sH$, $j=0,1$ and~$A: \dom (A)
\mapsto \sH$ are linear operators and~$\left\| D_1 - D_0 \right\|_{D_0}
< 1$, then~$A$ is $D_0$-bounded if and only if~$A$ is $D_1$-bounded.
Indeed, firstly since~$D_1 - D_0$ is $D_0$-bounded, we see that~$\dom
(D_0) = \dom (D_1)$.  Secondly, according to~(\ref{DnotToDbdd}), $D_1 -
D_0$ is also $D_1$-bounded.  Finally, if~$c$ is the $D_0$-norm of~$A$
and~$c'$ is the $D_1$-norm of~$D_1 - D_0$, then
\begin{multline*}
  \left\| A(\xi) \right\|_{\sH} \leq c\, \left( \left\| \xi
    \right\|_{\sH} + \left\| D_0(\xi) \right\|_{\sH} \right) \\ \leq c\,
  \left( \left\| \xi \right\|_{\sH} + \left\| D_1(\xi) \right\|_\sH +
    \left\| (D_1 - D_0) (\xi) \right\|_\sH \right) \\ \leq c \, \left(
    \left\| \xi \right\|_\sH + \left\| D_1 (\xi) \right\|_\sH + c' \,
    \left( \left\| \xi \right\|_\sH + \left\| D_1 (\xi) \right\|_\sH
    \right) \right) \\ \leq \sqrt 2 \, c \, (1 + c') \, \left( \left\|
      \xi \right\|^2_\sH + \left\| D_1 (\xi) \right\|^2_\sH
  \right)^{\frac 12},\ \ \xi \in \dom (D_0).
\end{multline*}
Thus, if~$A$ is~$D_0$-bounded then~$A$ is $D_1$-bounded.  The opposite
implication is similar.  In other words, we proved that
\begin{equation}
  \label{Dxequiv}
  c_1\,  \left\| A \right\|_{D_0} \leq \left\| A \right\|_{D_1} \leq c_2
  \, \left\| A \right\|_{D_0},
\end{equation}
where~$c_1$ and~$c_2$ are positive constants depending on~$\left\| D_1 -
  D_0\right\|_{D_0} < 1$.

\begin{lemma}
  \label{DboundedChar}
  Let~$D: \dom(D) \mapsto \sH$ be a self adjoint linear operator and
  let~$A : \dom (A) \mapsto \sH$ be a linear operator such that~$\dom
  (D) \subseteq \dom (A)$.  The following are equivalent:
  \begin{enumerate}
  \item $A$ is $D$-bounded; \label{DCi}
  \item $A \, (i + D)^{-1}$ and~$A\, (-i + D)^{-1}$ are bounded;
    \label{DCii}
  \item $A \, (1 + D^2)^{- \frac 12}$ is bounded. \label{DCiii}
  \end{enumerate}
\end{lemma}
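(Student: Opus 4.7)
The plan is to reduce all three conditions to an elementary statement about the graph norm of $D$. The key identity, valid because $D$ is self adjoint, is
\[
  \left\| (i+D)\xi \right\|^2 = \left\| \xi \right\|^2 + \left\| D\xi \right\|^2 = \left\| (1+D^2)^{1/2}\xi \right\|^2,\ \ \xi \in \dom(D),
\]
where the first equality follows from cancellation of the cross terms $\langle iD\xi,\xi\rangle + \langle\xi, iD\xi\rangle = 0$ (using that $\langle D\xi,\xi\rangle$ is real when $D$ is self adjoint), and the second from $\langle (1+D^2)\xi,\xi\rangle = \|\xi\|^2 + \|D\xi\|^2$. Combined with the standard spectral-theoretic fact that both $(i+D)^{-1}$ and $(1+D^2)^{-1/2}$ are bounded everywhere-defined operators on $\sH$ whose range is precisely $\dom(D)$ (since $\dom(D) = \dom((1+D^2)^{1/2})$ by the functional calculus), this says that the three expressions $(\|\xi\|^2 + \|D\xi\|^2)^{1/2}$, $\|(i+D)\xi\|$ and $\|(1+D^2)^{1/2}\xi\|$ all coincide as norms on $\dom(D)$.

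For the equivalence (\ref{DCi}) $\Leftrightarrow$ (\ref{DCiii}), I would substitute $\xi = (1+D^2)^{-1/2}\eta$ for arbitrary $\eta \in \sH$. Since $(1+D^2)^{-1/2}$ is a bijection from $\sH$ onto $\dom(D)$, the $D$-boundedness estimate $\|A\xi\| \leq c(\|\xi\|^2 + \|D\xi\|^2)^{1/2}$ for $\xi \in \dom(D)$ becomes $\|A(1+D^2)^{-1/2}\eta\| \leq c\|\eta\|$ for $\eta \in \sH$; the implication runs both ways with the same constant $c$.

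For (\ref{DCi}) $\Leftrightarrow$ (\ref{DCii}) the same substitution argument applies, using $\eta = (i+D)\xi$ together with the identity $\|(i+D)\xi\|^2 = \|\xi\|^2 + \|D\xi\|^2$ in place of its $(1+D^2)^{1/2}$ counterpart. The case of $(-i+D)^{-1}$ is identical. In fact boundedness of either one of $A(\pm i+D)^{-1}$ already suffices to give (\ref{DCi}); the two conditions in (\ref{DCii}) are each individually equivalent to the others and are stated together only for symmetry.

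No serious obstacle arises; the only point requiring care is the identification $\Ran\bigl((1+D^2)^{-1/2}\bigr) = \Ran\bigl((i+D)^{-1}\bigr) = \dom(D)$, which is a routine consequence of the spectral theorem for self adjoint operators.
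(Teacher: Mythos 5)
Your proposal is correct and follows essentially the same route as the paper: both hinge on the coincidence of $\|(\pm i+D)\xi\|$, $(\|\xi\|^2+\|D\xi\|^2)^{1/2}$ and $\|(1+D^2)^{1/2}\xi\|$ on $\dom(D)$ together with the fact that $(\pm i+D)^{-1}$ and $(1+D^2)^{-1/2}$ carry $\sH$ bijectively onto $\dom(D)$ (the paper phrases this as unitarity of $(\pm i+D)(1+D^2)^{-1/2}$, you phrase it as equality of norms, which is the same thing). The only cosmetic difference is that the paper proves (i)$\Leftrightarrow$(ii) and (ii)$\Leftrightarrow$(iii) with a few $\sqrt{2}$-constants appearing, whereas your exact norm identity links all three directly and constant-free.
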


\begin{proof}
  From~(\ref{DCi}) to~(\ref{DCii}).  If~$c > 0$ is a constant such
  that~(\ref{DboundedDefEst}) holds, then, since $$ \left( \pm i + D
  \right)^{-1} (\sH) \subseteq \dom (D), $$ we have that
  \begin{multline*}
    \left\| A \, (\pm i + D)^{-1} (\xi) \right\|_{\sH} \\ \leq c \,
    \left( \left\| (\pm i + D)^{-1} (\xi) \right\|^2_{\sH} + \left\| D
        \, (\pm i + D)^{-1} (\xi) \right\|^2_\sH \right)^{\frac 12} \\
    \leq c \, \sqrt 2 \, \left\| \xi \right\|_{\sH},\ \ \xi \in \sH.
  \end{multline*}
  This means that the operator~$A \, (\pm i + D)^{-1}$ is bounded.

  From~(\ref{DCii}) to~(\ref{DCi}).  Let~$c = \left\| A \, (\pm i +
    D)^{-1} \right\| < + \infty$.  We instantly obtain that
  \begin{multline*}
    \left\| A(\xi) \right\|_\sH = \left\| A \, (\pm i + D)^{-1} (\pm i +
      D) (\xi) \right\|_\sH \\ \leq c \, \left\| (\pm i + D) (\xi)
    \right\|_{\sH} \leq c \, \left( \left\| \xi \right\|_\sH + \left\|
        D(\xi) \right\|_\sH \right) \\ \leq c \, \sqrt 2\, \left(
      \left\| \xi \right\|^2_\sH + \left\| D(\xi) \right\|_\sH^2
    \right)^{\frac 12},\ \ \xi \in \dom (D),
  \end{multline*}
  which means that~$A$ is $D$-bounded.

  The equivalence of~(\ref{DCii}) and~(\ref{DCiii}) follows from the
  fact that the operators $$ (\pm i + D) (1 + D^2)^{- \frac 12} \ \
  \text{and}\ \  (1 + D^2)^{\frac 12} (\pm i + D)^{-1} $$ are unitary.
\end{proof}

\begin{remark}
  \label{Isomorphism}
  It follows from the proof of Lemma~\ref{DboundedChar} that $$ \frac 1
  {\sqrt 2}
  \, \left\| A \right\|_{D} \leq \left\| A \, (1 + D^2)^{-\frac 12}
  \right\| \leq \sqrt 2\, \left\| A \right\|_{D}. $$
\end{remark}

Observe that, according to Lemma~\ref{DboundedChar} and
Remark~\ref{Isomorphism}, a path~$t \mapsto D_t$ is
$\Gamma$-differentiable at the point~$t = 0$ (as defined in
Section~\ref{sec:integr-form-spectr}) if and only if $\dom (D_0)
\subseteq \dom (D_t)$ in some neighbourhood of~$t = 0$ and $$ \lim_{t
  \rightarrow 0} \left\| \frac {D_t - D_0}t - \dot D_0 \right\|_{D_0} =
0, $$ in other words, if and only if the path~$t \mapsto D_t$ is
differentiable with respect to the graph norm of the operator~$D_0$ at
the point~$t = 0$.  This observation further extends to



\subsection{The subspace~$L(D)$.}
\label{sec:subspace-ld}

Recall that an operator~$A: \dom(A) \mapsto \sH$ is called {\it
  symmetric\/} if and only if $$ \langle A(\xi), \eta \rangle = \langle
\xi, A(\eta) \rangle,\ \ \xi, \eta \in \dom (A). $$ Let~$D: \dom (D)
\to \sH$ be a self adjoint linear operator and let~$L(D)$ be the real
linear space  consisting of all symmetric $A:
\dom(A) \mapsto \sH$ where
$\dom(D) \subseteq \dom (A)$ and such that\footnote{Here~$A \, (1 + D^2)^{-\frac
    12} \in \Bd(\sH)$ means that the operator~$A \, (1 + D^2)^{-\frac
    12}$ is closable and the closure belongs to~$\Bd(\sH)$.} 
    $A \, (1 + D^2)^{-\frac 12} \in \Bd(\sH)$.

Next, observe that, since~$A$ is symmetric and~$(1 + D^2)^{- \frac 12}$ is
self adjoint, we have 
\begin{equation*}
  \langle (1 + D^2)^{- \frac 12} A (\xi), \eta \rangle = \langle A
  (\xi), (1 + D^2)^{- \frac 12} (\eta) \rangle  =  \langle \xi, A
  \, (1 + D^2)^{- \frac 12} (\eta) \rangle.
\end{equation*}
Consequently, we have the implication
\begin{equation}
  \label{LeftToRight}
  A \, (1 + D^2)^{-\frac 12} \in \Bd(\sH) \ \ \Longrightarrow \ \ (1 +
  D^2)^{-\frac 12} A \in \Bd(\sH).
\end{equation}
More precisely, if the operator~$A \, (1 + D^2)^{- \frac 12}$ is
bounded, then the operator~$(1 + D^2)^{-\frac 12} A$ is closable and its
closure is also bounded.

\begin{lemma}
  \label{LDclosedness}
  $L(D)$ is a closed subspace of\/~$\Bd(\sH)$.
\end{lemma}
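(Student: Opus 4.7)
\medskip

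\noindent\textbf{Proof plan for Lemma~\ref{LDclosedness}.}
The plan is to identify $L(D)$ with its image in $\Bd(\sH)$ under the injective real-linear map
\[
\iota : L(D) \longrightarrow \Bd(\sH), \qquad \iota(A) = \overline{A\,(1+D^2)^{-\frac 12}},
\]
and to prove that this image is norm-closed in $\Bd(\sH)$. Here injectivity holds once we insist (as is natural) that elements of $L(D)$ are taken with domain $\dom(D)$: if $\iota(A)=0$ then $A\xi = 0$ for every $\xi = (1+D^2)^{-\frac 12}\eta \in \dom(D)$.

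First, I would take an arbitrary sequence $\{A_n\} \subseteq L(D)$ with $T_n := \iota(A_n) \to T$ in operator norm, and construct a candidate limit. The natural definition is
\[
A\xi := T\,(1+D^2)^{\frac 12}\,\xi, \qquad \xi \in \dom\bigl((1+D^2)^{\frac 12}\bigr) = \dom(D),
\]
so that $A$ is a linear operator on $\dom(A)=\dom(D)$ satisfying $A(1+D^2)^{-\frac 12} = T$ on the dense range of $(1+D^2)^{-\frac 12}$; in particular the closure lies in $\Bd(\sH)$. For each $\xi \in \dom(D)$, setting $\eta = (1+D^2)^{\frac 12}\xi$ gives
\[
A_n\xi = T_n\eta \longrightarrow T\eta = A\xi,
\]
so $A$ is also the pointwise limit of $A_n$ on $\dom(D)$.

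Next, I would verify that $A$ is symmetric on its domain $\dom(D)$. For $\xi,\eta \in \dom(D)$ the symmetry of each $A_n$ yields $\langle A_n\xi,\eta\rangle = \langle \xi, A_n\eta\rangle$, and passing to the limit (which is licit because of the pointwise convergence just established) gives $\langle A\xi,\eta\rangle = \langle \xi, A\eta\rangle$. Combined with the previous step, this shows $A \in L(D)$ and $\iota(A) = T$, so the image of $\iota$ is closed and hence so is $L(D)$ (identified via $\iota$).

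The only mildly delicate point is the interpretation of $A(1+D^2)^{-\frac 12}$ as a bounded operator on all of $\sH$. This is handled by the observation, already recorded in the definition of $L(D)$ and used implicitly above, that $(1+D^2)^{-\frac 12}$ maps $\sH$ into $\dom(D) \subseteq \dom(A)$, so $A(1+D^2)^{-\frac 12}$ is defined on $\sH$ and its closure coincides with the bounded operator $T$. No further obstacle arises; closedness of $L(D)$ is essentially a soft consequence of the fact that the defining condition --- symmetry plus $\iota(A) \in \Bd(\sH)$ --- is preserved under norm limits of $\iota(A_n)$.
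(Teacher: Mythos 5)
Your proof is correct and takes essentially the same approach as the paper: given a convergent sequence $T_n = A_n(1+D^2)^{-1/2} \to T$, define the candidate $A := T(1+D^2)^{1/2}$ on $\dom(D)$, note $A_n\xi \to A\xi$ for $\xi\in\dom(D)$, and pass to the limit in $\langle A_n\xi,\eta\rangle = \langle\xi,A_n\eta\rangle$ to verify symmetry. The only substantive difference is that the paper proves the slightly stronger statement that $L(D)$ is closed in the \emph{weak} operator topology (the identical limiting argument works with WOT convergence in place of norm convergence), which immediately gives norm closedness as a corollary.
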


\begin{proof}
  We prove that the subspace~$L(D)$ is closed with respect to the weak
  operator topology.  Let~$B_n \in L(D)$, $n\ge 1$,  and $$ wo-\lim_{n \rightarrow
    \infty} B_n = B \in \Bd(\sH). $$ This means that there is a
  sequence~$A_n$ of symmetric operators such that $$ B_n = A_n (1 +
  D^2)^{- \frac 12} \ \ \text{and}\ \ \dom(D) \subseteq \dom (A_n). $$
  We need to show that~$B \in L(D)$.  Introduce the operator~$A: \dom
  (A) \mapsto \sH$ by setting $$ \dom(A) = \dom(D) \ \ \text{and}\ \ A =
  B \, (1 + D^2)^{\frac 12}. $$ We clearly have that the operator~$A \,
  (1 + D^2)^{- \frac 12}$ is closable and its closure coincides with~$B$
  i.e., $$ B = \overline{A \, (1 + D^2)^{- \frac 12}}. $$ Consequently, we have
  only to verify that the operator~$A$ is symmetric.  To this end,
  observe that
  \begin{align*}
    \langle A (\xi), \eta \rangle = &\, \langle B \, (1 + D^2)^{\frac
      12} (\xi), \eta \rangle \\ = &\, \lim_{n \rightarrow \infty}
    \langle B_n\, (1 + D^2)^{\frac 12} (\xi), \eta \rangle \\ = &\,
    \lim_{n \rightarrow \infty} \langle A_n (\xi), \eta \rangle \\ = &\,
    \lim_{n \rightarrow \infty} \langle \xi, A_n(\eta)\rangle \\ = &\,
    \lim_{n \rightarrow \infty} \langle \xi, B_n \, (1 + D^2)^{\frac 12}
    (\eta) \rangle \\ = &\, \langle \xi, B \, (1 + D^2)^{\frac
      12}(\eta)\rangle \\ = &\, \langle \xi, A(\eta) \rangle,\ \ \xi, \eta \in
    \dom (A).
  \end{align*}
  Thus, $A$ is symmetric and therefore~$B \in L(D)$.  
\end{proof}

For a closely related argument to the following see \cite{Lesch}.
\begin{lemma}
  \label{ToSymLemma}
  Let~$D_j : \dom (D_j) \mapsto \sH$, $j= 0,1$ be a self adjoint linear
  operator and let~$B \in L(D_0)$.  If\/~$D_1 - D_0$ is $D_0$-bounded
  and~$\left\| D_1 - D_0 \right\|_{D_0} < 1$, then the
  operator~$B_\theta = (1 + D_1^2)^{-\frac \theta 2} B \, (1 +
  D_0^2)^{\frac \theta 2}$ is bounded and $$ \left\| B_\theta \right\|
  \leq c_0\, \left\| B \right\|, $$ for some constant~$c_0 > 0$ and
  every~$0 \leq \theta \leq 1$.
\end{lemma}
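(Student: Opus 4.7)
The plan is to establish the bound at the two endpoints $\theta = 0, 1$ and then invoke complex interpolation (Hadamard's three-lines theorem) to cover $0 < \theta < 1$.

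At $\theta = 0$ the statement is trivial: $B_0 = B$, so $\|B_0\| = \|B\|$. The substantive endpoint is $\theta = 1$. Since $B \in L(D_0)$, by definition there is a symmetric operator $A$ with $\dom(D_0) \subseteq \dom(A)$ such that $B = \overline{A\,(1+D_0^2)^{-1/2}}$. By Lemma~\ref{DboundedChar} and Remark~\ref{Isomorphism}, the boundedness of $A\,(1+D_0^2)^{-1/2}$ is equivalent to $A$ being $D_0$-bounded, with $\|A\|_{D_0} \leq \sqrt{2}\,\|B\|$. The hypothesis $\|D_1-D_0\|_{D_0} < 1$ allows us to apply the equivalence~(\ref{Dxequiv}) to the perturbation $A$: $A$ is also $D_1$-bounded with $\|A\|_{D_1} \leq c_2 \|A\|_{D_0}$. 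Applying Lemma~\ref{DboundedChar} in the reverse direction yields that $A\,(1+D_1^2)^{-1/2}$ is bounded. Then the implication~(\ref{LeftToRight}) (valid since $A$ remains symmetric and $(1+D_1^2)^{-1/2}$ is self-adjoint) produces that $(1+D_1^2)^{-1/2} A$ is closable with bounded closure. Since on $\dom(D_0) = \dom(D_1)$ we have $A = B\,(1+D_0^2)^{1/2}$, this closure coincides with $B_1 = (1+D_1^2)^{-1/2} B\,(1+D_0^2)^{1/2}$, and tracking constants gives $\|B_1\| \leq c_0'\,\|B\|$ with $c_0'$ depending only on $\|D_1-D_0\|_{D_0}$.

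For the intermediate values I would define the operator-valued function
\begin{equation*}
F(z) = (1+D_1^2)^{-z/2}\, B\, (1+D_0^2)^{z/2},\qquad 0 \leq \Re z \leq 1,
\end{equation*}
as a sesquilinear form on a dense subspace of analytic vectors, concretely on pairs $(\xi,\eta)$ with $\xi \in \Ran\bigl(\chi_{[-n,n]}(D_0)\bigr)$ and $\eta \in \Ran\bigl(\chi_{[-n,n]}(D_1)\bigr)$ for some $n$. On such vectors the functions $z \mapsto (1+D_0^2)^{z/2}\xi$ and $z \mapsto (1+D_1^2)^{\bar z/2}\eta$ are entire Hilbert-space-valued functions (they are polynomials in $z$ composed with a bounded spectral integral), so $\langle F(z)\xi,\eta\rangle$ is holomorphic in the strip and continuous up to the boundary. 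On the vertical line $\Re z = 0$ the flanking factors $(1+D_j^2)^{\pm it/2}$ are unitary, so $\|F(it)\| \leq \|B\|$; on $\Re z = 1$ we write $F(1+it) = (1+D_1^2)^{-it/2} B_1 (1+D_0^2)^{it/2}$, giving $\|F(1+it)\| \leq c_0'\,\|B\|$. Hadamard's three-lines lemma applied to $\langle F(z)\xi,\eta\rangle$ (divided by $e^{\varepsilon z^2}$ to obtain the uniform decay as $|\Im z|\to\infty$ needed for the theorem, and then letting $\varepsilon \to 0$) yields
\begin{equation*}
|\langle F(\theta)\xi,\eta\rangle| \leq \|B\|^{1-\theta}\,(c_0'\|B\|)^\theta\,\|\xi\|\,\|\eta\| \leq \max(1,c_0')\,\|B\|\,\|\xi\|\,\|\eta\|.
\end{equation*}
Since the spectral subspaces are dense and $F(\theta)$ agrees on them with $B_\theta$, the desired bound $\|B_\theta\| \leq c_0\|B\|$ with $c_0 = \max(1,c_0')$ follows.

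The main obstacle will be the technical verification that $F$ is well-defined as a genuinely holomorphic family on a sufficiently rich dense subspace and that the unboundedness of $(1+D_0^2)^{z/2}$ for $\Re z > 0$ does not cause domain issues. Restricting to the spectral subspaces $\Ran\bigl(\chi_{[-n,n]}(D_j)\bigr)$ sidesteps this, since on those subspaces $(1+D_j^2)^{z/2}$ is bounded and analytic in $z$, and these subspaces are cores for all relevant operators by spectral calculus. Everything else is bookkeeping of the constants coming from (\ref{Dxequiv}) and the equivalence in Remark~\ref{Isomorphism}.
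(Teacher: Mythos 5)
Your proof follows essentially the same route as the paper's: it establishes the $\theta=1$ endpoint by passing to the underlying symmetric operator $A$ and using the equivalence of $D_0$- and $D_1$-boundedness (via Lemma~\ref{DboundedChar}, (\ref{Dxequiv}) and (\ref{LeftToRight})), and then interpolates to intermediate $\theta$ via the three-lines theorem, controlling domains by spectral truncation. The only organizational difference is that the paper packages the interpolation step into a separate statement, Lemma~\ref{MaxPrincipleCor} (itself proved by the three-lines lemma), which it applies to the bounded truncations $E^1_n B E^0_n$ together with the positive bounded factors $(1+D_j^2)^{-1/2}$, whereas you apply Hadamard's theorem directly to the operator-analytic family $F(z)$ restricted to spectral subspaces.
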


\begin{proof}
  Let~$A : \dom (A) \mapsto \sH$ be a symmetric linear operator such
  that
  \begin{equation}
    \label{ToSymLemmaRight}
    B = A \, (1 + D_0^2)^{- \frac 12} \in
    \Bd(\sH). 
  \end{equation}
  In particular, $$ \dom (D_0) \subseteq \dom (A). $$ The operator~$A$
  is $D_0$-bounded (see Lemma~\ref{DboundedChar}).  According
  to~(\ref{Dxequiv}), the operator~$A$ is also $D_1$-bounded.  This
  further means (using~(\ref{LeftToRight}) and Lemma~\ref{DboundedChar})
  that
  \begin{equation}
    \label{ToSymLemmaLeft}
    (1 + D_1^2)^{- \frac 12} A \in
    \Bd(\sH). 
  \end{equation}
  Let~$E^{j}_{n} = E^{D_j}(-n, n)$ be the spectral projection of the
  operator~$D_j$, $j=0,1$.  The operator~$E^{1}_{ n} A E^{0}_{ n}$ is
  bounded since $$ E^{1}_{n} A E^{0}_{ n} = E^1_n\, B \, (1 +
  D_0^2)^{\frac 12} \, E^{0}_{ n}. $$ Let~$$ C_{\theta, n} = (1 +
  D_1^2)^{- \frac \theta 2} E^{1}_{ n} B E^{0}_{ n} (1 + D_0^2)^{\frac
    \theta 2}. $$ We clearly have that $$ \lim_{n \rightarrow \infty }
  \langle C_{\theta, n} (\xi) , \eta \rangle = \langle B_\theta(\xi),
  \eta\rangle,\ \ \xi \in \dom(D_0),\ \eta \in \dom(D_1). $$ Thus, it is
  sufficient to show that the operators~$C_{n, \theta}$ are uniformly
  bounded with respect to~$n$.  This follows
  from~(\ref{ToSymLemmaRight}), (\ref{ToSymLemmaLeft}) and
  Lemma~\ref{MaxPrincipleCor} below.
\end{proof}

\begin{lemma}
  \label{MaxPrincipleCor}
  Let~$A$, $B_j$, $j= 0,1$ be bounded linear operators.  If~$B_j$, $j =
  0,1$ are positive, then the operator~$B_1^{1-\theta} A \,
  B_0^{\theta}$ is bounded and
  \begin{equation}
    \label{MaxPrincipleCorEst}
    \left\| B_1^{1 - \theta}
      A\, B_0^{\theta} \right\| \leq \left\| B_1 A \right\|^{1 - \theta}
    \, \left\| A B_0 \right\|^{\theta},\ \ 0 \leq \theta \leq 1. 
  \end{equation}
\end{lemma}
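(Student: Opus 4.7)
The plan is to prove this by Hadamard's three-lines theorem applied to a suitable operator-valued analytic function on the strip $S = \{z \in \Cx : 0 \le \mathrm{Re}(z) \le 1\}$. Specifically, I would first deal with the case where both $B_0$ and $B_1$ are strictly positive (hence invertible) so that the complex powers $B_j^z$ are well-defined for all $z \in \Cx$ via the bounded Borel functional calculus, producing entire operator-valued functions. The general positive case follows by replacing $B_j$ with $B_j + \epsilon$ and letting $\epsilon \downarrow 0$, using that $\|(B_j + \epsilon) A\| \to \|B_j A\|$ and $\|A(B_0+\epsilon)\| \to \|AB_0\|$ and that $(B_j+\epsilon)^\theta \to B_j^\theta$ in the strong operator topology for $0 \le \theta \le 1$.

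The key object is $F(z) = B_1^{1 - z}\, A\, B_0^{z}$ on the strip $S$. Assuming $B_j$ invertible, $F$ is entire and continuous, and is uniformly bounded on $S$ because $|\lambda^z| = \lambda^{\mathrm{Re}(z)}$ is bounded on $[\epsilon, \|B_j\|]$ for $0 \le \mathrm{Re}(z) \le 1$. The crucial calculation is on the two boundary lines: using $B_1^{1 - it} = B_1^{-it}\, B_1$ and $B_0^{it}$, together with the elementary bound $\|B_j^{is}\| \le 1$ (since $|\lambda^{is}| = 1$ for $\lambda > 0$), one obtains
\begin{equation*}
  \|F(it)\| = \|B_1^{-it}\, (B_1 A)\, B_0^{it}\| \le \|B_1 A\|,
\end{equation*}
and symmetrically, using $B_0^{1 + it} = B_0\, B_0^{it}$ and $B_1^{-it}$,
\begin{equation*}
  \|F(1 + it)\| = \|B_1^{-it}\, (A B_0)\, B_0^{it}\| \le \|A B_0\|.
\end{equation*}

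Having both boundary estimates, I would pass to a scalar version by fixing unit vectors $\xi, \eta \in \sH$ and setting $g_{\xi,\eta}(z) = \langle F(z) \xi, \eta \rangle$, which is a bounded scalar holomorphic function on $S$ continuous on $\bar S$. Applying the classical three-lines theorem to $g_{\xi,\eta}$ yields
\begin{equation*}
  |g_{\xi,\eta}(\theta)| \le \|B_1 A\|^{1 - \theta}\, \|A B_0\|^{\theta},
  \qquad 0 \le \theta \le 1,
\end{equation*}
and taking the supremum over unit vectors $\xi, \eta$ gives the desired operator norm inequality~(\ref{MaxPrincipleCorEst}). Finally, letting $\epsilon \downarrow 0$ removes the invertibility assumption.

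The main obstacle, if any, is the technical nuisance of justifying everything in the non-invertible case, since $B_j^z$ for $\mathrm{Re}(z) = 0$ is not a meaningful bounded operator when $0 \in \sigma(B_j)$. The regularization $B_j \rightsquigarrow B_j + \epsilon$ sidesteps this cleanly, but one must verify that the boundary bounds survive the limit $\epsilon \downarrow 0$ and that $(B_1 + \epsilon)^{1-\theta} A (B_0 + \epsilon)^\theta$ converges in norm (or at least strongly, which suffices after pairing with vectors) to $B_1^{1-\theta} A B_0^{\theta}$; this follows from continuity of the functional calculus on the relevant bounded Borel functions on $[0, \max(\|B_0\|, \|B_1\|) + 1]$.
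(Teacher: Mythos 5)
Your proposal is correct and uses essentially the same argument as the paper: apply the three-lines lemma to the scalar-valued holomorphic function $z\mapsto\langle B_1^{1-z}A\,B_0^{z}\xi,\eta\rangle$ on the strip $0\le\Re z\le1$, with boundary bounds $\|B_1A\|$ and $\|AB_0\|$ obtained from the unitarity-type estimate $\|B_j^{is}\|\le1$. The paper simply folds the normalization factors $\|B_1A\|^{-1+z}\|AB_0\|^{-z}$ into the definition of $f_{\xi,\eta}$ and omits the $\epsilon$-regularization; the latter is a reasonable precaution but is not strictly needed, since the boundary identities $B_1^{1-it}=B_1^{-it}B_1$ and $B_0^{1+it}=B_0B_0^{it}$ and the bound $\|B_j^{is}\|\le1$ already hold for merely positive (not invertible) $B_j$ via the bounded Borel functional calculus, and $z\mapsto\langle B_1^{1-z}A\,B_0^{z}\xi,\eta\rangle$ is continuous on the closed strip and holomorphic in the interior by dominated convergence.
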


\begin{proof}
  The lemma is a straightforward application of the three lines lemma
  (see~\cite[Lemma~1.1.2]{BergLofstrom}) to the
  holomorphic function $$ f_{\xi, \eta}(z) = \left\| B_1 A
  \right\|^{-1 + z} \left\| A \, B_0 \right\|^{-z} \, \langle B_1^{1 -
    z} A\, B_0^{z} (\xi), \eta \rangle,\ \ \xi, \eta \in \sH, $$
  considered in the strip~$S = \left\{z \in \Cx:\ 0 < \Re z < 1
  \right\}$.\end{proof}

\subsection{The proof of Theorems~\ref{DiffTheorem}
  and~\ref{ContDiffTheorem}.}
\label{sec:proof-main-results}

The proof of Theorems~\ref{DiffTheorem} and~\ref{ContDiffTheorem} rests
on the properties of the operator~$T_\phi (D_1, D_0)$ where the
function~$\phi$ is given by
\begin{equation}
  \label{PhiDef}
  \phi(\lambda, \mu) = \frac {\vartheta(\lambda) -
    \vartheta(\mu)}{\lambda - \mu} \, (1 + \mu^2)^{\frac 12}. 
\end{equation}
The difficulty about this operator is the fact that it is not bounded
on~$\Bd(\sH)$.  Thus, a direct application of the methods of double operator
integrals and harmonic analysis is not feasible.  Nevertheless, we shall
show that this operator, when considered on the subspace~$L(D_0)$, is
bounded and possesses all the properties needed to prove
Theorems~\ref{DiffTheorem} and~\ref{ContDiffTheorem}.

Let~$D_j : \dom (D_j) \mapsto \sH$, $j = 0,1$ be a self adjoint linear
operator such that~$D_1 - D_0$ is $D_0$-bounded and~$\left\| D_1 - D_0
\right\|_{D_0} \leq \frac 12$.  In order to introduce the
operator~$T_\phi = T_\phi(D_1, D_0) : L(D_0) \mapsto \Bd(\sH)$ ($\phi$
is given in~(\ref{PhiDef})), let us consider another function
\begin{equation}
  \label{PsiDef}
  \psi(\lambda, \mu) = (1 + \lambda^2)^{\frac 14} \, \frac
  {\vartheta(\lambda) - \vartheta(\mu)}{\lambda - \mu} \, (1 +
  \mu^2)^{\frac 14}.  
\end{equation}
The operator~$T_\psi = T_{\psi} (D_1, D_0)$ is bounded on~$\Bd(\sH)$,
($T_\psi$ is equal to the operator~$T_{\theta}$ with~$\theta = \frac 12$
introduced in the proof of~\cite[Theorem~14]{PoSuNGapps}).  Observe also
that the bound of operator~$T_\psi$ does not depend of the
operators~$D_j$, $j = 0,1$.

The fact that the operator~$T_\psi$ is bounded on~$\Bd(\sH)$ and
Lemma~\ref{ToSymLemma} imply that the mapping
\begin{equation}
  \label{TphiDef}
  B \in L(D_0) \mapsto
  T_\psi ((1 + D_1^2)^{- \frac 14} B\, (1 + D_0^2)^{\frac 14}) \in \Bd(\sH)
\end{equation}
is a bounded linear operator~$L(D_0) \mapsto \Bd(\sH)$ whose norm
depends only on the quantity~$\left\| D_1 - D_0 \right\|_{D_0}$.
Furthermore, it is known (see~\cite{PoSu}, see also~\cite{PotapovThesis}
for a more complete and detailed exposition) that the operators~$T_\phi$ and~$T_\psi$
are bounded from~$\cC^2 \mapsto \cC^2$, where~$\cC^2 \subseteq \Bd(\sH)$ is
the Hilbert-Schmidt ideal and the following identity holds

$$ T_\phi(B)\, (1+D_0^2)^{- \frac 14} = (1 + D_1^2)^{- \frac 14 }T_\psi
(B),\ \ B \in \cC^2. $$ The latter identity suggests that the
mapping~(\ref{TphiDef}) is a (unique) bounded extension of the
operator~$T_\phi$ from~$\cC^2 \cap L(D_0)$ to the space~$L(D_0)$.
Motivated by this observation, we shall write~$T_\phi = T_\phi(D_1,
D_0)$ for the mapping~(\ref{TphiDef}).

\begin{proof}[Proof of Theorem~\ref{DiffTheorem}]
  Let~$T_{\phi, t} = T_\phi (D_t, D_0)$ and let~$H = T_{\phi, 0} (G)$
  where~$G = \dot D_0 \, (1 + D_0^2)^{-\frac 12}$ (observe that the
  subspace~$L(D_0)$ is closed in~$\Bd(\sH)$ and therefore~$G \in
  L(D_0)$).  It now follows from Theorem~\ref{PerturbationInM} that
  \begin{equation*}
    \frac {F_t - F_0} t - H = T_{\phi, t} \left( \frac {D_t - D_0} t \,
      (1 + D_0^2)^{-\frac 12} - G \right) + \left( T_{\phi, t} (G) -
      T_{\phi, 0}  (G) \right). 
  \end{equation*}
  When~$t \rightarrow 0$, the first term vanishes due to assumptions of
  the theorem and the fact that the operators~$T_{\phi, t}$ are
  uniformly bounded.  To finish the proof of the theorem, we need to
  justify that
  \begin{equation}
    \label{DiffTheoremAim}
    \lim_{t \rightarrow 0} \left\|
      T_{\phi, t} (G) - T_{\phi, 0} (G) \right\| = 0. 
  \end{equation}
  Letting~$T_{\psi, t} = T_{\psi} (D_t, D_0)$ and $$ C_t = (1 +
  D_t^2)^{- \frac 14} G \, (1 + D_0^2)^{\frac 14}, $$ we infer
  (via~(\ref{TphiDef})) that
  \begin{equation*}
    T_{\phi, t} (G) - T_{\phi, 0} (G) =  T_{\psi, t} (C_t) - T_{\psi,
      0} (C_0)  =  T_{\psi, t} (C_t - C_0) + (T_{\psi, t} (C_0) -
    T_{\psi, 0} (C_0)).
  \end{equation*}
  Observing that~$T_{\psi, t}$ are uniformly bounded, we see that it is
  sufficient to show
  \begin{equation}
    \label{DiffTheoremAimI}
    \lim_{t \rightarrow 0} \left\| C_t - C_0 \right\| = 0
    \ \ \text{and}\ \ \lim_{t \rightarrow 0} \left\| T_{\psi, t} (C_0) -
      T_{\psi, 0} (C_0) \right\| = 0. 
  \end{equation}
  The first limit in~(\ref{DiffTheoremAimI}) is due to the following
  identity
  \begin{align*}
    C_t - C_0 = &\, (1 + D_t^2)^{- \frac 14} G \, (1 + D_0^2)^{\frac 14}
    - (1+D_0^2)^{- \frac 14} G\, (1 + D_0^2)^{\frac 14} \cr = &\, \left(
      (1+ D_t^2 )^{- \frac 14} (1+ D_0^2)^{\frac 14} - 1 \right) \,
    C_0
  \end{align*}
  and the following estimate~(see
  Lemma~\ref{AuxilliaryEsts}.(\ref{AuxilliaryEstI}) below)
  \begin{equation}
    \label{DiffTheoremTempI}
    \left\| (1+ D_t^2 )^{- \frac 14} (1+ D_0^2)^{\frac 14} - 1 \right\|
    \leq c_0 \, \left\| \, (D_t - D_0)\, (1 + D_0^2)^{- \frac 12}
    \right\|. 
  \end{equation}
  For the second limit in~(\ref{DiffTheoremAimI}), observe that the
  operator~$T_{\psi, t}$ is precisely the operator~$T_{\psi, t}$
  introduced in the proof of~\cite[Theorem~21]{PoSuNGapps}.  Thus, the
  required limit follows from~\cite[Formula~(6.8)]{PoSuNGapps} and the
  estimate (see Lemma~\ref{AuxilliaryEsts}.(\ref{AuxilliaryEstII})
  below)
  \begin{equation}
    \label{DiffTheoremPoSuImprovement}
    \left\| (1 +
      D_t^2)^{\frac {is} 2} - (1 + D_0^2)^{\frac {is} 2} \right\| \leq c_0
    \, \left\| \, (D_t - D_0) \, (1 + D_0)^{- \frac 12} \right\|,\ \
    \left| s \right| \leq s_0, 
  \end{equation}
  for some constant~$c_0 > 0$ which may depend on~$s_0$.  The latter
  estimate is an improvement of~\cite[Formula~(6.16)]{PoSuNGapps}.  The
  proof of the theorem is finished.
\end{proof}

\begin{proof}[Proof of Theorem~\ref{ContDiffTheorem}]
  The proof is similar to the proof of Theorem~\ref{DiffTheorem}.
  Indeed, letting~$T_{\phi, t, s} = T_\phi(D_t, D_s)$ ($\phi$ is given
  in~(\ref{PhiDef})), we obtain
  \begin{align*}
    H_t - H_0 = T_{\phi, t, t} (G_t) - T_{\phi, 0, 0} (G_0) = &\, 
    T_{\phi, t, t} (G_t) - T_{\phi, t, t} (G_0) \cr + &\, T_{\phi,
      t, t} (G_0) - T_{\phi, t, 0} (G_0) \cr + &\, T_{\phi, t, 0} (G_0) -
    T_{\phi, 0, 0} (G_0). 
  \end{align*}
  The first term vanishes since the operators~$T_{\phi, t, t}$ are
  uniformly bounded and the assumption of the theorem; the last one does
  due to~(\ref{DiffTheoremAim}).  To finish the proof, we need to show
  that
  $$ \lim_{t \rightarrow 0} \left\| T_{\phi, t, t} (G_0) - T_{\phi, t,
      0} (G_0) \right\| = 0. $$ Let~$T_{\psi, t, s} = T_\psi (D_t,
  D_s)$, where the function~$\psi$ is given in~(\ref{PsiDef}) and let $$
  C_{t, s} = (1 + D_t^2)^{-\frac 14} G_0 \, (1 + D_s^2)^{\frac 14} \in
  \Bd(\sH). $$ It then follows from the definition of the
  operator~$T_{\phi, t, s}$ that
  \begin{multline}
    \label{ContDiffTheoremTempI}
    T_{\phi, t, t} (G_0) - T_{\phi, t, 0} (G_0) = T_{\psi, t, t} (C_{t, t}) -
    T_{\psi, t, 0} (C_{t, 0}) \\ = T_{\psi_, t, 0} (C_{t, 0} - C_{0, 0})
    + T_{\psi, t, t}(C_{t, t} - C_{0, 0}) \\ + T_{\psi, t, t} (C_{0,
      0}) - T_{\psi, t, 0} (C_{0, 0}).
  \end{multline}
  The first term vanishes due to the fact that the operators~$T_{\psi,
    t, s}$ are uniformly bounded and~(\ref{DiffTheoremAimI}).  For the
  last term in~(\ref{ContDiffTheoremTempI}), observe that the
  operator~$T_{\psi, t, s}$ is the operator~$\bar T_{t,s}$ introduced in
  the proof of~\cite[Theorem~21]{PoSuNGapps}.  Therefore, the last term
  in~(\ref{ContDiffTheoremTempI}) vanishes due
  to~\cite[Formula~(6.11)]{PoSuNGapps} and the
  estimate~(\ref{DiffTheoremPoSuImprovement}).  Thus, to finish the
  proof of the theorem, we need only justify that 
  \begin{equation}
    \label{ContDiffTheoremTempII}
    \lim_{t
      \rightarrow 0} \left\| C_{t, t} - C_{0, 0} \right\| = 0. 
  \end{equation}
  For the latter, observe that
  \begin{align*}
    C_{t, t} - C_{0, 0} = &\, (1+ D_t^2)^{- \frac 14} G_0 \, (1 +
    D_t^2)^{\frac 14} - (1 + D_0^2)^{- \frac 14} G_0 \, ( 1 +
    D_0^2)^{\frac 14} \cr = &\, C_{t, t} \, \left( 1 - (1 + D_t^2)^{-
        \frac 14} ( 1 + D_0^2)^{\frac 14} \right) \cr + &\, \left( (1 +
      D_t^2)^{- \frac 14} (1 + D_0^2)^{\frac 14} - 1 \right) \, C_{0,
      0}.
  \end{align*}
  It is now clear that~(\ref{ContDiffTheoremTempII}) follows
  from~(\ref{DiffTheoremTempI}) and the fact that the operators~$C_{t,
    t}$ are uniformly bounded.  The proof of the theorem is finished.
\end{proof}

\begin{lemma}
  \label{AuxilliaryEsts}
  Let~$D_j :\dom (D_j) \mapsto \sH$, $j = 0,1$ be a linear self adjoint
  operator.  If~$D_1 - D_0$ is $D_0$-bounded and~$\left\| D_1 - D_0
  \right\|_{D_0} \leq \frac 12$, then
  \begin{enumerate}
  \item there is a constant~$c_0 > 0$ such that $$ \left\| (1 +
      D_1^2)^{-\frac 14} (1 + D_0^2)^{\frac 14} - 1 \right\| \leq c_0 \,
    \left\| \, (D_1 - D_0) \, (1 + D_0^2)^{-\frac 12}\right\|;
  $$   \label{AuxilliaryEstI}
\item for every~$s_0 > 0$, there is a constant~$c_1 > 0$ such that $$
  \left\| (1 + D_1^2)^{\frac {is}2} - (1 + D_0^2)^{\frac {is}2} \right\|
  \leq c_1 \, \left\| \, (D_1 - D_0) \, (1 + D_0^2)^{- \frac 12}
  \right\|, \ \ \left| s \right| \leq s_0. $$ \label{AuxilliaryEstII}
\end{enumerate}
\end{lemma}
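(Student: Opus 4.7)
The plan is to express each of the operator differences in~(\ref{AuxilliaryEstI}) and~(\ref{AuxilliaryEstII}) as the image of the bounded operator $B := (D_1-D_0)(1+D_0^2)^{-1/2}$ under a suitable double operator integral produced by Theorem~\ref{PerturbationInM}, and then to bound the $\Bd(\sH)$-operator-norm of that DOI. Once the DOI bound is established (with an absolute constant for~(\ref{AuxilliaryEstI}), and with a constant $c_1(s_0)$ uniform in $|s|\leq s_0$ for~(\ref{AuxilliaryEstII})), the required inequalities follow immediately.

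For~(\ref{AuxilliaryEstI}), apply Theorem~\ref{PerturbationInM} to $f(x) = (1+x^2)^{1/4}$ with $A = \mathbf{1}$, $\alpha_1 = \beta_0 = 1$, $\alpha_0(x) = (1+x^2)^{-1/2}$, $\beta_1(x) = (1+x^2)^{-1/4}$. This yields
\begin{equation*}
1 - (1+D_1^2)^{-1/4}(1+D_0^2)^{1/4} = T_{\phi_1}(B),\qquad \phi_1(\lambda,\mu) = \frac{(1+\mu^2)^{1/2}}{(1+\lambda^2)^{1/4}}\cdot\frac{(1+\lambda^2)^{1/4} - (1+\mu^2)^{1/4}}{\lambda - \mu}.
\end{equation*}
For~(\ref{AuxilliaryEstII}), apply the same theorem with $f_s(x) = (1+x^2)^{is/2}$, $A = \mathbf{1}$, $\alpha_1 = \beta_1 = \beta_0 = 1$, $\alpha_0(x) = (1+x^2)^{-1/2}$ to obtain
\begin{equation*}
(1+D_1^2)^{is/2} - (1+D_0^2)^{is/2} = T_{\phi_2^{(s)}}(B),\qquad \phi_2^{(s)}(\lambda,\mu) = (1+\mu^2)^{1/2}\,\psi_{f_s}(\lambda,\mu).
\end{equation*}

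The principal obstacle is the boundedness of $T_{\phi_1}$ and $T_{\phi_2^{(s)}}$ on $\Bd(\sH)$, since neither symbol is pointwise bounded on $\Rl^2$ (so direct membership in $\mathfrak{A}_0$ is not available). The strategy is to factor each symbol as a product of a one-sided weight $(1+\lambda^2)^{-1/4}(1+\mu^2)^{1/4}$, which is reabsorbed via Lemma~\ref{ToSymLemma}, times a Hilbert-symmetric divided-difference symbol of the form $(1+\lambda^2)^{1/4}\psi_g(\lambda,\mu)(1+\mu^2)^{1/4}$, known to be bounded on $\Bd(\sH)$ by~\cite[Theorem~14]{PoSuNGapps}. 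For~(\ref{AuxilliaryEstII}), uniformity in $|s|\leq s_0$ is obtained from the identities
\begin{equation*}
\frac{a^{is} - b^{is}}{a - b} = is\int_0^1(ta + (1-t)b)^{is - 1}\,dt,\qquad a - b = \frac{(\lambda+\mu)(\lambda - \mu)}{a + b},
\end{equation*}
with $a = (1+\lambda^2)^{1/2}$, $b = (1+\mu^2)^{1/2}$, combined with the Laplace representation $u^{is-1} = \Gamma(1-is)^{-1}\int_0^\infty r^{-is}e^{-ru}\,dr$, which decouples $(ta+(1-t)b)^{is-1}$ into the tensor product $e^{-rta}\cdot e^{-r(1-t)b}$; the resulting $\mathfrak{A}_0$-norm of the decoupled piece is controlled by $|s|\,|\Gamma(1-is)|^{-1}\int_0^\infty e^{-r}\,dr$, a quantity continuous and hence uniformly bounded on $[-s_0,s_0]$. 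This linear-in-$\|B\|$ (and linear-in-$|s|$) bound is exactly the refinement over~\cite[Formula~(6.16)]{PoSuNGapps} claimed immediately after the statement of the lemma.
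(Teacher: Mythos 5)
Your overall plan -- write the operator difference as a DOI applied to $B=(D_1-D_0)(1+D_0^2)^{-1/2}$, then split the (unbounded) symbol into a weight absorbed through Lemma~\ref{ToSymLemma} times a balanced bounded symbol -- is the same idea the paper uses. But the execution has genuine gaps.

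First, you invoke Theorem~\ref{PerturbationInM} with $\alpha_j,\beta_j$ producing the symbols $\phi_1$ and $\phi_2^{(s)}$, and you yourself note that neither is pointwise bounded. But the hypothesis of Theorem~\ref{PerturbationInM} is $\phi\in\Phi(\aM)$, so you cannot apply it to get the displayed identities for $T_{\phi_1}$ and $T_{\phi_2^{(s)}}$ in the first place. The paper avoids this by choosing $\alpha_j,\beta_j$ from the outset so that the resulting symbol ($\eta$ for (i), $\zeta$ for (ii)) is in $\Phi(\Bd(\sH))$, and only afterwards controlling the perturbed argument via Lemma~\ref{ToSymLemma}.

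Second, the claimed factorization for (\ref{AuxilliaryEstI}) is not correct: with $\gamma_j=(1+\lambda_j^2)^{1/2}$, one has $\phi_1=-\gamma_1^{-3/4}\gamma_0^{3/4}\cdot\gamma_1^{1/4}\gamma_0^{1/4}\psi_k$ with $k(x)=(1+x^2)^{1/4}$, so the weight is $(1+\lambda^2)^{-3/8}(1+\mu^2)^{3/8}$ (which is still admissible for Lemma~\ref{ToSymLemma}, with $\theta=3/4$), and the balanced piece carries exponents $1/8$, not $1/4$. More importantly, once the exponents are fixed, the balanced piece $(1+\lambda^2)^{1/8}\psi_k(1+\mu^2)^{1/8}$ (resp.\ $\zeta=(1+\lambda^2)^{1/4}\psi_{g_s}(1+\mu^2)^{1/4}$ with $g_s(x)=(1+x^2)^{is/2}$) does \emph{not} fall under~\cite[Theorem~14]{PoSuNGapps}: that result is about the specific symbol $(1+\lambda^2)^{1/4}\psi_\vartheta(1+\mu^2)^{1/4}$ built from $\vartheta(x)=x(1+x^2)^{-1/2}$, not about the powers $(1+x^2)^{1/4}$ or $(1+x^2)^{is/2}$. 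Membership in $\Phi(\Bd(\sH))$ for your balanced symbols still has to be proved.

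Third, your Laplace-transform sketch for (\ref{AuxilliaryEstII}) only shows that $(a^{is}-b^{is})/(a-b)$ (with $a=\gamma_1,b=\gamma_0$) lies in $\mathfrak{A}_0$ with norm $\lesssim |s|/|\Gamma(1-is)|$. But $\zeta$ equals this divided difference \emph{times} $\gamma_1^{1/2}\gamma_0^{1/2}\,\dfrac{\gamma_1-\gamma_0}{\lambda-\mu}$, and that second factor is unbounded (on the diagonal it equals $\lambda$), hence not separately in $\mathfrak{A}_0$; so the product argument does not close. The boundedness of $\zeta$ relies on a cancellation between the growth of that factor and the decay $|a^{is}-b^{is}|/|a-b|\lesssim |s|/\max(a,b)$, and one must encode this cancellation inside a single $\mathfrak{A}_0$-representation. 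The paper does exactly this by writing
$\zeta=\gamma_1^{is/2}\gamma_0^{is/2}\bigl[\tfrac{\lambda_1}{\gamma_1}f(\gamma_0/\gamma_1)+\tfrac{\lambda_0}{\gamma_0}f(\gamma_1/\gamma_0)\bigr]$ with
$f(t)=(t^{is/2}-t^{-is/2})\big/\bigl((1+t)(t^{1/4}-t^{-1/4})\bigr)$, and similarly for $\eta$, and then appeals to~\cite[Lemmas~7 and~9]{PoSuNGapps} to conclude $\eta,\zeta\in\Phi(\Bd(\sH))$; this is the step your proposal leaves unproved.

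Finally, the uniformity in $|s|\le s_0$ in the paper's proof comes from the explicit form of $f$ above (the $\mathfrak{A}_0$-norm depends continuously on $s$), not from the $|s|/|\Gamma(1-is)|$ bound you extract, which concerns only the unweighted divided difference.
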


\begin{proof}
  (\ref{AuxilliaryEstI})~Let us set~$G_j = (1 + D_j^2)^{\frac 12}$, $j =
  0, 1$ for brevity.  By Lemma~\ref{ToSymLemma}, we have $$
  \left\| G_1^{-\frac 34} (D_1 - D_0) \, G_0^{-\frac 14} \right\| \leq
  c_0 \, \left\| \, (D_1 - D_0) \, G_0^{-1} \right\|. $$ Thus, it is
  sufficient to show that $$ \left\| G_1^{-\frac 12} G_0^{\frac 12} - 1
  \right\| \leq c_0\, \left\| G_1^{- \frac 34} (D_1 - D_0)\, G_0^{-\frac
      14} \right\|. $$ Consider the function~
  \begin{equation}
    \label{AuxilliaryPhi}
    \eta (\lambda_1, \lambda_0) = \gamma_1^{\frac 34}
    \frac {\gamma_1^{- \frac 12} \gamma_0^{\frac 12} - 1}{\lambda_1 -
      \lambda_0} \, \gamma_0^{\frac 14},\ \ \gamma_j = (1 +
    \lambda^2_j)^{\frac 12},\ j = 0,1. 
  \end{equation}
  Suppose that~$\eta \in \Phi(\Bd(\sH))$.  The required estimate then
  follows from Theorem~\ref{PerturbationInM} which guarantees the
  identity $$ G_1^{-\frac 12} G_0^{\frac 12} - 1 = T_{\eta} (G_1^{-\frac
    34} (D_1 - D_0) \, G_0^{-\frac 14}), $$ where~$T_\eta = T_\eta(D_1,
  D_0)$.  Thus, to finish the proof of~(\ref{AuxilliaryEstI}) we need
  show that~$\eta \in \Phi(\Bd(\sH))$.  This is justified
  by~\cite[Lemmas~7 and~9]{PoSuNGapps} and the following representation
  of the function~$\eta$ given in~(\ref{AuxilliaryPhi}) $$
  \eta(\lambda_1, \lambda_0) = \frac {\lambda_1}{\gamma_1} \, f\left(
    \frac {\gamma_0} {\gamma_1} \right) + \frac {\lambda_0}{\gamma_0} \,
  f \left( \frac {\gamma_1} {\gamma_0} \right),$$ where the function~$f$
  is given by $$ f(t) = { \left( 1 + t \right)^{-1} \, \left( t^{\frac
        14} + t^{- \frac 14} \right)^{-1}},\ \ t > 0.  $$

  (\ref{AuxilliaryEstII})~We keep the notations of the proof above.
  Let~$s_0$ be fixed.  Referring to Lemma~\ref{ToSymLemma} again, we
  need only show that $$ \left\| G_1^{is} - G_0^{is} \right\| \leq c_0
  \, \left\| G_1^{-\frac 12} (D_1 - D_0) \, G_0^{-\frac 12} \right\|. $$
  Let~$$ \zeta(\lambda_1, \lambda_0) = \gamma_1^{\frac 12} \, \frac
  {\gamma_1^{is} - \gamma_0^{is}}{\lambda_1 - \lambda_0} \,
  \gamma_0^{\frac 12}. $$ If~$\zeta \in \Phi(\Bd(\sH))$, then we have
  the identity (see Theorem~\ref{PerturbationInM}) $$ G_1^{is} -
  G_0^{is} = T_\zeta \left( G_1^{-\frac 12} (D_1 - D_0) \, G_0^{- \frac
      12} \right), $$ where~$T_\zeta = T_\zeta(D_1, D_0)$
  and~(\ref{AuxilliaryEstII}) follows.  Thus, we need to establish
  that~$\zeta \in \Phi(\Bd(\sH))$.  To this end, note that the latter
  function admits the representation $$ \zeta(\lambda_1, \lambda_0) =
  \gamma_1^{\frac {is}2} \gamma_0^{\frac {is}2} \, \left[ \frac
    {\lambda_1}{\gamma_1} \, f\left( \frac {\gamma_0} {\gamma_1} \right)
    + \frac {\lambda_0}{\gamma_0} \, f\left( \frac {\gamma_1} {\gamma_0}
    \right) \right], $$ where the function~$f$ is given by
  $$ f(t) = \frac{t^{\frac {is}2 } - t^{- \frac {is} 2}} {\left( 1 + t
    \right) \, \left( t^{\frac 14} - t^{-\frac 14} \right)}. $$ This,
  together with~\cite[Lemmas~7 and~9]{PoSuNGapps}, implies that~$\zeta
  \in \Phi(\Bd(\sH))$ (with the norm depending on~$s$).  The proof of
  the lemma is finished.
\end{proof}



\section{Appendix}
\label{sec:appendix}

\subsection*{
Homotopical equivalence between~$\Fred_*$ and~$\Fred_*^{\pm 1}$}
We regard the sets~$\Fred_*$ and~$\Fred_*^{\pm 1}$ as topological
spaces endowed with norm topology.

\begin{theorem}
  \label{RetractTheorem}
  The space~$\Fred_*^{\pm 1}$ is a deformation retract of the
  space~$\Fred_*$ i.e., there is a continuous mapping~$r : [0, 1]\times
  \Fred_* \mapsto \Fred_*$ such that
  \begin{enumerate}
  \item $r(0, F) = F$, $F \in \Fred_*$;
  \item $r(1, F) \in \Fred_*^{\pm 1}$, $F \in \Fred_*$;
  \item $r(1, F) = F$, $F \in \Fred_*^{\pm 1}$.
  \end{enumerate}
\end{theorem}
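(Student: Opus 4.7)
My plan is to construct the retraction~$r$ explicitly by functional calculus, following the classical Atiyah--Singer idea but with M.~Breuer's characterization of $\tau$-Fredholm operators (cited in Section~3) playing the role of invertibility in the ordinary Calkin algebra. Specifically, for each $\delta > 0$ fix a continuous, odd, non-decreasing truncation function $\phi_\delta : \Rl \to [-1, 1]$ with $\phi_\delta(x) = \mathrm{sgn}(x)$ for $|x| \geq \delta$ and $\phi_1(x) = x$ on $[-1, 1]$; for concreteness, one may take $\phi_\delta(x) = \max(-1, \min(1, x/\delta))$ (a $C^2$ smoothing is optional). I then set
$$r(t, F) := (1-t)\, F + t\, \phi_{\delta_F}(F), \qquad t \in [0, 1],\ F \in \Fred_*.$$

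Next I would verify the three required properties in order. Property~(i) is immediate. For~(ii), $\|\phi_{\delta_F}(F)\| \leq 1$, and by Breuer's characterization $\sigma(\pi(F)) \subseteq \{|x| \geq \delta_F\}$, hence $\sigma(\pi(\phi_{\delta_F}(F))) = \phi_{\delta_F}(\sigma(\pi(F))) \subseteq \{-1, +1\}$, so $\delta_{r(1, F)} = 1$ and $r(1, F) \in \Fred_*^{\pm 1}$. For~(iii), if $F \in \Fred_*^{\pm 1}$ then $\delta_F = 1$ and $\sigma(F) \subseteq [-1, 1]$, where $\phi_1$ is the identity; thus $\phi_1(F) = F$ and in fact $r(t, F) = F$ for every~$t$ (yielding a strong deformation retract). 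One still has to check that $r(t, F) \in \Fred_*$ for intermediate $t$: applying $\pi$ gives $\pi(r(t, F)) = \psi_t(\pi(F))$ where $\psi_t(x) = (1-t)\, x + t\, \phi_{\delta_F}(x)$; on $\{|x| \geq \delta_F\}$ one has $|\psi_t(x)| \geq (1 - t)\delta_F + t \geq \min(\delta_F, 1) > 0$, so $\psi_t(\pi(F))$ is invertible in the Calkin algebra, and $r(t, F)$ is $\tau$-Fredholm.

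The remaining and most delicate point is joint norm-continuity of $r : [0, 1] \times \Fred_* \to \Fred_*$. Here I would combine three ingredients: (a)~the continuity of $F \mapsto \delta_F$ on $\Fred_*$ (noted in Section~3), which ensures that $\delta_F$ is bounded away from $0$ on a neighbourhood of any $F_0$; (b)~the joint continuity of $(\delta, x) \mapsto \phi_\delta(x)$ on $(0, \infty) \times \Rl$, which implies that $\phi_{\delta_F} \to \phi_{\delta_{F_0}}$ uniformly on compact subsets of $\Rl$ as $F \to F_0$; and~(c)~the standard fact that for self-adjoint elements of~$\aM$ the continuous functional calculus is jointly norm-continuous in the operator (on a bounded set of self-adjoints) and in the function (equipped with the $C(K)$-norm for any compact~$K$ containing the relevant spectra). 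Putting (a)--(c) together yields norm-continuity of $F \mapsto \phi_{\delta_F}(F)$, and hence of~$r$. I do not anticipate a serious obstacle: the whole argument is the natural transcription of the classical retraction to the semifinite setting, with Breuer's characterization substituted for Atkinson's theorem.
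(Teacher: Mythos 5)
Your proof is correct, but it takes a genuinely different route from the paper's. The paper factors the retraction through an intermediate space $\Fred_*'$ (all $F\in\Fred_*$ with $E^F(-\delta,\delta)$ $\tau$-finite for every $\delta<1$, equivalently $\delta_F\geq 1$): a first homotopy rescales $F$ by $\|\pi(F)^{-1}\|$, and a second homotopy moves along the straight line from $F$ to $\chi(F)$, where $\chi$ is a \emph{fixed} truncation function; the verification that $\chi(F)\in\Fred_*^{\pm 1}$ is done by a direct computation showing that $1-\chi(F)^2$ is $\tau$-compact, via a decomposition of $E^F(-1,1)$ into a sum of $\tau$-finite projections. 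You instead compress everything into a single straight-line homotopy from $F$ to $\phi_{\delta_F}(F)$, using an \emph{operator-dependent} truncation function $\phi_{\delta_F}$, and you establish the endpoint property $\delta_{r(1,F)}=1$ by a spectral-mapping argument in the Calkin algebra rather than by the paper's decomposition. Note that the endpoint maps actually coincide (since $\phi_{\delta_F}(F)=\chi(F/\delta_F)=\chi(F\|\pi(F)^{-1}\|)$), so you have simply replaced the paper's two concatenated homotopies with a single straight-line one. What the paper's two-step approach buys is that the functional calculus continuity is only ever invoked for a single fixed function $\chi$, which is an immediate citation (Bhatia). What your approach buys is economy and a cleaner spectral-mapping verification of the endpoint, at the cost of having to verify joint continuity of $F\mapsto\phi_{\delta_F}(F)$; your argument for this (uniform convergence of $\phi_{\delta_F}$ on compacts plus continuity of the functional calculus for each fixed continuous function) is sound. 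Both proofs yield a strong deformation retract. Your proposal is complete and, if anything, slightly tidier than the paper's.
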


\begin{proof}
 Recall that $\Comp$ is the two-sided ideal of all $\tau$-compact operators
  of~$\aM$ and  
  $\pi$ is the homomorphism $$ \pi : \aM \mapsto \aM / \Comp. $$ 
  Also recall that if~$F$ is a self-adjoint $\tau$-Fredholm operator
  and~$\delta_F = \left\| \pi(F)^{-1} \right\|^{-1}$, then, for every~$0
  \leq \delta < \delta_F$, the spectral projection~$E^{F}(-\delta,
  \delta)$ is $\tau$-finite (see Lemma~\ref{FredholmProperty}) i.e., $$
  \tau \left( E^F(-\delta, \delta) \right) < + \infty,\ \ 0 \leq \delta
  < \delta_F. $$


  Consider the intermediate space~$\Fred_*^{\pm 1} \subseteq \Fred_*'
  \subseteq \Fred_*$ of all $\tau$-Fredholm operators~$F$ such that the
  projection~$E^F(-\delta, \delta)$ is $\tau$-finite, for every~$0\leq
  \delta < 1$.  Clearly, it is sufficient to show that~$\Fred_*'$ is a
  deformation retract of~$\Fred_*$ and that~$\Fred_*^{\pm 1}$ is a
  deformation retract of~$\Fred_*'$.

  Observing that the function~$F \mapsto \left\| \pi(F)^{-1} \right\|$
  is continuous, we see that the deformation retract between~$\Fred_*$
  and~$\Fred_*'$ is given by the mapping $$ r_1(t, F) = F \, \left(1 - t +
    t \, \left\| \pi(F)^{-1} \right\|\right)^{-1} \in \Fred_*,\ \ 0 \leq t
  \leq 1. $$

  To construct the deformation retract between~$\Fred_*'$
  and~$\Fred_*^{\pm 1}$, let us consider the continuous
  function~$\chi(t)$ which is constantly~$1$ for~$t \geq 1$,
  constantly~$-1$ for~$t \leq -1$ and linear for~$-1 \leq t \leq 1$.
  The function~$\chi$ is given by $$ \chi(t) = \frac 12 \left| t + 1
  \right| - \frac 12 \left| t - 1 \right|. $$ Observe that the
  mapping~$F \mapsto \chi(F)$ is continuous in the norm topology
  (see~\cite[Theorem~X.2.1]{Bhatia1997}).  Observe also that~$\chi(F)
  \in \Fred_*^{\pm 1}$, for every~$F \in \Fred_*'$.  Indeed, let us
  fix~$F \in \Fred_*'$ and let~$F_1 = \chi(F)$.  Clearly,~$\left\| F_1
  \right\| \leq 1$.  To see that~$1 - F_1^2$ is $\tau$-compact, consider the
  points~$\delta_n = 1 - \frac 1n$, $n = 1, 2, \ldots\,$, and the
  projections $$ E_n = E^F (-\delta_{n+1}, - \delta_n] + E^F [\delta_n,
  \delta_{n+1}). $$ Observe that, since~$F \in \Fred_*'$, the
  projection~$E_n$ is $\tau$-finite, for every~$n = 1, 2, \ldots\,$.
  Noting that $$ E^F (-1, 1) = E^{F_1} (-1, 1) \ \ \text{and}\ \ (1 -
  F_1^2) E^{F_1} \left\{ \pm 1 \right\} = 0, $$ we obtain that
  \begin{equation*}
    1 - F_1^2 = (1 - F_1^2) \, E^F (-1, 1) = (1 - F_1^2) \, \sum_{n =
      1}^\infty E_n = \sum_{n = 1}^\infty (1 - F_1^2) \, E_n \leq
    \sum_{n = 1}^\infty \frac 2n \, E_n.
  \end{equation*}
  The latter means that the operator~$1 - F_1^2$ is $\tau$-compact.
  Thus, we may define the deformation retract between~$\Fred_*'$
  and~$\Fred_*^{\pm 1}$ by setting $$ r_2(t, F) = (1 - t)\, F + t\,
  \chi(F) \in \Fred_*',\ \ 0 \leq t \leq 1. $$ The theorem is proved.
\end{proof}

\def\cprime{$'$}


\begin{thebibliography}{10}

\bibitem{APS1}
M.~F. Atiyah, V.~K. Patodi, and I.~M. Singer.
\newblock Spectral asymmetry and {R}iemannian geometry. {I}.
\newblock {\em Math. Proc. Cambridge Philos. Soc.}, 77:43--69, 1975.

\bibitem{APS3}
M.~F. Atiyah, V.~K. Patodi, and I.~M. Singer.
\newblock Spectral asymmetry and {R}iemannian geometry. {III}.
\newblock {\em Math. Proc. Cambridge Philos. Soc.}, 79(1):71--99, 1976.

\bibitem{AS}
M.~F. Atiyah and I.~M. Singer.
\newblock Index theory for skew-adjoint {F}redholm operators.
\newblock {\em Inst. Hautes \'Etudes Sci. Publ. Math.}, (37):5--26, 1969.

\bibitem{ACDS}{N.~A. Azamov, A.~L. Carey, P.~G. Dodds, 
F.~A. Sukochev}
{Operator integrals, spectral shift and spectral flow}
{to appear in 
{\em Canad. J. Math}, arXiv:math/0703442}
\bibitem{ACS}
N.~A. Azamov, A.~L. Carey, and F.~A. Sukochev.
\newblock The spectral shift function and spectral flow.
\newblock {\em Commun. Math. Phys.}, 276:51--91, 2007.

\bibitem{BCPRSW}
M.-T. Benameur, A.~L. Carey, J.~Phillips, A.~Rennie, F.~A. Sukochev, and K.~P.
  Wojciechowski.
\newblock An analytic approach to spectral flow in von {N}eumann algebras.
\newblock In {\em Analysis, geometry and topology of elliptic operators}, pages
  297--352. World Sci. Publ., Hackensack, NJ, 2006.

\bibitem{BergLofstrom} J.~Bergh and J.~L\"ofstr\"om,
  \emph{Interpolation spaces}, Springer-Verlag, 1976.


\bibitem{Bhatia1997}
R.~Bhatia.
\newblock {\em Matrix analysis}, volume 169 of {\em Graduate Texts in
  Mathematics}.
\newblock Springer-Verlag, New York, 1997.

\bibitem{BreuerII}
M.~Breuer.
\newblock Fredholm theories in von {N}eumann algebras. {II}.
\newblock {\em Math. Ann.}, 180:313--325, 1969.

\bibitem{CarPhi1998-MR1638603}
A.~L. Carey and J.~Phillips.
\newblock Unbounded {F}redholm modules and spectral flow.
\newblock {\em Canad. J. Math.}, 50(4):673--718, 1998.

\bibitem{CarPhi2004-MR2053481}
A.~L. Carey and J.~Phillips.
\newblock Spectral flow in {F}redholm modules, eta invariants and the {JLO}
  cocycle.
\newblock {\em $K$-Theory}, 31(2):135--194, 2004.

\bibitem{CPRS2}
A.~L. Carey, J.~Phillips, A.~Rennie, and F.~A. Sukochev.
\newblock The local index formula in semifinite von {N}eumann algebras. {I}.
  {S}pectral flow.
\newblock {\em Adv. Math.}, 202(2):451--516, 2006.

\bibitem{CPRS3}
A.~L. Carey, J.~Phillips, A.~Rennie, and F.~A. Sukochev.
\newblock The local index formula in semifinite von {N}eumann algebras. {II}.
  {T}he even case.
\newblock {\em Adv. Math.}, 202(2):517--554, 2006.

\bibitem{CPS}
A.~L. Carey, J.~Phillips, and F.~A. Sukochev.
\newblock On unbounded {$p$}-summable {F}redholm modules.
\newblock {\em Adv. Math.}, 151(2):140--163, 2000.

\bibitem{SukChi1990-MR1080637}
V.~I. Chilin and F.~A. Sukochev.
\newblock Symmetric spaces over semifinite von {N}eumann algebras.
\newblock {\em Dokl. Akad. Nauk SSSR}, 313(4):811--815, 1990.

\bibitem{CS1994}
V.~I. Chilin and F.~A. Sukochev.
\newblock Weak convergence in non-commutative symmetric spaces.
\newblock {\em J. Operator Theory}, 31(1):35--65, 1994.
 \bibitem{C} A. Connes,
\newblock         {\em Noncommutative Geometry},
     \newblock    Academic Press, 1994



\bibitem{CM} A. Connes, H. Moscovici,
\newblock {The Local Index Formula in Noncommutative Geometry}, 
\newblock {\em Geometry and Funct. Analysis} {\bf 5} (1995) 174-243
\bibitem{PS-RFlow}
B.~de~Pagter and F.~Sukochev.
\newblock Commutator estimates and {$\Bbb R$}-flows in non-commutative operator
  spaces.
\newblock {\em Proc. Edinb. Math. Soc. (2)}, 50(2):293--324, 2007.

\bibitem{PS-DiffP}
B.~de~Pagter and F.~A. Sukochev.
\newblock Differentiation of operator functions in non-commutative {$L\sb
  p$}-spaces.
\newblock {\em J. Funct. Anal.}, 212(1):28--75, 2004.

\bibitem{PSW-DOI}
B.~de~Pagter, F.~A. Sukochev, and H.~Witvliet.
\newblock Double operator integrals.
\newblock {\em J. Funct. Anal.}, 192(1):52--111, 2002.

\bibitem{DDP1992}
P.~G. Dodds, T.~K. Dodds, and B.~de~Pagter.
\newblock Fully symmetric operator spaces.
\newblock {\em Integral Equations Operator Theory}, 15:942--972, 1992.

\bibitem{DDP1993}
P.~G. Dodds, T.~K. Dodds, and B.~de~Pagter.
\newblock Non-commutative {K}\"othe duality.
\newblock {\em Trans. Amer. Math. Soc.}, 339:717--750, 1993.

\bibitem{DHK}
R.~G. Douglas, S.~Hurder, and J.~Kaminker.
\newblock Cyclic cocycles, renormalization and eta-invariants.
\newblock {\em Invent. Math.}, 103(1):101--179, 1991.

\bibitem{FK1986}
T.~Fack and H.~Kosaki.
\newblock Generalized {$s$}-numbers of {$\tau$}-measurable operators.
\newblock {\em Pacific J. Math.}, 123(2):269--300, 1986.

\bibitem{G}
E.~Getzler.
\newblock The odd {C}hern character in cyclic homology and spectral flow.
\newblock {\em Topology}, 32(3):489--507, 1993.

\bibitem{H}
S.~Hurder.
\newblock Eta invariants and the odd index theorem for coverings.
\newblock In {\em Geometric and topological invariants of elliptic operators
  (Brunswick, ME, 1988)}, volume 105 of {\em Contemp. Math.}, pages 47--82.
  Amer. Math. Soc., Providence, RI, 1990.

\bibitem{Kam}
J.~Kaminker.
\newblock Operator algebraic invariants for elliptic operators.
\newblock In {\em Operator theory: operator algebras and applications, Part 1
  (Durham, NH, 1988)}, volume~51 of {\em Proc. Sympos. Pure Math.}, pages
  307--314. Amer. Math. Soc., Providence, RI, 1990.

\bibitem{Lesch}
M.~Lesch.
\newblock The uniqueness of the spectral flow on spaces of unbounded self-adjoint Fredholm operators.
\newblock In {\em Spectral geometry of manifolds with boundary and decomposition of manifolds}, B. Boo§-Bavnbek, G. Grubb, and K. Wojciechowski, Eds., vol. 366 of {\em Cont. Math.} pages  193-224
Amer. Math. Soc., Providence RI, 2005
math.FA/0401411.

\bibitem{M}
V.~Mathai.
\newblock Spectral flow, eta invariants, and von {N}eumann algebras.
\newblock {\em J. Funct. Anal.}, 109(2):442--456, 1992.

\bibitem{P1}
V.~S. Perera.
\newblock Real valued spectral flow in a type {${\rm II}\sb \infty$} factor.
\newblock Ph.D.~Thesis, IUPUI, 1993.

\bibitem{P2}
V.~S. Perera.
\newblock Real valued spectral flow in a type {${\rm II}\sb \infty$} factor.
\newblock {\em Houston J. Math.}, 25(1):55--66, 1999.

\bibitem{Ph}
J.~Phillips.
\newblock Self-adjoint {F}redholm operators and spectral flow.
\newblock {\em Canad. Math. Bull.}, 39(4):460--467, 1996.

\bibitem{Phillips-SF2-1997}
J.~Phillips.
\newblock Spectral flow in type {I} and {II} factors---a new approach.
\newblock In {\em Cyclic cohomology and noncommutative geometry (Waterloo, ON,
  1995)}, volume~17 of {\em Fields Inst. Commun.}, pages 137--153. Amer. Math.
  Soc., Providence, RI, 1997.

\bibitem{PotapovThesis}
D.~S. Potapov.
\newblock Lipschitz and commutator estimates, a unified approach.
\newblock Ph.D.~Thesis, Flinders Univ., SA, 2007.

\bibitem{PoSu}
D.~S. Potapov and F.~A. Sukochev.
\newblock Lipschitz and commutator estimates in symmetric operator spaces.
\newblock {\em J. Operator Theory}, 59(1):211--234, 2008.

\bibitem{PoSuNGapps}
D.~S. Potapov and F.~A. Sukochev.
\newblock {U}nbounded {F}redholm modules and double operator integrals.
\newblock to appear in Journal f\"ur die Reine und Angewandte Mathematik, 2008.
\bibitem {R}
M. Rosenblum and J. Rovnyak,
\newblock {\em  Hardy classes and operator theory. }
\newblock Oxford Mathematical Monographs. Oxford Science Publications.. The Clarendon Press, Oxford University Press, New York, 1985

\bibitem{Si}
I.~M. Singer.
\newblock Eigenvalues of the {L}aplacian and invariants of manifolds.
\newblock In {\em Proceedings of the International Congress of Mathematicians
  (Vancouver, B. C., 1974), Vol. 1}, pages 187--200. Canad. Math. Congress,
  Montreal, Que., 1975.

\bibitem{Sukochev2000-MR1767406}
F.~A. Sukochev.
\newblock Operator estimates for {F}redholm modules.
\newblock {\em Canad. J. Math.}, 52(4):849--896, 2000.

\bibitem{Takesaki2002-MR1873025}
M.~Takesaki.
\newblock {\em Theory of operator algebras. {I}}, volume 124 of {\em
  Encyclopaedia of Mathematical Sciences}.
\newblock Springer-Verlag, Berlin, 2002.
\newblock Reprint of the first (1979) edition, Operator Algebras and
  Non-commutative Geometry, 5.

\bibitem{Wahl}
C.~Wahl.
\newblock Spectral flow and winding number in von {N}eumann algebras.
\newblock preprint, arXiv:arch-ive/0608030.

\end{thebibliography}
\end{document}